\numberwithin{equation}{section} 
\theoremstyle{plain}
\newtheorem{theorem}{Theorem}[section]
\newtheorem{proposition}[theorem]{Proposition}
\newtheorem{lemma}[theorem]{Lemma}
\newtheorem{corollary}[theorem]{Corollary}
\theoremstyle{definition}
\newtheorem{definition}[theorem]{Definition}
\theoremstyle{remark}
\newtheorem{remark}[theorem]{Remark}
\newcommand{\id}{\operatorname{id}}
\newcommand{\Cs}{{$C^*$-algebra}}
\newcommand{\N}{\mathbb N}
\newcommand{\Z}{\mathbb Z}
\newcommand{\R}{\mathbb R}
\newcommand{\C}{\mathbb C}
\newcommand{\B}{\mathbb B}
\newcommand{\E}{\mathbb E}
\newcommand{\cP}{\mathcal P}
\newcommand{\cO}{\mathcal O}
\newcommand{\supp}{\operatorname{supp}}
\newcommand \listoftodos{\@starttoc{tdo}}
 \newcommand\l@todo[2]
\noindent \textit{#2}, \parbox{10cm}{#1}\par} \makeatother
\author{Mikael R\o rdam}
\address{Department of Mathematical Sciences, University of Copenhagen, Universitets\-parken 5, DK-2100, Copenhagen \O, Denmark} \email{rordam@math.ku.dk}
\author{Adam Sierakowski}
\address{The Fields Institute for Research in Mathematical Sciences,
  222 College Street, Second Floor, Toronto, M5T 3J1 Canada}
\address{Department of Mathematics and Statistics, York University, 4700
  Keele Street, Toronto, M3J 1P3 Canada}
\email{asierako@fields.utoronto.ca, adamdk@yorku.ca}
\date{\today} 
\title{Purely infinite $C^*$-algebras arising from crossed products}
\thanks{M.R.\ was supported by grants from the Danish National
  Research Foundation and the Danish Natural Science
  Research Council (FNU)}
\thanks{A.S.\ was supported by grants of Professors G.A.Elliott and
  A.S.Toms and from the Visitor Fund at the Department of Mathematics
  and Statistics, York University} 
\begin{document}
\maketitle

\begin{abstract}
We study conditions that will ensure that a
crossed product of a \Cs{} by a discrete exact group is purely
infinite (simple or non-simple). We are particularly interested in the
case of a discrete non-amenable exact group acting on a commutative
\Cs, where our sufficient conditions can be phrased in terms of
paradoxicality of subsets of the spectrum of the abelian \Cs. 

As an application of our results we show that every discrete countable
non-amenable exact group admits a free amenable minimal action on the
Cantor set such that the corresponding crossed product \Cs{} is a
Kirchberg algebra in the UCT class.
\end{abstract}

\section{Introduction}

Operator algebras and dynamical systems are closely related. Dynamical
systems give rise to operator algebras and they can be studied in
terms of these algebras and their invariant. It is a challenging task to
decipher information about the dynamical system from its corresponding
operator algebra and vice versa. 

One motivating example for this article is the
particular dynamical
system where an arbitrary (discrete) group $G$ acts on
$\ell^\infty(G)$ by left-translation. The associated (reduced) crossed
product \Cs{} $\ell^\infty(G) \rtimes_r G$, also known as the
\emph{Roe algebra}, plays an important role in the study of $K$-theory
of groups, but it also harbors information related to paradoxical sets
and Banach-Tarski's paradox. For example,
if a subset $E$ of $G$ is $G$-paradoxical, then $1_E \in
\ell^\infty(G)$ is a properly infinite projection in the Roe
algebra. (We show in this paper that the converse also holds.)
In particular, the Roe algebra itself is properly infinite if
and only if $G$ is $G$-paradoxical which happens if and only if
$G$ is non-amenable. 

The special class of \Cs s, now called \emph{Kirchberg algebras}, that
are purely infinite simple separable and nuclear, are of particular
interest because of their classification (by $K$- or $KK$-theory)
obtained by Kirchberg and Phillips in the mid 1990's. Many
of the naturally occurring examples of Kirchberg algebras arise from
dynamical systems. The Cuntz algebras $\cO_n$, for example,
are stably isomorphic to
the crossed product of a stabilized UHF-algebra with an action of the
group of integers that scales the trace. 

Several classes of examples of Kirchberg algebras arising as crossed
products of abelian \Cs s (often times with spectrum the Cantor set)
by hyperbolic groups have appeared in the literature. Prompted by
Choi's embedding of $C^*_r(\Z_2 * 
\Z_3)$ into $\cO_2$, Archbold and Kumjian (independently) proved that
there is an action of $\Z_2 * \Z_3$ on the Cantor set so that the
corresponding crossed product \Cs{} is isomorphic to $\cO_2$. 

It is well-known that if $G$ acts topologically 
freely and minimally on a compact
Hausdorff space $X$, then the crossed product $C(X) \rtimes_r G$ is
simple. If $G$ acts amenably on a (general) \Cs{} $A$, then $(A,G)$ is
automatically
\emph{regular} (meaning that the full and the reduced crossed products
are the same), and $A \rtimes G$ is nuclear if and only
if $A$ is nuclear, see \cite[Theorem 4.3.4]{BroOza}. Conversely, if
$A$ is abelian and unital and $A \rtimes_r G$ is nuclear, then the
action of $G$ on $A$ is amenable, see \cite[Theorem 4.4.3]{BroOza}.
For abelian \Cs s $A$, the full crossed
product $A \rtimes G$ 
is simple if and only if the action is minimal, topologically
free, and regular, \cite[Corollary, p.\ 124]{ArcSpi}. These results
can be reformulated as follows,
assuming $A$ to be unital and abelian: the reduced crossed product $A
\rtimes_r G$ is simple \emph{and} nuclear if and only if the action is
minimal, topologically free, and amenable. See \cite{Del3, Del2,
  ArcSpi,BroOza} for details. 

There are several
known partial results containing sufficient conditions, in terms of the
geometry of the dynamical system, for the crossed
product to be purely infinite (and hence a Kirchberg algebra, provided
that the conditions for nuclearity, separability and simplicity are
satisfied). Laca and Spielberg showed in \cite{LacaSpi:purelyinf} that
pure infiniteness of the crossed product is ensured by
requiring that the action is a \emph{strong boundary action}, meaning
that $X$ is infinite and that any two non-empty open subsets of $X$ can 
be translated by groups elements to cover the entire space $X$. 

Jolissaint and Robertson generalized this result and showed that it is 
enough to require the action is \emph{$n$-filling}, a similar 
property but with $n$ subsets instead of two
subsets of $X$. Their results also provide sufficient conditions
for pure infiniteness of crossed products $A \rtimes_r G$ when $A$ is
non-abelian, cf.\ \cite[Theorem 1.2]{Guyan:pi}. 
For other results on when crossed product 
\Cs s are Kirchberg  algebras we refer to \cite{Del, Kis:simple,
  LacaSpi:purelyinf}. 
 
In parts guided by the motivating example of the Roe algebra we shall
in this paper be interested in conditions on a dynamical system
$(A,G)$ that will ensure that the corresponding reduced crossed
product \Cs{} $A \rtimes_r G$ is purely infinite, but not necessarily
simple. We refer to \cite{KirRor} for more about non-simple purely
infinite \Cs s.

We show that the crossed product $A\rtimes_r G$ is purely infinite if
and only if every non-zero positive 
element in $A$ is properly infinite in 
$A\rtimes_r G$ provided that $G$ is discrete and exact, $A$
is separable and has the so-called ideal property (IP) (projections
separate ideals), and the action of $G$ on 
$\widehat{A}$ is essentially free (Theorem \ref{main.theorem}). In
the case where $A$ is the continuous functions on the Cantor set, we
show that $A\rtimes_r G$ is purely infinite if
and only if every non-zero projection in $A$ is properly infinite in 
$A\rtimes_r G$, here assuming that $G$ is discrete and exact and that 
the action of $G$ on
$\widehat{A}$ is essentially free, cf.\ Theorem
\ref{main.theorem.abelian}. 

In Section~\ref{sec:type} we consider dynamical systems $(C(X),G)$, where $X$
is totally disconnected, and we show that if a clopen subset $E$ of
$X$ is $G$-paradoxical (in a way that respects clopen sets), then $1_E$ is
a properly infinite projection in the crossed product $C(X) \rtimes_r
G$. Partial converse results are also obtained. In the case of the Roe
algebra $\ell^\infty(G) \rtimes_r G$ the situation is more clear: If $E$
is a subset of $G$, then $1_E$ is properly infinite in $\ell^\infty(G)
\rtimes_r G$ if and only if $E$ is $G$-paradoxical, which again happens if
and only if there is no (unbounded) trace on $\ell^\infty(G)
\rtimes_r G$ which is non-zero and finite on $1_E$.
In Section~\ref{sec:Roe} we
use this result to show that each countable, discrete, non-amenable,
exact group admits an action on the Cantor set such that the crossed
product is a Kirchberg algebra in the UCT class.

We thank Claire Anantharaman-Delaroche, 
George Elliott, Uffe Haagerup, Nigel Higson, Eberhard
Kirchberg, and Guoliang Yu for valuable discussions on the topics of
this paper.

\section{Notations and a preliminary result} \label{sec:notations}

Given a \Cs{} dynamical system $(A,G)$ with $G$ a discrete
group, let 
$A \rtimes_{r} G$ and $A \rtimes G $ denote the reduced
and the full crossed product \Cs s, respectively. In general we have a
surjective, and not necessarily injective, $^*$-homomorphism $A
\rtimes G \to A \rtimes_{r} G$. If $G$ is amenable,
or if $G$ acts amenably on $A$, then this $^*$-homomorphism is
injective, whence the reduced and the full crossed products are equal.
In this case the dynamical system is said to be regular. 

Let $C_c(G,A)$
be the common subalgebra of both crossed products consisting of finite
sums $\sum_{t\in G} a_tu_t$, where $a_t \in A$ (only finitely many
non-zero), and $t \mapsto u_t$, $t \in G$, is the unitary
representation of $G$ that implements the action of $G$ on
$A$. (If $A$ is unital, then each $u_t$ belongs to the crossed product,
and in general $u_t$  belongs to the multiplier algebra of the crossed
product.) We
suppress the canonical inclusion map $A\to A\rtimes_r G $ and
view $A$ as being a sub-\Cs{} of $A \rtimes_r G$. 

We have a (faithful) 
conditional expectation $E \colon A \rtimes_r G \to A$ which
on $C_c(G,A)$ is given by $\sum_{t\in G} a_t u_t \mapsto a_e$, where
$e \in G$ denotes the neutral element. 

For every $G$-invariant ideal $I$ in $A$,
the natural maps in the short exact sequence
\begin{eqnarray} \label{diagram0}
\xymatrix{
0 \ar[r] & I \ar[r]^-{\iota} & A \ar[r]^{\rho} & A/I \ar[r] & 0
}
\end{eqnarray}
extend canonically to maps at the level of reduced crossed
products, giving rise to the possibly non-exact sequence 
\begin{eqnarray}
\label{diagram}
	\xymatrix{0 \ar[r] & I\rtimes_r G  \ar[r]^-{\iota \rtimes_r
            \id } & A\rtimes_r G  \ar[r]^{\rho \rtimes_r \id } &
          A/I\rtimes_r G \ar[r] & 0 
}
\end{eqnarray}
cf.\ \cite[Remark 7.14]{Will:cross}. In general, the kernel of $\rho
\rtimes_r \id$, contains---but need not be contained in---the image of
$\iota \rtimes_r \id$. The sequence \eqref{diagram} is exact precisely
when $\ker(\rho\rtimes_r \id ) \subseteq {\mathrm{im}}(I\rtimes_r
G)$. By the definition of Kirchberg and Wassermann, \cite{KirWas},  
$G$ is exact if and only if \eqref{diagram} is exact for all exact
sequences \eqref{diagram0} of \Cs s with compatible $G$-actions.

The action of
$G$ on $A$ is said to be \emph{exact} if every $G$-invariant ideal in $A$
induces a short exact sequence at the level of reduced crossed
products, i.e.\ if \eqref{diagram} is exact for all $G$-invariant
closed two-sided ideals in $A$ (i.e., whenever \eqref{diagram0}
is exact with $A$ fixed), cf.\ \cite[Definition 1.2]{Sier}. Recall that $A$
is said to \emph{separate the ideals in $A\rtimes_r G$} 
if the (surjective) map
$J\mapsto J\cap A$, from the ideals in $A\rtimes_r G $ into the
$G$-invariant ideals in $A$, is injective. If the \Cs{} $A$
separates the ideals in 
$A\rtimes_r G$, then the action of $G$ on $A$ must be exact,
cf.\ \cite[Theorem 1.10]{Sier}. 

Recall that the action of $G$ on $\widehat{A}$ is said to be
\emph{essentially free} 
provided that for every closed $G$-invariant subset $Y\subseteq
\widehat{A}$, the subset of points in $Y$ with trivial isotropy is
dense in $Y$, cf.\ \cite{Ren:fixed}. It was shown in 
\cite{Sier} that $A$ separates the ideals in $A\rtimes_r G$ if $G$ is
a discrete group, the action of $G$ on $A$ is exact, and if the action
of $G$ on $\widehat{A}$ is essentially free. In particular, if a
discrete exact group $G$ acts (essentially) 
freely on a compact Hausdorff space $X$,
then $C(X)$ separates ideals in $C(X) \rtimes_r G$. 

We remind the reader of some basic definitions involving Cuntz
comparison and infiniteness of positive elements in a \Cs.
Let $a,b$ be positive elements in a \Cs{}
$A$. Write $a\precsim b$ if there exists a sequence $(r_n)$ in $A$
such that $r_n^*br_n\to a$. More generally for $a\in M_n(A)^+$
and $b\in M_m(A)^+$ write $a\precsim b$ if there exists a sequence
$(r_n)$ in $M_{m,n}(A)$ with $r_n^*br_n\to a$. For
$a\in M_n(A)$ and $b\in M_m(A)$ let $a\oplus b$ denote the element
$\textrm{diag}(a,b)\in M_{n+m}(A)$.  

A positive element $a$ in a \Cs{} $A$ is said to be \emph{infinite} if
there exists a non-zero positive element $b$ in $A$ such that $a\oplus
b\precsim a$. If $a$ is non-zero and if $a\oplus a\precsim a$, then
$a$ is said to be \emph{properly infinite}. This extends the usual
concepts of infinite and properly infinite projections, cf.\ \cite[p.\
642--643]{KirRor}.  

A \Cs{} $A$ is \emph{purely infinite} if there are no
characters on $A$ and if for every pair of positive elements $a,b$ in
$A$ such that $b$ belongs to the ideal in $A$ generated by $a$, 
one has $b\precsim a$.
Equivalently, a \Cs{} $A$ is purely infinite if every non-zero
positive element $a$ in $A$ is properly infinite, cf.\ \cite[Theorem
4.16]{KirRor}. 

We state below a general, but also rather restrictive, condition that
implies pure infiniteness of a crossed product. 

\begin{proposition}
\label{prop1}
Let $(A,G)$ be a C*-dynamical system with $G$ discrete. Suppose that
$A$ separates the ideals in $A\rtimes_r G$. Then $A\rtimes_r G$ is
purely infinite if and only if all non-zero positive elements in $A$ are
properly infinite in $A\rtimes_r G$ and
$E(a)\precsim a$ for all positive elements $a$ in $A\rtimes_r G$. 
\end{proposition}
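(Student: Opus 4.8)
The plan is to prove both implications.

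For the "only if" direction: suppose $A \rtimes_r G$ is purely infinite. By the characterization recalled in the excerpt (Kirchberg–Rørdam), every non-zero positive element of $A \rtimes_r G$ is properly infinite; in particular this applies to non-zero positive elements of $A$, viewed inside the crossed product. For the relation $E(a) \precsim a$: when $a \in (A \rtimes_r G)^+$ is non-zero, $a$ is properly infinite and hence belongs to a purely infinite, so in particular the ideal generated by $a$ contains $E(a)$ once we check $E(a)$ lies in that ideal. So the key point here is: \emph{$E(a)$ lies in the ideal of $A \rtimes_r G$ generated by $a$}. This should follow from faithfulness of $E$: if $J$ is the closed ideal generated by $a$, then $E$ restricted to $J$ is a faithful conditional expectation onto $J \cap A$, and $a \in J$ forces... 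Actually the cleanest route: the ideal generated by $a$ in $A\rtimes_r G$ corresponds (via $J \mapsto J \cap A$) to some $G$-invariant ideal $I$ of $A$; one shows $E(a) \in I \subseteq J$ by noting $a \precsim$ elements supported near $E(a)$ — more precisely, a standard averaging/approximation argument (cf. the proof that $E$ is faithful) shows $E(a)$ lies in the hereditary subalgebra, hence the ideal, generated by $a$. Then pure infiniteness gives $E(a) \precsim a$. (If $a = 0$ the statement is trivial.)

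For the "if" direction: assume all non-zero positive elements of $A$ are properly infinite in $A \rtimes_r G$, and $E(b) \precsim b$ for all $b \in (A\rtimes_r G)^+$. I want to show every non-zero positive $b \in A\rtimes_r G$ is properly infinite. Given such a $b$, set $a := E(b) \in A^+$. First observe $a \neq 0$: indeed $b \neq 0$ and $E$ is faithful, so $E(b^*b) = E(b) \neq 0$ since $b$ is positive — wait, need $E(b) \ne 0$, which holds because $E(b) = 0$ with $b \geq 0$ and $E$ faithful would force $b = 0$. So $a = E(b)$ is a non-zero positive element of $A$, hence properly infinite in $A \rtimes_r G$: $a \oplus a \precsim a$. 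Now chain the Cuntz comparisons: $b \oplus b \precsim$ ... hmm, I need to go the other way. I have $a = E(b) \precsim b$ by hypothesis. I want $b \oplus b \precsim b$. This is where the hypothesis that $A$ \emph{separates ideals} enters decisively: separation of ideals means the ideal of $A \rtimes_r G$ generated by $b$ equals the ideal generated by $E(b) = a$ (both correspond to the same $G$-invariant ideal $I = \overline{AE(b)A}^{\,G}$ of $A$, since $E(b) \in \langle b \rangle \cap A$ and conversely $b \in \langle E(b)\rangle$ needs checking — or rather one shows $\langle b \rangle \cap A = \langle a \rangle \cap A$). Hence $b$ lies in the ideal generated by $a$, so $b \precsim a$ (using that $a$ is properly infinite, an ideal-generated element is Cuntz-below it — here I'd invoke that properly infinite positive elements $a$ have the property that any positive element in the ideal they generate is $\precsim a$; this is essentially the local characterization of pure infiniteness applied inside that ideal). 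Combining: $b \oplus b \precsim a \oplus a \precsim a \precsim b$, so $b$ is properly infinite. Therefore $A \rtimes_r G$ is purely infinite (it also has no characters, since a character would restrict to give a non-properly-infinite positive element, contradicting our hypothesis).

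The main obstacle I anticipate is establishing the two-way relationship between the ideal of $A \rtimes_r G$ generated by a positive element $b$ and the ideal generated by $E(b)$: concretely, that $b$ and $E(b)$ generate the same ideal. One inclusion, $E(b) \in \langle b \rangle$, is the faithfulness-type averaging argument mentioned above. The reverse, $b \in \langle E(b) \rangle$, is exactly where "$A$ separates the ideals in $A\rtimes_r G$" is used: the $G$-invariant ideal $\langle b\rangle \cap A$ contains $E(b)$, hence contains the $G$-invariant ideal $I$ generated by $E(b)$ in $A$; and $I = \langle E(b)\rangle_{A\rtimes_r G} \cap A$; since $J := \langle b \rangle$ and $\langle E(b)\rangle$ are ideals of $A\rtimes_r G$ with the same intersection with $A$ (one needs $\langle b \rangle \cap A = I$, i.e. not strictly larger, which again uses faithfulness of $E$ together with the fact that everything in $\langle b \rangle \cap A$ is built from $b$ via the expectation landing in $I$), separation of ideals forces $\langle b \rangle = \langle E(b) \rangle$. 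I'd need to be careful assembling these pieces, but each is a standard crossed-product fact; the role of the separation-of-ideals hypothesis is precisely to upgrade "same intersection with $A$" to "same ideal."
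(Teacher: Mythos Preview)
Your overall strategy matches the paper's: in both directions one shows that a positive $b\in A\rtimes_r G$ and $E(b)$ generate the same closed two-sided ideal, so that proper infiniteness of $E(b)$ yields $b \precsim E(b)$ (via \cite[Proposition 3.5~(ii)]{KirRor}), and together with the hypothesis $E(b)\precsim b$ one gets the chain $b\oplus b \precsim E(b)\oplus E(b)\precsim E(b)\precsim b$. That is exactly the paper's argument.

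Two places where your write-up needs correction. First, the claim that ``$E(a)$ lies in the hereditary sub-\Cs{} generated by $a$'' is false in general: take $a=(1+u)(1+u^*)=2+u+u^*$ in an irrational rotation algebra, so $E(a)=2\cdot 1$, yet $a$ is not invertible and hence $1\notin\overline{a(A\rtimes_r G)a}$. Only the \emph{ideal} containment $E(a)\in\langle a\rangle$ holds, and this is not an averaging/faithfulness argument but a genuine consequence of the separation-of-ideals hypothesis; the paper simply invokes \cite[Theorem~1.10]{Sier}. Second, in the ``if'' direction your attempt to prove $\langle b\rangle=\langle E(b)\rangle$ by matching intersections with $A$ is circuitous, and the inclusion $\langle b\rangle\cap A\subseteq\langle E(b)\rangle\cap A$ is not justified by faithfulness of $E$ alone---it is essentially the statement you are trying to prove. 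The paper's route is shorter: separation of ideals forces the action to be exact (again \cite[Theorem~1.10]{Sier}), and exactness directly gives $b\in\langle E(b)\rangle$ for every positive $b$ by \cite[Proposition~1.3]{Sier}. With these two citations in place, your argument becomes the paper's.
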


\begin{proof}
Suppose that $A\rtimes_r G$ is purely infinite.  
Fix a non-zero positive element $a$ in $A\rtimes_r G$. By
\cite[Theorem 1.10]{Sier} we have that
$E(a)$ belongs to the ideal in $A \rtimes_r G$ generated by $a$,
whence $E(a)\precsim a$ by proper infiniteness of $a$, cf.\ 
\cite[Theorem 4.16]{KirRor}.

Now suppose that every non-zero positive elements in $A$ is properly infinite
in $A\rtimes_r G$ and that $E(a)\precsim a$ for every positive element
$a$ in $A\rtimes_r G$. Since the action of $G$ on $A$ is exact it
follows that $a$ belongs to the ideal in $A \rtimes_r G$ generated by
$E(a)$ for all positive elements $a$ in $A\rtimes_r G$, cf.\
\cite[Proposition 1.3]{Sier}. This implies that $a\precsim
E(a)$ using that $E(a)$ is properly
infinite, cf.\ \cite[Proposition 3.5 (ii)]{KirRor}. 
By the relation
$$a\oplus a\precsim E(a)\oplus E(a)\precsim E(a)\precsim a$$
in $A\rtimes_r G$, we conclude that $a$ is properly infinite. This shows that 
$A\rtimes_r G$ is purely infinite, cf.\ \cite[Theorem 4.16]{KirRor}. 
\end{proof}

\begin{remark}
The condition in Proposition~\ref{prop1} that $E(a) \precsim a$ for all
positive elements $a$ in $A \rtimes_r G$ is automatically 
satisfied in many cases of
interest, for example in Theorem~\ref{main.theorem}.
\end{remark}

\begin{remark}
In the proof of Proposition \ref{prop1} we used exactness and proper infiniteness of positive elements in $A$ to conclude that  $a \precsim E(a)$ for all positive elements $a$ in $A \rtimes_r G$. It is shown in \cite[Proposition 1.3]{Sier} that $a$ belongs to the closed two-sided ideal in $A \rtimes_r G$ generated by $E(a)$ for all positive $a$ in $A \rtimes_r G$ if and only if the action of $G$ on $A$ is exact. It follows in particular, that $a \precsim E(a)$ does not hold for all $a$ when the action of $G$ on $A$ is not exact. 

Each (positive) element $a$ in $A \rtimes_r G$ can be written as a formal sum $\sum_{t \in G} a_t u_t$, where $t \mapsto u_t$ is the unitary representation of $G$ in (the multiplier algebra of) $A \rtimes_r G$, and where $a_t = E(au_t^*) \in A$. Upon writing $a = xx^*$ and using Lemma \ref{lm:norm-convergence} below, one can show that each term, $a_tu_t$, in this formal sum belongs to the hereditary sub-\Cs{} of $A \rtimes_r G$ generated by $E(a)$. It therefore follows that $a$ belongs to the hereditary sub-\Cs{} of $A \rtimes_r G$ generated by $E(a)$, and hence that $a \precsim E(a)$, whenever $a$ belongs to $C_c(G,A)$, or, slightly more generally, when the formal sum, $a= \sum_{t \in G} a_t u_t$, is norm-convergent.

It remains a curious open problem if the relation $a \precsim E(a)$ holds for all positive $a$ in $A \rtimes_r G$, when the action of $G$ on $A$ is exact.
\end{remark}

\section{Pure infiniteness of crossed products with the ideal property} 

We provide a necessary condition for a crossed product $A
\rtimes_r G$ to be purely infinite assuming that $A$ has the so-called
ideal property (projections in $A$ separate ideals in $A$), and under the
assumption that the action of $G$ on $A$ is exact and essentially
free. (The latter notion is defined in Section~\ref{sec:notations}.)

\begin{lemma}
\label{lemma.A.IP.cross.IP}
Let $(A,G)$ be a $C^*$-dynamical system with $G$ discrete. Assume that
the action of $G$ on $A$ is exact and that the action of $G$ on
$\widehat{A}$ is essentially free. If $A$ has the ideal
property, then so does $A\rtimes_r G$.   
\end{lemma}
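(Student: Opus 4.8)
The plan is to show that projections in $A$ separate the ideals in $A \rtimes_r G$. The key input is the result quoted from \cite{Sier}: since $G$ is discrete, the action of $G$ on $A$ is exact, and the action of $G$ on $\widehat{A}$ is essentially free, the \Cs{} $A$ separates the ideals in $A \rtimes_r G$; that is, the map $J \mapsto J \cap A$ from ideals of $A \rtimes_r G$ to $G$-invariant ideals of $A$ is injective. So if $J_1 \subsetneq J_2$ are distinct ideals in $A \rtimes_r G$, then $J_1 \cap A \subsetneq J_2 \cap A$ are distinct $G$-invariant ideals in $A$. Now I would invoke the ideal property of $A$: projections in $A$ separate the ideals of $A$, so there is a projection $p \in A$ lying in $J_2 \cap A$ but not in $J_1 \cap A$. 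Since $p \in A \subseteq A \rtimes_r G$, this exhibits a projection in $A \rtimes_r G$ that distinguishes $J_1$ from $J_2$, which is exactly the ideal property for $A \rtimes_r G$.

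The one point requiring a little care is the following: the ideal property of $A$ says projections separate the ideals of $A$, but what I need is that projections separate the \emph{$G$-invariant} ideals of $A$ — and moreover that the separating projection can itself be taken inside the relevant $G$-invariant ideal $J_2 \cap A$. This is immediate once one recalls the standard reformulation of the ideal property: $A$ has the ideal property if and only if, whenever $I_1 \subseteq I_2$ are ideals in $A$ with $I_1 \neq I_2$, there is a projection in $I_2 \setminus I_1$; equivalently, every ideal of $A$ is generated (as an ideal) by the projections it contains. Applying this to the pair of $G$-invariant ideals $I_1 = J_1 \cap A \subseteq I_2 = J_2 \cap A$ produces the desired projection $p \in I_2 \setminus I_1$, and $p \in J_2$, $p \notin J_1$.

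Assembling these: let $J_1, J_2$ be ideals of $A \rtimes_r G$ with the same set of projections. Then $J_1 \cap A$ and $J_2 \cap A$ are $G$-invariant ideals of $A$. A projection $p \in A$ lies in $J_i \cap A$ if and only if $p \in J_i$, so $J_1$ and $J_2$ have the same projections in $A$; hence the $G$-invariant ideals $J_1 \cap A$ and $J_2 \cap A$ contain the same projections of $A$. By the ideal property of $A$, $J_i \cap A$ is the ideal of $A$ generated by its projections, so $J_1 \cap A = J_2 \cap A$. Finally, because $A$ separates the ideals in $A \rtimes_r G$ (by \cite{Sier}, using discreteness, exactness of the action, and essential freeness on $\widehat A$), the equality $J_1 \cap A = J_2 \cap A$ forces $J_1 = J_2$. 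Thus projections in $A \rtimes_r G$ separate its ideals, i.e.\ $A \rtimes_r G$ has the ideal property.

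I do not anticipate a serious obstacle here — this is essentially a formal deduction once the separation result of \cite{Sier} is in hand. The only thing to be vigilant about is making sure the argument genuinely uses projections lying \emph{in} $A$ (so that the ideal property of the subalgebra transfers), rather than merely some abstract cardinality of projection sets; the reformulation of the ideal property in terms of ideals being generated by their projections is what makes this clean. If one wanted to be extra careful one could also note that $J \cap A$ is automatically an ideal in $A$ for any ideal $J$ in $A \rtimes_r G$, and that it is $G$-invariant because $u_t (J \cap A) u_t^* \subseteq J \cap A$ for all $t \in G$ — but these are standard facts and I would state them without proof.
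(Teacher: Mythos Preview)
Your proof is correct and follows essentially the same approach as the paper's: both invoke the separation result from \cite{Sier} to conclude that $J \mapsto J \cap A$ is injective, then use the ideal property of $A$ to find a projection in $A$ distinguishing any two distinct ideals of $A \rtimes_r G$. The paper's proof is a two-line sketch of exactly this argument, and you have simply written out the details that the paper leaves to the reader.
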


\begin{proof}
The assumptions on the action imply that $A$ separates ideals in ${A
\rtimes_r G}$, cf.\ \cite{Sier}. As projections in $A$
separates ideals in $A$, it easily follows that projections in $A$
(and hence also projections in $A \rtimes_r G$) separate ideals in $A
\rtimes_r G$. 
\end{proof}

\begin{lemma}
\label{projections.using.essentially.free}
Let $(A,G)$ be a  $C^*$-dynamical system with $A$ separable and 
$G$ countable and
discrete. Assume that the action of $G$ on $\widehat A$ is essentially free.
Then for every $G$-invariant ideal $I$ in $A$ and every non-zero
positive element $b$ in $A/I\rtimes_r G$ there exists a non-zero
positive element $a$ in $A/I$ such that $a\precsim b$.
\end{lemma}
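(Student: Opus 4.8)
The statement: given $(A,G)$ with $A$ separable, $G$ countable discrete, action on $\widehat A$ essentially free; for every $G$-invariant ideal $I$ in $A$ and every non-zero positive $b \in A/I \rtimes_r G$, there's a non-zero positive $a \in A/I$ with $a \precsim b$.

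First observation: since $I$ is $G$-invariant, $A/I$ is again a separable $C^*$-algebra with a $G$-action, and the action of $G$ on $\widehat{A/I}$ (which is a closed $G$-invariant subset of $\widehat A$) is again essentially free (restriction of essentially free to closed invariant subsets is essentially free — this is basically in the definition). So WLOG we can replace $A$ by $A/I$ and $I$ by $0$: we need to show that for every non-zero positive $b \in A \rtimes_r G$ there's a non-zero positive $a \in A$ with $a \precsim b$.

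The standard approach here is via the conditional expectation $E: A\rtimes_r G \to A$ and a "pushing down" argument. Since $E$ is faithful and $b \neq 0$, $E(b) \neq 0$. The obstacle is that $E(b)$ need NOT satisfy $E(b) \precsim b$ in general (that's exactly the curious open problem mentioned in the remark). So one cannot just take $a = E(b)$. Instead, the idea is to use essential freeness to find a point $x \in \widehat A$ with trivial isotropy, evaluate/localize near $x$, and use an approximation argument to cut $b$ down to something close to an element of $A$. Concretely:

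\begin{proof}[Proof sketch]
Since $I$ is $G$-invariant, the quotient $A/I$ is a separable $C^*$-algebra carrying a $G$-action whose induced action on $\widehat{A/I} \subseteq \widehat A$ (a closed $G$-invariant subset) is again essentially free; replacing $(A,G)$ by $(A/I, G)$ we may assume $I = 0$ and must produce, for a given non-zero positive $b \in A \rtimes_r G$, a non-zero positive $a \in A$ with $a \precsim b$.

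Write $b = \sum_{t \in G} b_t u_t$ as a formal sum with $b_t = E(bu_t^*) \in A$ and $b_e = E(b)$; after scaling assume $\|b\| = 1$, so $\|E(b)\| \le 1$ and, since $E$ is faithful, $E(b) \neq 0$. Fix $0 < \varepsilon < 1/4$. Choose a finite subset $F \subseteq G$ with $e \in F$ and an element $b' = \sum_{t \in F} b_t u_t$ in $C_c(G,A)$ with $\|b - b'\| < \varepsilon$; then $E(b') = E(b)$. The plan is to find a positive contraction $d \in A$ supported near a point $x_0 \in \widehat A$ of trivial isotropy, with $\|E(b)^{1/2} d\|$ close to $\|E(b)\|$ but with $\|b_t u_t\, d\|$, or rather $\|d\, b_t u_t\, d\|$, small for every $t \in F \setminus \{e\}$. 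Essential freeness is what makes this possible: near a point with trivial isotropy the sets $U$ and $t \cdot U$ can be taken disjoint for each $t \in F \setminus\{e\}$ (shrink $U$ using that $F$ is finite and the action is free at $x_0$), and a function $d$ supported in $U$ then satisfies $d (t\cdot d) = 0$, which forces $d\, (b_t u_t)\, d = 0$ for $t \neq e$ by the covariance relation $u_t d u_t^* = t\cdot d$. Consequently $d\, b'\, d$ is within $2\varepsilon$ (in norm) of $d\, b_e\, d = d\, E(b)\, d \in A^+$, and hence $\| d b d - d E(b) d \| < 3\varepsilon$.

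Now choose $x_0$ in the open support of $E(b)$ with trivial isotropy — possible because essential freeness gives a dense set of such points and the open support of $E(b)$ is non-empty — and choose $d$ above to be a positive contraction supported in a small enough neighbourhood $U$ of $x_0$ (contained in the open support of $E(b)$, and with $U, t\cdot U$ pairwise disjoint over $t \in F\setminus\{e\}$) with $d(x_0) = 1$. Set $a_0 = d E(b) d \in A^+$; shrinking $U$ we may ensure $\|a_0\|$ is bounded below by a positive constant $\delta > 0$ (since $E(b)(x_0) > 0$), in particular $a_0 \neq 0$. Then $\|d b d - a_0\| < 3\varepsilon$. Applying a standard Cuntz-comparison fact (e.g. \cite[Lemma 2.2]{KirRor} or the Rørdam–Kirchberg lemma that $\|x - y\| < \varepsilon$ implies $(x-\varepsilon)_+ \precsim y$), with $3\varepsilon < \delta$, we obtain a non-zero positive element $a := (a_0 - 3\varepsilon)_+ \in A$ with $a \precsim d b d \precsim b$. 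This is the required element, completing the proof.
\end{proof}

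The main obstacle is the middle step: arranging, via essential freeness, a positive element $d \in A$ that simultaneously localizes $E(b)$ (keeping $dE(b)d$ away from zero) and kills all the off-diagonal terms $b_t u_t$ for $t$ in the finite set $F$. This requires the simple but crucial point that, at a point of trivial isotropy, the finitely many translates $x_0, t\cdot x_0$ ($t \in F\setminus\{e\}$) are distinct, so a sufficiently concentrated $d$ has $d(t\cdot d)=0$ and the covariance relation then annihilates the cross terms; combined with the density of trivial-isotropy points inside the open support of $E(b)$. The reduction to $I = 0$ and the final Cuntz-comparison estimate are routine.
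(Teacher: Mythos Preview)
Your argument is essentially correct when $A$ is commutative, but the lemma is stated for arbitrary separable $A$, and several steps silently assume commutativity. The phrases ``$d(x_0)=1$'', ``$E(b)(x_0)>0$'', and ``the open support of $E(b)$'' only make sense when $A=C_0(X)$; more substantively, the step where you produce an open neighbourhood $U\ni x_0$ with $U\cap t.U=\emptyset$ for every $t\in F\setminus\{e\}$ uses that $\widehat A$ is Hausdorff. For general $A$ the primitive spectrum is only $T_0$, and distinct points $x_0$ and $t.x_0$ need not be separable by disjoint open sets, so one cannot in general force the exact orthogonality $d\,(t.d)=0$ that your cancellation of the cross terms relies on. (Even granting a suitable $U$, the construction of a positive contraction $d$ ``supported in $U$'' with $\|dE(b)d\|$ bounded below requires an argument via approximate units of the ideal $J_U$ rather than pointwise evaluation.)

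The paper avoids this obstacle by observing that essential freeness on $\widehat A$ implies the induced action on each quotient $A/I$ is topologically free and hence \emph{properly outer}, and then invoking \cite[Lemma~7.1]{OlePed3}: this produces, for any $\varepsilon>0$, a positive norm-one element $x\in A/I$ with $\|xE(b)x-xbx\|<\varepsilon$ and $\|xE(b)x\|>\|E(b)\|-\varepsilon$. That lemma is precisely the non-commutative substitute for your bump function $d$, delivering approximate (rather than exact) annihilation of the cross terms without any Hausdorff assumption on the spectrum. Your argument is, in effect, the direct ``hands-on'' proof of the Olesen--Pedersen lemma in the abelian case (and, as the paper notes in the remark following the lemma, this route also covers abelian $A$ without separability); but for the general separable case you need to either quote \cite[Lemma~7.1]{OlePed3} or reproduce its proof, which is where the real work lies.
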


\begin{proof} 
Let $I$ and $b$ be as above. We can without loss of generality
assume that $\|E(b)\| = 1$, where $E  \colon A/I\rtimes_r G \to A/I$
is the canonical (faithful) conditional expectation. 
Essential freeness of the action on
$\widehat{A}$ implies that the induced action of $G$ on $A/I$ is
topologically free and hence properly outer, see \cite{ArcSpi}. The
existence of an element  $x\in (A/I)^+$
satisfying 
\begin{eqnarray*}
\|x\|= 1, \quad  \|xE(b)x-xbx\|\leq 1/4, \quad \|xE(b)x\| \geq
\|E(b)\|-1/4 = 3/4.
\end{eqnarray*}
is ensured by \cite[Lemma 7.1]{OlePed3}.
With $a = (xE(b)x-1/2)_+$ we claim that $0\neq a\precsim xbx \precsim
b$. Indeed, the 
element $a$ is non-zero because $\|xE(b)x\|>1/2$, and $a\precsim
xbx$ holds since $\|xE(b)x-xbx\| < 1/2$, cf.\  \cite[Proposition
2.2]{Ror:uhfII}. 
\end{proof}

The conclusion of Lemma~\ref{projections.using.essentially.free} also
holds if $A$ is abelian and not necessarily separable (a case that
shall have our interest). The existence of the element $x$ in the
proof above was ensured by \cite[Lemma 7.1]{OlePed3} and the assumption
that $A/I$ is separable. In the case where $A$ is abelian and not
necessarily separable the existence of $x$ with the desired properties
was established in \cite[Proposition 2.4]{Exel}.

\begin{theorem}
\label{main.theorem}
Let $(A,G)$ be a C*-dynamical system with $G$ discrete. Suppose
that the action of $G$ on $A$ is exact and that the action of $G$ on
$\widehat{A}$ is essentially free. Suppose also that $A$ is 
separable and has the ideal property. Then the following statements are
equivalent:
\begin{enumerate} 
\item  Every non-zero positive element in $A$ is properly infinite in
$A\rtimes_r G$. 
\item The \Cs{} $A\rtimes_r G$ is purely infinite.
\item Every non-zero hereditary sub-\Cs{} in any quotient of
  $A\rtimes_r G$ contains an infinite projection. 
\end{enumerate}
\end{theorem}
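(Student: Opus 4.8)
The plan is to prove the cyclic chain of implications $(2)\Rightarrow(3)\Rightarrow(1)\Rightarrow(2)$, using Proposition~\ref{prop1} for the final implication and the structural lemmas above for the rest. The implication $(2)\Rightarrow(3)$ is the easiest: if $A\rtimes_r G$ is purely infinite, then so is every quotient (pure infiniteness passes to quotients, \cite[Theorem 4.19]{KirRor}); moreover, by Lemma~\ref{lemma.A.IP.cross.IP}, $A\rtimes_r G$ has the ideal property, and this passes to quotients as well. A purely infinite \Cs{} with the ideal property has the property that every non-zero hereditary sub-\Cs{} contains an infinite projection, because in a purely infinite \Cs{} every non-zero hereditary sub-\Cs{} contains a non-zero projection when the ideal property holds (projections are plentiful), and every non-zero projection in a purely infinite \Cs{} is properly infinite, hence infinite.

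For $(3)\Rightarrow(1)$, fix a non-zero positive element $a\in A$; I want to show $a$ is properly infinite in $A\rtimes_r G$. The strategy is to work inside the hereditary sub-\Cs{} $B$ of $A\rtimes_r G$ generated by $a$: by hypothesis $(3)$ (applied to the trivial quotient), $B$ contains an infinite projection $p$. One then wants to ``transport'' the proper infiniteness: since $p\in B$, one has $p\precsim a$, and one needs to go the other way, $a\precsim p$ (or at least $a\precsim p\oplus p\oplus\cdots$). Here is where the ideal property and essential freeness enter. Using Lemma~\ref{lemma.A.IP.cross.IP}, $A\rtimes_r G$ has the ideal property, so the ideal generated by $a$ is also generated by a projection; combining with the fact that $p$ generates (a piece of) the same ideal — actually one needs $p$ and $a$ to generate the same ideal, which may require first passing to the ideal generated by $a$ and choosing $p$ in there appropriately — one gets $a\precsim p$ by infiniteness (properness) of $p$, via \cite[Proposition 3.5(ii)]{KirRor}. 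Then $a\oplus a\precsim p\oplus p\precsim p\precsim a$, so $a$ is properly infinite.

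The implication $(1)\Rightarrow(2)$ is where Proposition~\ref{prop1} does the work, and this is the step I expect to be the main obstacle — specifically, verifying its hypotheses. We need: (i) $A$ separates the ideals in $A\rtimes_r G$, which holds by \cite{Sier} given exactness and essential freeness of the action (as recalled in Section~\ref{sec:notations}); and (ii) $E(a)\precsim a$ for all positive $a\in A\rtimes_r G$. Hypothesis (i) is immediate. Hypothesis (ii) is the delicate point: the Remark after Proposition~\ref{prop1} flags that $a\precsim E(a)$ for general $a$ is open, but here we need the reverse inequality $E(a)\precsim a$. The idea is to use Lemma~\ref{projections.using.essentially.free} together with proper infiniteness: given positive $a\in A\rtimes_r G$, the lemma (with $I=0$) produces a non-zero positive $a_0\in A$ with $a_0\precsim a$. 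Since $A$ separates ideals, $E(a)$ lies in the ideal generated by $a$ (by \cite[Theorem 1.10]{Sier}), and $E(a)\in A$; because $A$ has the ideal property, the ideal in $A$ generated by $E(a)$ is generated by a projection, and one chains Cuntz comparisons: $E(a)$ is a positive element of $A$, hence properly infinite in $A\rtimes_r G$ by hypothesis $(1)$ (if non-zero; if $E(a)=0$ there is nothing to prove), so $E(a)\precsim$ (anything generating its ideal) $\precsim a$ by \cite[Proposition 3.5(ii)]{KirRor}. Once (i) and (ii) are in place, Proposition~\ref{prop1} yields that $A\rtimes_r G$ is purely infinite, completing the cycle. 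The main obstacle, as noted, is managing the ideal-theoretic bookkeeping in steps $(3)\Rightarrow(1)$ and $(1)\Rightarrow(2)$ so that the Cuntz comparisons between $a$, $E(a)$, projections, and the infinite projection $p$ all take place within (or are controlled by) the correct ideal — the ideal property of $A$ and the ideal-separation property of the pair $(A,G)$ are exactly what make this bookkeeping go through.
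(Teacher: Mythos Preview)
Your cycle $(2)\Rightarrow(3)\Rightarrow(1)\Rightarrow(2)$ differs from the paper's route, which proves $(ii)\Leftrightarrow(iii)$ via \cite[Proposition~2.1]{PasRor}, the easy $(ii)\Rightarrow(i)$, and then $(i)\Rightarrow(iii)$ directly. Both of your non-trivial implications have genuine gaps.

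In $(1)\Rightarrow(2)$ you invoke \cite[Proposition~3.5(ii)]{KirRor} in the wrong direction. That proposition says: if $y$ is properly infinite, then $z\precsim y$ for every $z$ in the ideal generated by $y$. So proper infiniteness of $E(a)$ gives $x\precsim E(a)$ for $x$ in its ideal, \emph{not} $E(a)\precsim x$. Your claimed chain ``$E(a)\precsim(\text{anything generating its ideal})\precsim a$'' has both links backwards: to get $E(a)\precsim a$ from $E(a)\in\Ideal(a)$ you would need $a$ itself to be properly infinite, which is precisely what you are trying to establish. The element $a_0\in A$ with $a_0\precsim a$ produced by Lemma~\ref{projections.using.essentially.free} does not help, since $a_0$ may generate a strictly smaller ideal than $E(a)$, so $E(a)\precsim a_0$ is not available either. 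I do not see how to verify $E(a)\precsim a$ directly under hypothesis~(i); the paper's Remark after Proposition~\ref{prop1} is asserting that this relation holds \emph{a~posteriori}, once Theorem~\ref{main.theorem} is proved by another route.

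In $(3)\Rightarrow(1)$ there are two problems. First, condition~(iii) only hands you an \emph{infinite} projection $p$, not a properly infinite one, so the step $p\oplus p\precsim p$ is unjustified. Second, even if $p$ were properly infinite, you have no control over the ideal it generates inside the hereditary subalgebra $\overline{a(A\rtimes_r G)a}$; it may well be strictly smaller than the ideal generated by $a$, so $a\precsim p$ fails. Your parenthetical ``which may require first passing to the ideal generated by $a$ and choosing $p$ in there appropriately'' does not resolve this: hypothesis~(iii) gives no mechanism for choosing $p$ full relative to $a$.

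The paper sidesteps both issues by proving $(i)\Rightarrow(iii)$: given a hereditary subalgebra $B$ in a \emph{quotient} $(A/I)\rtimes_r G$, Lemma~\ref{projections.using.essentially.free} produces $0\ne a\in A/I$ with $a\precsim b$, the ideal property yields a projection $q\in A$ whose image lies in the ideal of $a$, and then proper infiniteness of $a$ (coming from a positive lift and hypothesis~(i)) gives $q+I\precsim a\precsim b$, putting a properly infinite projection inside $B$. Working in every quotient is essential; it is exactly what makes \cite[Proposition~2.1]{PasRor} applicable for the passage back to~(ii).
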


\begin{proof}	
(ii) $\Leftrightarrow$ (iii). By Lemma \ref{lemma.A.IP.cross.IP} the
reduced crossed product $A\rtimes_r G$ has the ideal property. The
equivalence is obtained from \cite[Proposition 2.1]{PasRor}. 

(ii) $\Rightarrow$ (i). This is contained in Proposition~\ref{prop1}
(or in \cite[Theorem 4.16]{KirRor}). 	
	
(i) $\Rightarrow$ (iii). Fix an ideal $J$ in ${A\rtimes_r G}$ and a
non-zero hereditary sub-\Cs{} $B$ in 
the quotient $(A\rtimes_r G)/J$. We will show that $B$ contains an
infinite projection.  

By the assumptions on the action of $G$ on $A$ and on $\widehat{A}$
it follows from
\cite[Theorem 1.16]{Sier} that $A$ separates ideals in $A \rtimes_r
G$. Hence, 
$$(A\rtimes_r G)/J=(A/I)\rtimes_r G,$$
with $I=J\cap A$. Fix a non-zero
positive element $b$ in $B$. By
Lemma~\ref{projections.using.essentially.free} there exists a non-zero 
positive element $a$ in $A/I$ such that $a\precsim b$ relative to
$A/I\rtimes_r G$. By the assumption that $A$ has the ideal property we can
find a projection $q\in A$ that belongs to the ideal in $A$ generated
by the preimage of $a$ in $A$ but not to $I$. Then $q+I$ belongs to
the ideal in $A/I$ generated by $a$, whence $q+I \precsim a \precsim
b$ in $A/I \rtimes_r G$ 
(because $a$ is properly infinite by the assumption in (i)), see
\cite[Proposition 3.5 (ii)]{KirRor}. 

From
the comment after \cite[Proposition 2.6]{KirRor} we can find $z\in
A/I\rtimes_r G$ such that $q+I=z^*bz$. With $v=b^{1/2}z$ 
it follows that $v^*v= q+I$, whence $p := vv^* =
b^{1/2}zz^*b^{1/2}$ is a projection in $B$, which is equivalent to
$q$. By the assumption in (i), $q$ is properly infinite, and hence so
is $p$. 
\end{proof}

\section{Group actions on the Cantor set and paradoxical sets}

Recall that a commutative \Cs{} $C_0(X)$, with $X$ a locally compact
Hausdorff space, is of real rank zero if and only if $X$ is totally
disconnected (which again happens if and only if $C_0(X)$ has the
ideal property). We shall in this section study group actions on totally
disconnected spaces. First we give a sharpening of
Theorem~\ref{main.theorem} in the case where $A$ is abelian, of
real rank zero, but not necessarily separable: 

\begin{theorem}
\label{main.theorem.abelian}
Let $(A,G)$ be a  $C^*$-dynamical system with $A=C_0(X)$ and with $G$
discrete and exact. Suppose that the action of $G$ on $X$ is
essentially free and that $X$ is totally disconnected. Then the
following statements are equivalent: 
\begin{enumerate} 
\item Every non-zero projection in $A/I$ is infinite in $A/I\rtimes_r G$
for every $G$-invariant ideal $I$ in $A$.
\item Every non-zero projection in $A$ is properly infinite in
  $A\rtimes_r G$.  
\item The \Cs{} $A\rtimes_r G$ is purely infinite.
\end{enumerate}
\end{theorem}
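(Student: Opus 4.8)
The plan is to establish the cycle of implications $(iii)\Rightarrow(ii)\Rightarrow(i)\Rightarrow(iii)$, following the proof of Theorem~\ref{main.theorem} but taking care that $A=C_0(X)$ is now permitted to be non-separable, so that Theorem~\ref{main.theorem} itself cannot be quoted directly. First I would record the standing facts: since $X$ is totally disconnected, $A=C_0(X)$ has real rank zero and hence the ideal property; since $G$ is exact, the action of $G$ on $A$ is exact; consequently $A$ separates the ideals in $A\rtimes_r G$ by \cite[Theorem 1.16]{Sier}, and $A\rtimes_r G$ again has the ideal property by Lemma~\ref{lemma.A.IP.cross.IP}. I would also note that the conclusion of Lemma~\ref{projections.using.essentially.free} is available in the present (possibly non-separable) generality because $A$ is abelian, by the remark following that lemma, which invokes \cite[Proposition 2.4]{Exel} in place of \cite[Lemma 7.1]{OlePed3}.

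The implication $(iii)\Rightarrow(ii)$ is immediate from \cite[Theorem 4.16]{KirRor}: in a purely infinite \Cs{} every non-zero positive element, and in particular every non-zero projection lying in $A$, is properly infinite. For $(ii)\Rightarrow(i)$, let $I$ be a $G$-invariant ideal in $A$ and $q$ a non-zero projection in $A/I$. Writing $I=C_0(U)$ and $A/I=C_0(Y)$ with $Y=X\setminus U$ closed, $q$ is the indicator function $1_E$ of a compact open subset $E$ of $Y$; as a totally disconnected locally compact Hausdorff space has a basis of compact open sets, $E=W\cap Y$ for some compact open $W\subseteq X$, and then $\tilde q:=1_W$ is a projection in $A$ with $\rho(\tilde q)=q$, where $\rho\colon A\to A/I$ is the quotient map. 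By $(ii)$ the projection $\tilde q$ is properly infinite in $A\rtimes_r G$, and the canonical surjection $\rho\rtimes_r\id\colon A\rtimes_r G\to(A/I)\rtimes_r G$ carries $\tilde q$ to the non-zero element $q$ while preserving $\precsim$ and its matricial amplifications; hence $q\oplus q\precsim q$ in $(A/I)\rtimes_r G$, so $q$ is properly infinite, and in particular infinite, there.

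The substantial implication is $(i)\Rightarrow(iii)$. As $A\rtimes_r G$ has the ideal property, \cite[Proposition 2.1]{PasRor} reduces pure infiniteness to verifying that every non-zero hereditary sub-\Cs{} $B$ of every quotient of $A\rtimes_r G$ contains an infinite projection. Fix an ideal $J$ in $A\rtimes_r G$; since $A$ separates ideals, $(A\rtimes_r G)/J=(A/I)\rtimes_r G$ with $I=J\cap A$. Choose $0\neq b\in B^+$. Applying Lemma~\ref{projections.using.essentially.free} (which, as noted after its proof, holds also for abelian non-separable $A$) to $(A,G)$ and the $G$-invariant ideal $I$ produces a non-zero positive element $a\in A/I$ with $a\precsim b$ in $(A/I)\rtimes_r G$. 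Since $A/I=C_0(Y)$ has real rank zero, there is a non-zero projection $q\in A/I$ with $q\le\lambda a$ for a suitable scalar $\lambda>0$ (take $q=1_E$ for a non-empty compact open $E$ on which $a$ is bounded away from $0$), so $q\precsim a\precsim b$. By hypothesis $(i)$, $q$ is infinite in $(A/I)\rtimes_r G$. Finally, just as in the proof of Theorem~\ref{main.theorem}, the comment after \cite[Proposition 2.6]{KirRor} yields $z\in(A/I)\rtimes_r G$ with $z^*bz=q$; putting $v:=b^{1/2}z$ gives $v^*v=q$, and $p:=vv^*=b^{1/2}zz^*b^{1/2}$ is a projection in $\overline{b\,((A/I)\rtimes_r G)\,b}\subseteq B$ that is equivalent to $q$ and hence infinite. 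Thus $B$ contains an infinite projection, completing the proof.

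The main obstacle is the absence of a separability assumption, which blocks the direct appeal to Theorem~\ref{main.theorem}: one must re-run its argument while substituting the Exel form of Lemma~\ref{projections.using.essentially.free} and routing the final step through the hereditary-projection criterion of \cite[Proposition 2.1]{PasRor} --- this last point is also precisely what makes the weaker hypothesis \emph{infinite} (rather than \emph{properly infinite}) in $(i)$ sufficient for $(i)\Rightarrow(iii)$, while $(ii)\Rightarrow(i)$ conversely supplies the stronger conclusion in the corner $A$ itself. A secondary, more routine point is that lifting projections from $A/I$ to $A$ in $(ii)\Rightarrow(i)$ genuinely uses that a totally disconnected locally compact Hausdorff space has a basis of compact open sets.
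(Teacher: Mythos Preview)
Your proof is correct and follows essentially the same route as the paper's. The only minor deviation is in the criterion invoked for $(i)\Rightarrow(iii)$: you pass through Lemma~\ref{lemma.A.IP.cross.IP} to secure the ideal property for $A\rtimes_r G$ and then quote \cite[Proposition~2.1]{PasRor}, whereas the paper appeals directly to \cite[Proposition~4.7]{KirRor}, which furnishes the needed implication (infinite projection in every non-zero hereditary sub-\Cs{} of every quotient $\Rightarrow$ purely infinite) without first establishing the ideal property. Both routes are valid here; the paper's choice simply avoids the small detour. Your explicit lift of projections in $(ii)\Rightarrow(i)$ via compact-open sets is likewise equivalent to the paper's one-line citation of real-rank-zero lifting \cite[Theorem~3.14]{BroPed:realrank}.
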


\begin{proof}
(iii) $\Rightarrow$ (ii). Every non-zero projection in any purely
infinite \Cs{} is properly infinite. 

(ii) $\Rightarrow$ (i). Use that $A$ has real rank zero to lift a projection
in $A/I$ to a projection in $A$, cf.\ \cite[Theorem
3.14]{BroPed:realrank}. 

(i) $\Rightarrow$ (iii). Fix a non-zero hereditary sub-\Cs{} $B$
in the quotient of ${A\rtimes_r G}$ by some ideal $J$ in $A\rtimes_r
G$. By \cite[Proposition 4.7]{KirRor} we need only show that $B$
contains an infinite projection. 

By essential freeness of the action of $G$ on $X$ and exactness of $G$ we
have the identification  
$$(A\rtimes_r G)/J=(A/I)\rtimes_r G,$$
for $I:=J\cap A$, cf.\ \cite[Theorem 1.16]{Sier}. Fix a non-zero
positive element $b$ in $B$. By 
Lemma~\ref{projections.using.essentially.free} (and the remark below
that lemma) there exists a
non-zero positive element $a$ in $A/I$ such that $a\precsim b$
relative to $A/I\rtimes_r G$. The hereditary sub-\Cs{} $\overline{a
  (A/I)a}$ contains a non-zero projection $q$ by the assumption on
$A$, and $q$ is infinite in $A/I \rtimes_r G$ by the assumption in
(i). As $q \precsim b$ (and because $q$ is a projection) we can find
$z\in A/I\rtimes_r G$ such that $q=z^*bz$. It follows that $p:=
b^{1/2}zz^*b^{1/2}$ is a projection in $B$, which is equivalent to
$q$, and hence is infinite. 
\end{proof}

In lieu of the previous
result we are interested in knowing for
which clopen subsets $V$ of $X$ the projection $1_V$ in $C(X)$ is
properly infinite in $C(X) \rtimes_r G$. A sufficient, and sometimes
also necessary, condition for this to happen is that $V$ is
$G$-paradoxical in $X$ with respect to the open subsets of $X$. We
give a formal definition of this phenomenon below:

\begin{definition}
\label{def.paradox}
Given a discrete group $G$ acting on a topological space $X$ and a family
$\E$ of subsets of $X$, a non-empty set $U\subseteq X$ is called
\emph{$(G,\E)$-paradoxical} if there exist non-empty sets
$V_1,V_2,\dots$, $V_{n+m}\in \E$ and elements
$t_1,t_2,\dots,t_{n+m}$ in $G$ such that $\bigcup_{i=1}^nV_i  = 
\bigcup_{i=n+1}^{n+m}V_i  = U$ and such that $\big(t_k.V_k\big)_{k=1}^{n+m}$
are pairwise disjoint subsets of $U$.
\end{definition}

A projection $p$ in a \Cs{} $A$ is properly infinite if and only if
there exist partial isometries $x$ and $y$ in $A$ such that $x^*x=y^*y = p$
and $xx^* + yy^* \le p$. In the following let $\tau_X$ denote
the family of open subsets of the topological space $X$.

\begin{proposition}
\label{lemma.paradoxical.properly.infinite}
Let $X$ be a locally compact Hausdorff space, and let $G$ be a discrete group
acting on $X$. Suppose that $U$ is a compact open subset of $X$. Then
$U$ is $(G,\tau_X)$-paradoxical if and only if there exist elements
$x,y$ in $C_c(G,C_0(X)^+)$ such that $x^*x=y^*y = 1_U$ and $xx^*+yy^*
\le 1_U$. In this case $1_U$ is properly infinite in $C_0(X)
\rtimes_r G$.
\end{proposition}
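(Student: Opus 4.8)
The plan is to prove both directions of the equivalence, and then deduce proper infiniteness from the existence of the partial isometries $x,y$.

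\medskip

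\noindent\textbf{From paradoxicality to partial isometries.} Suppose $U$ is $(G,\tau_X)$-paradoxical, witnessed by open sets $V_1,\dots,V_{n+m}\subseteq U$ and group elements $t_1,\dots,t_{n+m}$ with $\bigcup_{i=1}^n V_i=\bigcup_{i=n+1}^{n+m}V_i=U$ and $(t_k.V_k)_k$ pairwise disjoint subsets of $U$. First I would replace the open cover $V_1,\dots,V_n$ of the compact open set $U$ by a partition of unity: choose $f_1,\dots,f_n\in C_0(X)^+$ with $\supp f_i\subseteq V_i$, $0\le f_i\le 1$, and $\sum_{i=1}^n f_i=1_U$; similarly $g_{n+1},\dots,g_{n+m}$ subordinate to $V_{n+1},\dots,V_{n+m}$ with $\sum g_i=1_U$ (compactness and total-regularity of $X$ make this routine; here I use that $U$ is compact open so $1_U\in C_0(X)$). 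Now set
\begin{equation*}
x=\sum_{i=1}^n (f_i^{1/2}\circ t_i^{-1})\,u_{t_i}\,f_i^{1/2},\qquad
y=\sum_{i=n+1}^{n+m} (g_i^{1/2}\circ t_i^{-1})\,u_{t_i}\,g_i^{1/2},
\end{equation*}
both elements of $C_c(G,C_0(X)^+)$. A direct computation with the covariance relation gives $x^*x=\sum_i f_i=1_U$ and $xx^*=\sum_i f_i\circ t_i^{-1}$, which is supported on the disjoint union $\bigsqcup_i t_i.V_i\subseteq U$, so $xx^*\le 1_U$; likewise $y^*y=1_U$ and $yy^*$ is supported on $\bigsqcup_{i>n} t_i.V_i$. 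Since the two supports are disjoint and both contained in $U$, $xx^*+yy^*\le 1_U$. The small subtlety is that the cross terms $u_{t_i}^*u_{t_j}$ in $x^*x$ ($i\ne j$) do not drop out automatically; they vanish because the functions involved have disjoint supports after translation — one must check that $(f_i^{1/2}\circ t_i^{-1})(f_j^{1/2}\circ t_j^{-1})=0$, which holds since $t_i.V_i$ and $t_j.V_j$ are disjoint, so in fact $xx^*$ is literally a sum of functions with pairwise disjoint supports and $x^*x$ collapses to $\sum f_i$ after using $u_t^*u_t=1$.

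\medskip

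\noindent\textbf{From partial isometries back to paradoxicality.} Conversely, given $x,y\in C_c(G,C_0(X)^+)$ with $x^*x=y^*y=1_U$ and $xx^*+yy^*\le 1_U$, write $x=\sum_{t\in S}a_t u_t$ with $S\subseteq G$ finite and $a_t\in C_0(X)^+$, and similarly $y$. Expanding $x^*x=1_U$ and applying the conditional expectation $E$ (and the covariance relation) I would extract, for each $t$, that $a_t^*a_t=(a_t)^2$ has support inside $U$ and that the ``diagonal'' terms sum to $1_U$ on $U$; the sets $V_t:=\{a_t\ne 0\}$ then cover $U$ (this uses that $\sum_t a_t^2\ge$ (something like) $E(x^*x)=1_U$ together with the cross-term cancellation forced by $x^*x$ being in $C_0(X)$). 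Meanwhile $xx^*=\sum_{s,t}(a_s\circ s^{-1})\cdots$ has the property that the sets $t.V_t$, $t\in S$, are essentially disjoint and contained in $U$ — again because $xx^*\le 1_U\le 1$ forces near-orthogonality of translated supports, and one can shrink the $V_t$ slightly (using that $X$ is Hausdorff and the supports are inside a compact open set) to genuinely disjoint nonempty open sets while keeping the covering property. Doing the same with $y$ and noting $xx^*$ and $yy^*$ have disjoint supports by $xx^*+yy^*\le 1_U$, the resulting families give a $(G,\tau_X)$-paradoxical decomposition of $U$.

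\medskip

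\noindent\textbf{Proper infiniteness.} For the last assertion, once $x,y\in C_c(G,C_0(X))\subseteq C_0(X)\rtimes_r G$ satisfy $x^*x=y^*y=1_U$ and $xx^*+yy^*\le 1_U$, the element $1_U$ (a projection in $C_0(X)$, hence in the multiplier algebra, and an honest projection in $C_0(X)\rtimes_r G$ when $U$ is compact open) is properly infinite by the criterion recalled just before the proposition: a projection $p$ is properly infinite iff there are partial isometries with $x^*x=y^*y=p$ and $xx^*+yy^*\le p$. So this direction is immediate from the characterization.

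\medskip

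\noindent\textbf{Main obstacle.} The forward direction (paradoxicality $\Rightarrow$ existence of $x,y$) is essentially bookkeeping with partitions of unity and the covariance relation. The genuinely delicate step is the converse: translating the \emph{analytic} inequality $xx^*+yy^*\le 1_U$ with $x,y$ in $C_c(G,C_0(X)^+)$ into the \emph{combinatorial} disjointness of translated open sets. One must be careful that a priori the $a_t$ need not have disjoint supports nor be indicator-like, and that ``$\le 1_U$'' only gives an inequality of positive operators rather than support disjointness on the nose; the trick is that evaluating at the states/characters of $C_0(X)$ (i.e.\ restricting the covariance picture over each point of $X$) reduces it to a statement about finite-dimensional corners, where $xx^*\le 1$ does force the relevant orthogonality of the translated supports after an arbitrarily small shrinking of the open sets $V_t$. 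That reduction — and verifying that the shrunk sets still cover $U$ — is where the real work lies.
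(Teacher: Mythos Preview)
Your forward direction has the right strategy (partitions of unity plus covariance), but the explicit formula is off. With $x=\sum_i(f_i^{1/2}\circ t_i^{-1})\,u_{t_i}\,f_i^{1/2}$, covariance gives $(f_i^{1/2}\circ t_i^{-1})u_{t_i}=u_{t_i}f_i^{1/2}$, so your $x$ collapses to $\sum_i u_{t_i}f_i$ and hence $x^*x=\sum_i f_i^2$, which is not $1_U$ unless the $f_i$ are characteristic functions. The paper simply takes $x=\sum_i u_{t_i}h_i^{1/2}$ (no left factor); then the diagonal of $x^*x$ gives $\sum_i h_i=1_U$, while the cross terms vanish exactly by disjointness of the $t_i.V_i$, as you correctly argue. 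This is a one-line fix.

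The genuine gap is in the converse. You propose to extract disjointness of the translated supports from the inequality $xx^*\le 1_U$ by passing to characters, obtaining ``near-orthogonality,'' and then \emph{shrinking} the open sets $V_t$. This does not work: shrinking will in general destroy the covering $\bigcup_t V_t=U$ that you also need, and $xx^*\le 1$ is merely a norm bound, not a support constraint. The paper's point, which you are missing, is that disjointness is \emph{exact} and comes for free from the hypothesis $x^*x=1_U\in C_0(X)$. Writing $x=\sum_{t\in F}u_th_t^{1/2}$ with $h_t\ge 0$, one has $0=E(1_Uu_r)=E(x^*xu_r)$ for every $r\neq e$; expanding, this is a finite sum of terms each of which is a product of two non-negative functions in $C_0(X)$, so every summand vanishes individually. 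That forces $t.h_t\perp s.h_s$ for $t\neq s$ on the nose, hence $t.V_t\cap s.V_s=\emptyset$ with $V_t=\{h_t>0\}$. The same Fourier-coefficient trick applied to $y^*y=1_U$ handles the $W_s$'s, and applied to $y^*x$ (which equals $0$, since $xx^*+yy^*\le 1_U$ makes the range projections $xx^*$ and $yy^*$ orthogonal) gives $t.V_t\cap s.W_s=\emptyset$. Finally $t.V_t\subseteq U$ follows from $1_U\cdot E(xx^*)=E(1_Uxx^*)=E(xx^*)$, since $E(xx^*)=\sum_t t.h_t$. No approximation or shrinking enters; positivity of the coefficient functions is precisely what turns the vanishing of Fourier coefficients into exact support disjointness.
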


\begin{proof}
Suppose first that $U$ is  $(G,\tau_X)$-paradoxical, and let 
$(V_i, t_i)_{i=1}^{n+m}$ be a system of subsets of
$U$ and elements in $G$ that witnesses the paradoxicality.  
Find partitions of unity $(h_i)_{i=1}^{n}$ and $(h_i)_{i=n+1}^{n+m}$ 
for $U$ relative to the open covers $(V_i)_{i=1}^n$ and
$(V_i)_{i=n+1}^{n+m}$, respectively. As $U$ is compact and open, we
can assume that each $h_i$ has support contained in $U$ so that
$\sum_{i=1}^n h_i = \sum_{i=n+1}^{n+m} h_i = 1_U$. Set
$$x=\sum_{i=1}^n u_{t_i}h_i^{1/2}, \qquad y=\sum_{i=n+1}^{n+m}
u_{t_i}h_i^{1/2}.$$ 
Notice that
$$h_{i}^{1/2} u^*_{t_i}u_{t_j}h_j^{1/2} = u^*_{t_i}
(t_i.h_i^{1/2})(t_j.h_j^{1/2})u_{t_j}=0$$
when $i \ne j$, since $\supp(t_k.h^{1/2}_k) \subseteq t_k.V_k$. It
follows that
$$x^*x=\sum_{i,j=1}^n h_{i}^{1/2}u_{t_i}^*u_{t_j}h_{j}^{1/2} \, = \, 
\sum_{i=1}^n h_{i}^{1/2}u_{t_i}^*u_{t_i}h_{i}^{1/2}
\, = \, \sum_{i=1}^n h_{i} =1_U.$$ 
In a similar way we see that $y^*y= 1_U$, $y^*x = 0$, and hence
$xx^*+yy^* \le 1_U$.

Assume, conversely, that we are given two elements 
$$x=\sum_{t\in F} u_{t}h_t^{1/2}, \qquad y=\sum_{s\in F'}
u_{s}g_s^{1/2}$$ in $C_c(G,C_0(X)^+)$
satisfying
$x^*x=y^*y=1_U$ and $xx^*+yy^*\le 1_U$, where
$F,F'\subseteq G$ are finite and $h_t $, $g_s$ belong to $C_0(X)^+$. Put
$$V_t =\{\xi \in X\colon h_t(\xi)\neq 0\}, \qquad W_s =\{\xi\in X\colon
g_s(\xi)\neq 0\}.$$  
We show that the system $\{(V_t,t) : t \in F\} \cup \{(W_s,s) : s \in
F'\}$ witnesses the $(G,\tau_X)$-paradoxicality of $U$. To this end,
first note that 
$$1_U=x^*x=E(x^*x)=\sum_{t,s\in F}
E(h_t^{1/2}u_{t}^*u_{s}h_s^{1/2})=\sum_{t\in F}
E(h_t^{1/2}u_{t}^*u_{t}h_t^{1/2})=\sum_{t\in F}h_t.$$ 
This implies that $\cup_{t \in F} V_t = U$. In a similar way we see
that $\cup_{s \in F'} W_s = U$.

For $r\neq e$ in $G$ we have that
$$0 = E(1_Uu_r)=E(x^*xu_r)=\sum_{t,s\in
  F}E(h_t^{1/2}u_{t}^*u_{s}h_s^{1/2}u_r).$$ 
Each term in the sum of the right-hand side of the equation above
is a positive element in $C_0(X)$, and must hence be zero. We can therefore conclude that 
$h_t^{1/2}u_{t}^*u_{s}h_s^{1/2}u_{t^{-1}s}^* = 0$
for all $s,t \in F$ with $t\neq s$. 
This shows that $t.h_t \perp s.h_s$ for $t\neq s\in F$, which again
implies that $t.V_t$ and $s.V_s$ are disjoint, when $s,t \in F$ and $s
\neq t$. In a similar
way we obtain that $t.W_t \cap s.W_s = \emptyset$ 
for $t\neq s\in F'$ and that $t.V_t \cap s.W_s  = \emptyset$ for $t\in F$ and $s\in
F'$ (the last property is obtained 
from the fact that $0=E(x^*yu_r)$ for every $r\in G$). 

Finally we show that $t.V_t\subseteq U$ for every $t\in F$. Since $1_U
E(xx^*) = E(1_Uxx^*)$ and $E(xx^*) = \sum_{t \in F} t. h_t$,
we can conclude that $t.h_t = 1_U (t.h_t)$, which yields the desired
inclusion. In a similar way we see that $s.W_s \subseteq U$ for all $s
\in F'$.
\end{proof}
\noindent
Theorem~\ref{main.theorem.abelian} and
Proposition~\ref{lemma.paradoxical.properly.infinite} imply the
following: 

\begin{corollary}
\label{ess.free.pi}
Let $(A,G)$ be a C*-dynamical system with $A=C(X)$ and $G$ discrete
and exact. Suppose that the action of $G$ on $X$ is essentially free
and $X$ has a basis of clopen $(G,\tau_X)$-paradoxical sets.
Then $C(X)\rtimes_r G$ is purely infinite.
\end{corollary}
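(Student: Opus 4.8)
The plan is to verify condition (i) of Theorem~\ref{main.theorem.abelian} and then invoke the implication (i) $\Rightarrow$ (iii) of that theorem. A Hausdorff space with a basis of clopen sets is totally disconnected, so all the standing hypotheses of Theorem~\ref{main.theorem.abelian} are in force: $G$ is discrete and exact, the action on $X$ is essentially free, and $X$ is totally disconnected. Hence it suffices to show that for every $G$-invariant ideal $I$ in $A = C(X)$, every non-zero projection of $A/I$ is infinite in $(A/I)\rtimes_r G$.

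Such an ideal is of the form $I = C_0(X\setminus Z)$ for a closed $G$-invariant (hence compact) subset $Z\subseteq X$, and restriction of functions gives a $G$-equivariant isomorphism $A/I \cong C_0(Z)$, so $(A/I)\rtimes_r G \cong C_0(Z)\rtimes_r G$. A non-zero projection in $C_0(Z)$ is the indicator $1_W$ of a non-empty compact open subset $W$ of $Z$. Writing $W = O\cap Z$ with $O$ open in $X$, fixing a point $\xi\in W$, and using that the clopen $(G,\tau_X)$-paradoxical sets form a basis for $X$, I obtain such a set $W_0$ with $\xi\in W_0\subseteq O$; then $W_0\cap Z$ is a non-empty clopen --- in particular compact open --- subset of $Z$ contained in $W$.

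The heart of the matter, and the only place that needs care, is the observation that $W_0\cap Z$ is $(G,\tau_Z)$-paradoxical. If $(V_k,t_k)_{k=1}^{n+m}$ witnesses the $(G,\tau_X)$-paradoxicality of $W_0$, then, because $Z$ is $G$-invariant and closed, the sets $V_k\cap Z$ are open in $Z$, the unions over the two index blocks both equal $W_0\cap Z$, and $t_k.(V_k\cap Z) = (t_k.V_k)\cap Z$ are pairwise disjoint subsets of $W_0\cap Z$; discarding the indices $k$ with $V_k\cap Z = \emptyset$ --- which cannot exhaust either block, as $W_0\cap Z\neq\emptyset$ --- leaves a genuine witnessing system in $\tau_Z$. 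Applying Proposition~\ref{lemma.paradoxical.properly.infinite} to the dynamical system $(Z,G)$, with $W_0\cap Z$ as the compact open set, shows that $1_{W_0\cap Z}$ is properly infinite in $C_0(Z)\rtimes_r G$. Since $1_{W_0\cap Z}\leq 1_W$ and a projection dominating a non-zero infinite projection is itself infinite, $1_W$ is infinite in $C_0(Z)\rtimes_r G = (A/I)\rtimes_r G$. This establishes condition (i), so Theorem~\ref{main.theorem.abelian} yields that $C(X)\rtimes_r G$ is purely infinite. Beyond the restriction-to-$Z$ bookkeeping just sketched, the remaining ingredients --- the description of the $G$-invariant ideals of $C(X)$ and the elementary fact about projections above infinite projections --- are entirely routine, so I do not anticipate a genuine obstacle here.
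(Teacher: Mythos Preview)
Your proof is correct and follows the paper's indicated approach, combining Theorem~\ref{main.theorem.abelian} with Proposition~\ref{lemma.paradoxical.properly.infinite}; the paper gives no further detail, so your argument is a legitimate way to fill it in. You verify condition~(i) of Theorem~\ref{main.theorem.abelian} by restricting paradoxicality to closed $G$-invariant subsets $Z$ (correctly noting that the witnessing system intersects down to one for $W_0\cap Z$), whereas one could alternatively verify condition~(ii) directly in $X$ by covering an arbitrary non-empty clopen $V$ with finitely many paradoxical clopen subsets and running a short induction to see that $1_V$ is properly infinite---but your route via~(i) is at least as clean and avoids that induction.
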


\begin{remark}
If an action of a discrete group $G$ on a 
compact Hausdorff space $X$ is $n$-filling in the sense of Jolissaint
and Robertson, \cite{Guyan:pi}, (see also the introduction), 
then every non-empty 
open subset of $X$ is $(G,\tau_X)$-paradoxical and the action 
is minimal. Minimality is not required in Corollary 
\ref{ess.free.pi} thus giving a new characterization of  pure
infiniteness for this class of non-simple crossed products \Cs{}s.
\end{remark}

\begin{remark}
Let $G$ be a countable group acting on the Cantor set $X$.
 Assume that the full crossed product $C(X)\rtimes
G$ is simple and that all non-zero projections in $C(X)$ are properly
infinite in 
$C(X)\rtimes G$. Then $C(X)\rtimes G$ is purely infinite. Indeed,
simplicity of $C(X)\rtimes G$ trivially implies that the action of $G$
on $C(X)$ is exact and regular (the canonical surjection  $C(X)\rtimes G \to
  C(X)\rtimes_r G$ is injective). We can therefore infer pure infiniteness
of $C(X) \rtimes G$ from  \cite[Remark 4.3.7]{Sier:phd}. 
\end{remark}

\section{The type semigroup $S(X,G,\E)$} 
\label{sec:type}

We shall here study pure infiniteness of the crossed product \Cs{} $C(X)
\rtimes_r G$ coming from an action of a discrete group $G$ on a
compact Hausdorff space $X$ (typically the Cantor set) by studying the
associated \emph{type semigroup} considered in \cite{Wag:B-T}. 

A state on a preordered monoid $(S,+,\leq)$ is a map $f\colon S\to
[0,\infty]$ which respects $+$ and $\leq$ and fulfills that
$f(0)=0$. A state is said to be \emph{non-trivial} if it takes a value
different from $0$ and $\infty$. An element $x \in S$ is said to be \emph{properly infinite} if $2x \le x$; and the monoid $S$ is said to be \emph{purely
  infinite} if every $x\in S$ is properly infinite. The monoid $S$ is
\emph{almost unperforated} if, whenever $x,y\in S$ and $n,m\in \N$ are
such that $nx\leq my$ and $n>m$, then $x\leq y$. 

Let $\E$ be a $G$-invariant subalgebra of $\cP(X)$, the power set of
$X$. We write $S(X,G,\E)$ for
the \emph{type semigroup} of the induced action of $G$ on $\E$, where only
sets from $\E$ can by used to witness the equidecomposability of sets
in $\E$, cf.\ \cite[p.116]{Wag:B-T}. In more detail, the set
$S(X,G,\E)$ is defined to be
$$\Big\{\bigcup_{i=1}^n A_i\times \{i\}\colon A_i\in \E, \, n\in
\N \Big\} \big/\sim_S,$$ 
where the equivalence relation $\sim_S$ is defined as follows: Two
sets $A =\bigcup_{i=1}^n A_i\times \{i\}$ and
$B=\bigcup_{j=1}^m B_j\times \{j\}$ are equivalent, denoted
$A\sim_S B$, if there exist $l\in \N$, $C_k \in \E$, $t_k \in G$,
and natural numbers $n_k, m_k$ (for $k=1,2,\dots, l$) such that
$$A=\bigsqcup_{k=1}^l C_k\times \{n_k\}, \qquad  B=\bigsqcup_{k=1}^l
t_k.C_k\times \{m_k\},$$
(we use the symbol $\sqcup$ to emphasize the union being disjoint).
The equivalence class containing $A$ is denoted by $[A]$, and addition
is defined by
$$\Big[\bigcup_{i=1}^n A_i\times \{i\}\Big]+\Big[\bigcup_{j=1}^m B_j\times
\{j\}\Big] = 
\Big[\bigcup_{i=1}^{n} A_i \times \{i\} \, \cup \, 
\bigcup_{j=1}^m B_j\times \{n+j\} \Big].$$ 
For $E \in \E$, we shall often write $[E]$ instead of $[E \times
\{1\}]$. The semigroup $S(X,G,\E)$ has neutral element $0 =[\emptyset]$, and it
is equipped with the \emph{algebraic preorder} ($x\leq y$ if $y=x+z$
for some $z$). This makes $S(X,G,\E)$ into a preordered monoid. 

\begin{lemma}
\label{state.to.measure}
Let $G$ be a discrete group acting on a compact Hausdorff space $X$. Let $\E$
denote the family of clopen subsets of $X$. Suppose that
$\sigma(\E)=\B(X)$ (the Borel $\sigma$-algebra on $X$). 
Then  every non-trivial state $f$ on $S(X,G,\E)$ lifts
to a $G$-invariant measure $\mu$ on $(X,\B(X))$ such that $0<\mu(F)<\infty$
for some $F\in \E$. 
\end{lemma}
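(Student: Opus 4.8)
The plan is to build the measure $\mu$ by first producing a finitely additive, $G$-invariant set function on the clopen algebra $\E$ from the state $f$, and then to invoke a standard extension/regularity argument (Carathéodory plus the fact that clopen sets in a compact space force any finitely additive function on $\E$ to be automatically countably additive on $\E$) to pass to a genuine measure on $\B(X) = \sigma(\E)$. The point where the hypotheses enter is: $X$ compact means every cover of a clopen set by clopen sets has a finite subcover, so there are no nontrivial countable disjoint clopen covers, which is what makes finite additivity upgrade to $\sigma$-additivity on the algebra $\E$; and $\sigma(\E) = \B(X)$ is what lets the resulting Borel measure see all of $\B(X)$.

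First I would define $\nu \colon \E \to [0,\infty]$ by $\nu(E) = f([E])$, where $[E] = [E \times \{1\}]$ in $S(X,G,\E)$. Since $f$ is a state (respects $+$, $\le$, and kills $0$) and since for disjoint $E, F \in \E$ one has $[E \sqcup F] = [E] + [F]$ in the type semigroup, $\nu$ is finitely additive; $\nu(\emptyset) = f(0) = 0$; and $\nu$ is $G$-invariant because $E \sim_S t.E$ for every $t \in G$, so $[E] = [t.E]$ and hence $\nu(t.E) = \nu(E)$. Non-triviality of $f$ gives a class $y \in S(X,G,\E)$ with $0 < f(y) < \infty$; writing $y = [\bigcup_{i=1}^n A_i \times \{i\}] = \sum_{i=1}^n [A_i]$ with $A_i \in \E$, finiteness of $f(y)$ forces $\nu(A_i) = f([A_i]) < \infty$ for every $i$, and $f(y) > 0$ forces $\nu(A_i) > 0$ for at least one $i$; take that $A_i$ to be the desired $F$.

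Next I would promote $\nu$ from finitely additive to countably additive \emph{on the algebra $\E$}. Suppose $E = \bigsqcup_{k=1}^\infty E_k$ with $E, E_k \in \E$ pairwise disjoint. Each $E_k$ is open, so $(E_k)_k$ is an open cover of the compact set $E$; compactness plus disjointness forces all but finitely many $E_k$ to be empty, so the sum is really finite and countable additivity on $\E$ is immediate from finite additivity. Now Carathéodory's extension theorem applies to the countably additive $\nu$ on the algebra $\E$ and produces a measure $\mu$ on $\sigma(\E) = \B(X)$ extending $\nu$; since $\E$ is $G$-invariant and $\nu$ is $G$-invariant, uniqueness of the Carathéodory extension on $\sigma(\E)$ (or a direct check via a $\pi$–$\lambda$/monotone class argument, using that the collection of Borel sets on which $\mu$ and its $t$-translate agree is a $\sigma$-algebra containing $\E$) gives $G$-invariance of $\mu$. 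Finally $\mu(F) = \nu(F) \in (0,\infty)$ for the set $F \in \E$ chosen above, which is exactly the required conclusion.

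The main obstacle I anticipate is purely bookkeeping rather than conceptual: one must be a little careful that the equivalence relation $\sim_S$ in $S(X,G,\E)$ does indeed give $[E \sqcup F] = [E] + [F]$ and $[E] = [t.E]$ directly from the definition (both are one-block decompositions, so this is routine), and that $\nu$ is genuinely finitely additive and not merely subadditive — this follows because disjointness of $E$ and $F$ lets one realize $[E \sqcup F]$ as the sum $[E] + [F]$ with no re-cutting needed, and $f$ respects $+$. A secondary point needing a word is \emph{non-triviality propagates}: a priori $f$ could take the value $\infty$ on some $[A_i]$ while $f(y)$ is finite, but monotonicity of $f$ together with $[A_i] \le y$ (since $y = [A_i] + \sum_{j \ne i}[A_j]$) rules this out. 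With these checks in place the extension step is textbook.
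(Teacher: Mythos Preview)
Your proposal is correct and follows essentially the same route as the paper: define the set function $\nu(E)=f([E])$ on $\E$, use compactness of $X$ to see that any countable disjoint clopen decomposition is actually finite (so $\nu$ is a premeasure), extend by Carath\'eodory to $\sigma(\E)=\B(X)$, and deduce $G$-invariance of the extension from $G$-invariance on $\E$ via uniqueness. Your treatment is in fact slightly more explicit than the paper's in unpacking why non-triviality of $f$ yields a clopen $F$ with $0<\nu(F)<\infty$ (by decomposing a witness $y=\sum_i[A_i]$ and using $[A_i]\le y$), but the underlying argument is the same.
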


\begin{proof}
Define $\mu_0 \colon \E\to [0,\infty]$ by $\mu_0(F) =f([F])$ for $F\in \E$.
We show that $\mu_0$ is a \emph{premeasure} in the sense of \cite[p.\
30]{Foll}. We trivially have that $\mu_0(\emptyset)=f(0)=0$. Suppose that
$(F_i)_{i=1}^\infty$ is a sequence  of pairwise disjoint sets in $\E$ such that
$F =\bigcup_{i=1}^\infty F_i$ belongs to $\E$. Then $F = \bigcup_{i=1}^n
F_i$ for some $n$ by compactness of $F$ (so $F_i = \emptyset$ for all $i >
n$). Additivity of $f$ now implies that $\mu_0(F) = \sum_{i=1}^\infty
\mu_0(F_i)$. This shows that $\mu_0$ is a premeasure. 
By \cite[Theorem 1.14]{Foll}, $\mu_0$ extends (uniquely) to a measure $\mu$ on
$\B(X)$. As $\mu(F) = \mu_0(F) = f([F])$ for all $F \in \E$, the
existence of a clopen subset $F$ of $X$ such that  
$0<\mu(F)<\infty$ follows from the fact that $f$ is non-trivial. 

Let $t \in G$ be given, and let $t.\mu$ be the Borel measure given by
$(t.\mu)(E) = \mu(t^{-1}.E)$. Then, for every $E \in \E$,
$$(t.\mu)(E) = \mu(t^{-1}.E) = \mu_0(t^{-1}.E) = f([t^{-1}.E]) = f([E]) = \mu_0(E).$$ 
By uniqueness of $\mu$ we must have $t.\mu = \mu$, so $\mu$ is
$G$-invariant.
\end{proof}

\noindent It is well-known that a bounded invariant trace on a \Cs, with 
a discrete group acting on it, extends to a (bounded) trace on the crossed product. 
We shall need to extend unbounded invariant tracial weights to the crossed product, cf.\ 
Lemma \ref{measure.to.trace} below. Whereas this result may be known to
experts, we failed to find a suitable reference for it, so we include the following two
lemmas. The second named author thanks George Elliott and Uffe
Haagerup for explaining the trick below (how to use Dini's theorem to
obtain norm-convergence from convergence on states) and its
application to Lemma~\ref{lm:norm-convergence}. 

Let $A$ be a \Cs, and denote by $S(A)$ its state space (i.e., the set of
all positive linear functionals on $A$ of norm 1). For each
self-adjoint element $a$ in $A$ let $\hat{a} \in C(S(A),\R)$ denote the
function given by $\hat{a}(\rho) = \rho(a)$, $\rho \in S(A)$. Note
that $\|a\| = \|\hat{a}\|_\infty$.  Suppose that $a$ and $(a_n)$
are positive elements in $A$ such that $\rho(a) = \sum_{n=1}^\infty
\rho(a_n)$ for all $\rho \in S(A)$. Then $\hat{a} = 
\sum_{n=1}^\infty \hat{a}_n$,
and the sum is point-wise convergent, and hence uniformly
convergent by
Dini's theorem. It follows that $a = \sum_n a_n$, and that the sum is 
norm-convergent. 

\begin{lemma} \label{lm:norm-convergence}
Let $A$ be a \Cs, and let $G$ be a countable discrete group acting on
$A$. For each element $x$ in the reduced crossed product $A \rtimes_r
G$ and for each $t \in G$ let $x_t = E(xu_t^*) \in A$. It follows that
$$E(xx^*) = \sum_{t \in G} x_tx_t^*, \qquad E(x^*x) = \sum_{t \in G}
t.({x_{t^{-1}}}^*x_{t^{-1}}),$$
and the sums are norm-convergent.
\end{lemma}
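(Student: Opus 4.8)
The plan is to establish the two stated identities by exploiting the faithful conditional expectation $E$ together with the state-space/Dini's-theorem trick described in the paragraph preceding the lemma. First I would fix $x \in A \rtimes_r G$, write $x_t = E(xu_t^*) \in A$, and observe that formally $x = \sum_{t} x_t u_t$ and $x^* = \sum_t u_t^* x_t^* = \sum_t (t^{-1}.x_t^*) u_t^{-1}$. Multiplying these formal expansions and applying $E$ (which kills all terms $a u_g$ with $g \neq e$) gives, at least heuristically, $E(xx^*) = \sum_t x_t x_t^*$ and $E(x^*x) = \sum_t u_t^* x_t^* x_t u_t = \sum_t t^{-1}.(x_t^* x_t)$, which upon reindexing $t \mapsto t^{-1}$ becomes $\sum_t t.(x_{t^{-1}}^* x_{t^{-1}})$. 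So the content of the lemma is not the formula itself but the assertion that these sums converge in norm.

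To make this rigorous I would argue at the level of states. Let $\rho \in S(A)$ be any state on $A$; then $\rho \circ E$ is a state on $A \rtimes_r G$ (here one uses that $E$ is a positive unital map, so the composition is a state). For a finite subset $F \subseteq G$ let $x_F = \sum_{t \in F} x_t u_t \in C_c(G,A)$. The key computation is that $E\big((x - x_F)(x-x_F)^*\big) = \sum_{t \notin F} x_t x_t^*$, which is a standard orthogonality calculation using that $E(a u_g) = 0$ for $g \neq e$ and that the "Fourier coefficients" of $x - x_F$ are exactly $x_t$ for $t \notin F$ and $0$ for $t \in F$. Since $E$ is positive, $\sum_{t \in F} x_t x_t^* \leq E(xx^*)$ for every finite $F$, so the increasing net of finite partial sums is bounded above by $E(xx^*)$; applying the state $\rho$ gives $\sum_{t \in F} \rho(x_t x_t^*) \leq \rho(E(xx^*))$, so the nonnegative series $\sum_t \rho(x_t x_t^*)$ converges for every $\rho \in S(A)$.

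It remains to show the sum is actually $\rho(E(xx^*))$ pointwise in $\rho$, not merely $\leq$. For this I would use that $x_F \to x$ in the norm of $A \rtimes_r G$ along a suitable increasing sequence of finite sets (true by density of $C_c(G,A)$, but one must be slightly careful: arbitrary $x$ need not have norm-convergent Fourier expansion — however, by Fejér-type averaging, or simply by choosing $x_F$ to be finite truncations of an element of $C_c(G,A)$ approximating $x$ and passing to a limit, one gets $\|E(xx^*) - \sum_{t\in F} x_t x_t^*\| = \|E((x-x_F)(x-x_F)^*)\| \le \|x - x_F\|\,\|x+x_F\|$... ). Actually the cleanest route: note $E((x-x_F)(x-x_F)^*) \to 0$ in norm whenever $x_F \to x$ in norm, hence along such a sequence the partial sums $\sum_{t \in F}x_t x_t^*$ converge in norm to $E(xx^*)$; since the full net of partial sums over all finite $F$ is increasing and bounded by $E(xx^*)$, and a cofinal subnet converges to $E(xx^*)$, the whole net converges in norm to $E(xx^*)$. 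This gives $E(xx^*) = \sum_t x_t x_t^*$ with norm convergence. Alternatively, and matching the hint in the text, one verifies $\rho(E(xx^*)) = \sum_t \rho(x_t x_t^*)$ for all $\rho \in S(A)$ by the norm-convergence of $x_F \to x$ along a sequence, then invokes Dini's theorem (the functions $\widehat{\sum_{t\in F} x_t x_t^*}$ increase pointwise to $\widehat{E(xx^*)}$ on the compact space $S(A)$, so convergence is uniform, i.e. in norm). The second identity follows by the identical argument applied to $x^*$ in place of $x$, noting that $E((x^*)u_t^*) = E(u_t^{-1} x^* ) $... more directly, $x^* = \sum_s y_s u_s$ with $y_s = E(x^* u_s^*) = E((u_s x)^*) = (E(u_s x))^* = (s.E(x u_{s^{-1}}^{-1})... )$; unwinding, $y_s = s.(x_{s^{-1}}^*)$, and then $E(x^* x) = \sum_s y_s y_s^* = \sum_s s.(x_{s^{-1}}^*) \cdot s.(x_{s^{-1}}) = \sum_s s.(x_{s^{-1}}^* x_{s^{-1}})$, as claimed.

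The main obstacle is the convergence subtlety: one must not assume the Fourier series $\sum_t x_t u_t$ converges to $x$ in norm (it generally does not), so the argument must route through either the positivity/monotone-net estimate $\sum_{t \in F} x_t x_t^* \le E(xx^*)$ combined with a density/truncation argument to pin down the limit, or through the state-space picture where Dini's theorem upgrades pointwise convergence of $\rho \mapsto \sum_t \rho(x_t x_t^*)$ to uniform (hence norm) convergence. Getting the bookkeeping right for the second identity — tracking how the group action twists the Fourier coefficients of $x^*$ relative to those of $x$ — is the other place where care is needed, but it is purely formal once the first identity is in hand.
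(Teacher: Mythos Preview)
Your outline contains a genuine gap at exactly the point you flag as ``the main obstacle.'' You correctly observe that the Fourier series $\sum_t x_t u_t$ need not converge to $x$ in norm, but none of the fixes you list actually closes the argument. Each of your options (``along such a sequence,'' ``by the norm-convergence of $x_F \to x$,'' ``a cofinal subnet converges'') presupposes that $x_F \to x$ in norm for some increasing sequence of finite sets $F$, which is precisely what may fail. Your ``key computation'' as written, $E((x-x_F)(x-x_F)^*) = \sum_{t \notin F} x_t x_t^*$, is also circular: it is the statement of the lemma applied to $x-x_F$. The correct identity, obtained from the $A$-bimodularity of $E$, is
\[
E\big((x-x_F)(x-x_F)^*\big) \;=\; E(xx^*) - \sum_{t \in F} x_t x_t^*,
\]
which does give you the inequality $\sum_{t \in F} x_t x_t^* \le E(xx^*)$, but not the equality in the limit.

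Here is how to repair your approach. Given $\varepsilon>0$, choose $y \in C_c(G,A)$ with $\|x-y\|<\varepsilon$ and let $F_0$ be the (finite) support of $y$. For any finite $F \supseteq F_0$ one has $y_F = y$, hence $x - x_F = (x-y) - (x-y)_F$. Applying the displayed identity above to $w := x-y$ in place of $x$ gives
\[
0 \;\le\; E(xx^*) - \sum_{t \in F} x_t x_t^* \;=\; E\big((w-w_F)(w-w_F)^*\big) \;\le\; E(ww^*),
\]
and $\|E(ww^*)\| \le \|w\|^2 < \varepsilon^2$. So the partial sums converge to $E(xx^*)$ in norm directly, and Dini is not even needed. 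With this fix your derivation of the second identity from the first (via $(x^*)_s = s.(x_{s^{-1}}^{\,*})$) is fine.

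The paper takes a different route: it realizes $A \rtimes_r G$ concretely on $\ell^2(G,H)$ via the regular representation, uses the strongly convergent resolution of the identity $\sum_t V_t V_t^* = I$ to obtain $\pi(E(xx^*)) = \sum_t \pi(x_t x_t^*)$ as a strong limit, and then passes through vector states (which are strongly continuous) to get $\rho(E(xx^*)) = \sum_t \rho(x_t x_t^*)$ for every $\rho \in S(A)$; Dini then upgrades this to norm convergence. The representation-theoretic detour is what supplies the pointwise equality that your argument was missing. Your repaired argument is more elementary and avoids both the Hilbert-space picture and Dini, at the cost of the small trick with $w = x-y$.
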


\begin{proof} Let $\pi \colon A \to B(H)$ be the universal
  representation of $A$, and let $$\pi \times \lambda \colon A \rtimes_r
  G \to B(\ell^2(G,H))$$ be the corresponding left regular
  representation of the reduced crossed product. For each $s \in G$,
  let $V_s \colon H \to 
  \ell^2(G,H)$ be the isometry given by 
$$V_s(\xi)(t) = \begin{cases} \xi, & t=s^{-1},\\ 0, & t \ne s^{-1}, 
\end{cases}
\qquad \xi \in H, \quad t \in G.$$
It is straightforward to check 
that $\sum_{t \in G} V_tV_t^* =I$ (the sum is strongly convergent), and that
$\pi(E(au_s^*)) = V_e^*(\pi \times \lambda)(a)V_s$
for all $a$ in $A \rtimes_r G$ and all $s \in G$. It follows that 
\begin{equation} \label{eq:n1}
 \pi(E(xx^*))  = V_e^* (\pi \times \lambda)(x) \Big({\textstyle{\sum_{t
      \in G}}} V_tV_t^*\Big) 
(\pi \times \lambda)(x^*) V_e = \sum_{t \in G} \pi(x_tx_t^*),
\end{equation}
where the sum is strongly convergent, and similary,
\begin{equation} \label{eq:n2}
\pi(E(x^*x)) = \sum_{t \in G} \pi(t.({x_{t^{-1}}}^*x_{t^{-1}})).
\end{equation}
For each $\rho \in S(A)$ there is a
vector state $\varphi$ on $B(H)$ such that $\rho = \varphi \circ \pi$
(because $\pi$ is the universal representation). As vector states are
strongly continuous, we can apply $\varphi$ to Equations \eqref{eq:n1}
and \eqref{eq:n2} above to obtain that
$$\rho(E(xx^*)) = \sum_{t \in G} \rho(x_tx_t^*), \qquad \rho(E(x^*x)) =
\sum_{t \in G} \rho(t.(x_{t^{-1}}^*x_{t^{-1}})),$$ 
for all $\rho \in S(A)$. 

The conclusion now follows from the comment above the lemma.
\end{proof}

\noindent
We shall refer to a map $\varphi$ from the positive cone, $A^+$,
of a \Cs{} $A$ to $[0,\infty]$ as being a tracial weight if it
satisfies $\varphi(\alpha a + \beta b) = \alpha \varphi(a) + \beta
\varphi(b)$ and $\varphi(x^*x)=\varphi(xx^*)$ for all $a,b \in A^+$,
all $\alpha, \beta \in \R^+$, and all $x \in A$. (A tracial weight
as above extends to an unbounded linear trace on the algebraic ideal of
$A$ generated by the set of positive elements $a \in A$ with
$\varphi(a) < \infty$.)

\begin{lemma}
\label{measure.to.trace}
Let $G$ be a countable 
discrete group acting on a \Cs{} $A$, and let $\varphi_0
\colon A^+ \to [0,\infty]$ be a $G$-invariant, lower
semi-continuous tracial weight. It follows that
the mapping $\varphi = \varphi_0 
\circ E \colon (A \rtimes_r G)^+ \to [0,\infty]$ is a lower
semi-continuous tracial weight which extends $\varphi_0$. 
\end{lemma}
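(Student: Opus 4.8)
The plan is to verify directly that $\varphi = \varphi_0 \circ E$ has the three required properties: (a) it extends $\varphi_0$; (b) it is linear on the positive cone; (c) it satisfies the trace identity $\varphi(x^*x) = \varphi(xx^*)$ for all $x \in A \rtimes_r G$; and (d) it is lower semi-continuous. Property (a) is immediate since $E$ restricted to $A$ is the identity. Property (b) is also immediate: $E$ is a linear (positive) map and $\varphi_0$ is additive and positively homogeneous, so the composite inherits these properties on $(A \rtimes_r G)^+$ (note $E$ maps the positive cone into $A^+$). Property (d) follows because $E$ is norm-continuous (hence continuous) and $\varphi_0$ is lower semi-continuous, so the composite of a continuous map with a lower semi-continuous one is lower semi-continuous.

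The heart of the matter is the trace identity (c). First I would reduce to the case where $G$ is countable: the element $x$ and the relevant computations involve only countably many group elements (those $t$ with $E(xu_t^*) \ne 0$), so one may pass to a countable subgroup, or simply observe that the sums below are genuinely indexed over countable sets. Writing $x_t = E(xu_t^*) \in A$ for $t \in G$, I would invoke Lemma~\ref{lm:norm-convergence} to get the two norm-convergent expansions
\[
E(xx^*) = \sum_{t \in G} x_t x_t^*, \qquad E(x^*x) = \sum_{t \in G} t.\big({x_{t^{-1}}}^* x_{t^{-1}}\big).
\]
Applying $\varphi_0$ and using that $\varphi_0$ is a \emph{lower semi-continuous} tracial weight (so it is additive on infinite sums of positive elements, or at least $\varphi_0$ of a norm-convergent sum of positives equals the sum of the values---this follows from countable additivity of lower semi-continuous weights, or by monotone convergence on the partial sums combined with lower semi-continuity), I obtain
\[
\varphi(xx^*) = \varphi_0\big(E(xx^*)\big) = \sum_{t \in G} \varphi_0(x_t x_t^*),
\]
\[
\varphi(x^*x) = \varphi_0\big(E(x^*x)\big) = \sum_{t \in G} \varphi_0\big(t.({x_{t^{-1}}}^* x_{t^{-1}})\big).
\]
Now the $G$-invariance of $\varphi_0$ gives $\varphi_0\big(t.({x_{t^{-1}}}^* x_{t^{-1}})\big) = \varphi_0({x_{t^{-1}}}^* x_{t^{-1}})$, and the trace property of $\varphi_0$ on $A$ gives $\varphi_0({x_{t^{-1}}}^* x_{t^{-1}}) = \varphi_0(x_{t^{-1}} {x_{t^{-1}}}^*)$. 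Re-indexing the sum via $t \mapsto t^{-1}$ then identifies the right-hand side with $\sum_{t \in G} \varphi_0(x_t x_t^*) = \varphi(xx^*)$, which is exactly what is wanted.

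The main obstacle I anticipate is justifying the interchange of $\varphi_0$ with the infinite (norm-convergent) sum of positive elements --- i.e., that $\varphi_0\big(\sum_n a_n\big) = \sum_n \varphi_0(a_n)$ for positive $a_n$ with $\sum_n a_n$ norm-convergent. One direction, $\sum_n \varphi_0(a_n) \le \varphi_0\big(\sum_n a_n\big)$, is just finite additivity plus monotonicity (partial sums are dominated). The reverse inequality is where lower semi-continuity of $\varphi_0$ is essential: the partial sums $\sum_{n \le N} a_n$ converge in norm to $\sum_n a_n$, so $\varphi_0\big(\sum_n a_n\big) \le \liminf_N \varphi_0\big(\sum_{n\le N} a_n\big) = \liminf_N \sum_{n \le N} \varphi_0(a_n) = \sum_n \varphi_0(a_n)$. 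This handles the interchange cleanly. Beyond that, one should also record that $\varphi$ takes values in $[0,\infty]$ and that $\varphi(0) = 0$, both of which are trivial; and the parenthetical remark in the statement about extending to an unbounded linear trace on an algebraic ideal is a standard consequence of polarization that needs no separate argument here.
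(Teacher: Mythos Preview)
Your proof is correct and follows essentially the same route as the paper: both verify the easy properties (extension, $\mathbb{R}^+$-linearity, lower semi-continuity) directly from the properties of $E$ and $\varphi_0$, and for the trace identity both invoke Lemma~\ref{lm:norm-convergence} to expand $E(xx^*)$ and $E(x^*x)$ as norm-convergent sums, then use lower semi-continuity of $\varphi_0$ to pass it through the sum, followed by $G$-invariance, the trace property of $\varphi_0$, and a re-indexing $t \mapsto t^{-1}$. Your explicit justification of the interchange (monotonicity for one inequality, lower semi-continuity for the other) is exactly what the paper's parenthetical remark is pointing to; the only superfluous step is your reduction to countable $G$, since countability is already in the hypothesis.
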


\begin{proof} It is trivial to verify that $\varphi$ is $\R^+$-linear
  and lower semi-continuous (the latter because $E$ is 
  continuous). It is also clear 
  that $\varphi$ extends $\varphi_0$. We must show that 
$\varphi(x^*x) = \varphi(xx^*)$ for all
$x \in A \rtimes_r G$. By Lemma~\ref{lm:norm-convergence}, 
by the assumption that $\varphi_0$ is lower
semi-continuous (and additive), which entails that it respects 
norm-convergent sums of positive elements, 
and by $G$-invariance and the trace property of
$\varphi_0$, it follows that
\begin{eqnarray*}
\varphi(x^*x) & = & \varphi_0(E(x^*x)) \, = \, 
\varphi_0 \Big(\sum_{t \in G} t.({x_{t^{-1}}}^*x_{t^{-1}})\Big) \\ & = &  
\sum_{t \in G} \varphi_0(t.({x_{t^{-1}}}^*x_{t^{-1}}))
\, = \, \sum_{t \in G} \varphi_0(x_t x_t^*) \\ & = & 
\varphi_0\Big(\sum_{t \in G} x_tx_t^*\Big) \, = \, \varphi_0(E(xx^*))\, =
\, \varphi(xx^*),
\end{eqnarray*}
as desired.  
 \end{proof}

\begin{theorem}
\label{trace.pi}
Let $(C(X),G)$ be a C*-dynamical system with $G$ countable, 
discrete, exact, and
$X$ the Cantor set. Let $\E$ denote the family of clopen subsets of
$X$. Suppose the action of $G$ on $X$ is essentially free. Consider
the following properties:
\begin{enumerate}
\item The semigroup $S(X,G,\E)$ is purely infinite.
\item Every clopen subset of $X$ is $(G,\tau_X)$-paradoxical.
\item The \Cs{} $C(X)\rtimes_r G$ is purely infinite.
\item The \Cs{} $C(X)\rtimes_r G$ is traceless\footnote{A \Cs{} is in
    \cite{KirRorOinf} said to be traceless if it admits no non-zero 
    lower semi-continuous (possibly unbounded) 2-quasitraces defined
    on an algebraic ideal of the given \Cs.}.
\item There are no non-trivial states on $S(X,G,\E)$.
\end{enumerate}
Then \emph{(i)} $\Rightarrow$ \emph{(ii)} $\Rightarrow$ \emph{(iii)} 
$\Rightarrow$ \emph{(iv)} $\Rightarrow$ \emph{(v)}. Moreover, if
$S(X,G,\E)$ is almost unperforated, then 
\emph{(v)} $\Rightarrow$ \emph{(i)}, whence all five properties are equivalent.
\end{theorem}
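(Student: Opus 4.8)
The plan is to prove the cycle of implications (i)$\Rightarrow$(ii)$\Rightarrow$(iii)$\Rightarrow$(iv)$\Rightarrow$(v), and then close the loop under the almost-unperforation hypothesis via (v)$\Rightarrow$(i). For (i)$\Rightarrow$(ii): if $S(X,G,\E)$ is purely infinite, then for each clopen $E$ we have $2[E]\le [E]$ in the type semigroup, and unwinding the definition of $\sim_S$ and the algebraic preorder this says precisely that $E\times\{1\}\cup E\times\{2\}$ is equidecomposable (using clopen pieces) with a subset of $E\times\{1\}$; splitting this data into the two ``copies'' yields clopen sets $V_1,\dots,V_n$ covering $E$ and $V_{n+1},\dots,V_{n+m}$ covering $E$ together with group elements moving them to pairwise disjoint subsets of $E$. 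Since clopen sets are open, this is exactly $(G,\tau_X)$-paradoxicality of $E$ in the sense of Definition~\ref{def.paradox}. For (ii)$\Rightarrow$(iii): this is immediate from Corollary~\ref{ess.free.pi}, since the clopen sets form a basis for the Cantor set and every such set is assumed $(G,\tau_X)$-paradoxical; alternatively invoke Theorem~\ref{main.theorem.abelian} together with Proposition~\ref{lemma.paradoxical.properly.infinite} directly.

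For (iii)$\Rightarrow$(iv): a purely infinite \Cs{} admits no non-zero lower semi-continuous $2$-quasitrace on any algebraic ideal. One way to see this: if $\tau$ were such a quasitrace, there would be a non-zero positive $a$ in the ideal with $\tau(a)<\infty$; but pure infiniteness gives $a\oplus a\precsim a$, which forces $2\tau(a)\le\tau(a)$ by the (quasi)trace property and lower semicontinuity, hence $\tau(a)=0$, and since such $a$ generate the ideal, $\tau=0$. For (iv)$\Rightarrow$(v): suppose $f$ is a non-trivial state on $S(X,G,\E)$. By Lemma~\ref{state.to.measure} (whose hypothesis $\sigma(\E)=\B(X)$ holds because $X$ is the Cantor set) $f$ lifts to a $G$-invariant Borel measure $\mu$ with $0<\mu(F)<\infty$ for some clopen $F$. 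Integration against $\mu$ defines a $G$-invariant, lower semi-continuous tracial weight $\varphi_0$ on $C(X)^+$ that is finite and non-zero on $1_F$; by Lemma~\ref{measure.to.trace} it extends to a lower semi-continuous tracial weight $\varphi=\varphi_0\circ E$ on $(C(X)\rtimes_r G)^+$, which is a non-zero $2$-quasitrace on an algebraic ideal, contradicting (iv).

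The main work is the converse (v)$\Rightarrow$(i) under almost-unperforation. Assume no non-trivial state exists on $S:=S(X,G,\E)$ and that $S$ is almost unperforated; I must show every element of $S$ is properly infinite, i.e.\ $2x\le x$ for all $x\in S$. The standard route is a Tarski-type dichotomy: for a preordered monoid, either some element carries a non-trivial state, or the monoid is ``purely infinite'' — more precisely, the absence of non-trivial states should give, for each $x\in S$, a comparison of the form $(n+1)x\le nx$ for some $n\in\N$ (this is where one uses that states detect obstructions to such comparisons, via a Hahn–Banach / Goodearl–Handelman type separation argument on the Grothendieck-type preordered semigroup, exactly as in the theory of the type semigroup and Tarski's theorem). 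Once $(n+1)x\le nx$ is established, almost unperforation applies with the roles $n':=n+1>m':=n$ and the inequality $(n+1)\cdot x \le n\cdot x$ rewritten appropriately — or more directly, from $(n+1)x\le nx\le nx$ one gets $2x\le x$ after absorbing: indeed $(n+1)x\le nx$ implies $2nx \le (2n-1)x\le\cdots$, and feeding $2x$ and $x$ into almost unperforation via $2(nx) \le (2n-1)x + x = 2nx$ is not quite it, so the cleanest statement is: $(n+1)x\le nx$ gives $kx\le nx$ for all $k\ge n$, hence $2(nx)\le nx$, and then almost unperforation (applied to $y=nx$ with $2y\le y$, or rather to get from $2nx \le nx$ down to $2x\le x$) — here one uses $n\cdot(2x) = 2nx \le nx = \tfrac n{?}$; to be safe I would phrase it as: $(n+1)x\le nx$ and $n+1>n$ give, by almost unperforation, $x\le x$ trivially, so instead iterate to $2nx\le nx$, i.e.\ $2(nx)\le nx$, then $nx$ is properly infinite, and $x\le nx\precsim$ a properly infinite element forces $2x\le 2(nx)\le nx$, and finally $nx \le$ (a large multiple of a properly infinite element $=$) that same element, so $2x\le x$. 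The hard part, and the step I expect to require the most care, is precisely the dichotomy/separation argument producing $(n+1)x\le nx$ from the non-existence of non-trivial states; this is where the theory of Tarski (as in Wagon) and the compactness/Hahn–Banach machinery for preordered monoids enters, and one must be careful that ``state'' here allows the value $+\infty$, so the separation must be carried out on a suitable quotient where the relevant element has finite, positive value or else is already properly infinite.
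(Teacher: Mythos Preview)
Your treatment of (i)$\Rightarrow$(ii)$\Rightarrow$(iii)$\Rightarrow$(iv)$\Rightarrow$(v) matches the paper's proof essentially line for line (the paper simply cites \cite{KirRorOinf} for (iii)$\Rightarrow$(iv), but your direct argument is fine).

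The gap is in (v)$\Rightarrow$(i). You correctly identify the two ingredients---a Goodearl--Handelman separation step and almost unperforation---but you never actually close the argument. Your final chain ``$2x\le 2(nx)\le nx$, and finally $nx\le\cdots$, so $2x\le x$'' is a non-sequitur: you have no inequality $nx\le x$ available, and ``$n\cdot(2x)\le n\cdot x$'' (which you correctly flag as useless) is all you get from $2nx\le nx$ if you don't choose the integers more carefully. The fix is short. From $(n+1)x\le nx$ one iterates to $\ell x\le nx$ for every $\ell\ge n$; taking $\ell=2(n+1)$ gives $(n+1)\cdot(2x)=2(n+1)x\le n\cdot x$, and now almost unperforation (with $n+1>n$) yields $2x\le x$ directly.

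The paper streamlines this by quoting a sharper output from the separation step: absence of a state normalised at $x$ gives $2x<_s x$ in the sense of \cite{OPR}, which already means $2(k+1)x\le kx$ for some $k$---so one lands immediately on the inequality $(k+1)\cdot(2x)\le k\cdot x$ to which almost unperforation applies, without needing the iteration. Either route works; what was missing in your write-up was the specific choice of integers that makes the almost-unperforation hypothesis ``$n>m$'' bite.
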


\begin{proof}
(i) $\Rightarrow$ (ii). Let $E\in \E$ be given. We show that $E$ is
$(G,\tau_X)$-paradoxical if $2[E] \le [E]$. Suppose that $2[E] \le
[E]$, and find $F \in \E$ such that
$2[E]+[F]=[E]$. Then
$$E \times \{1\} \, \sim_S \, (E \times \{1,2\}) \, \cup \, (F \times \{3\}).$$
It follows that there exists $\ell \in \N$, $A_j \in \E$, $t_j \in G$,
and $m_j \in \{1,2,3\}$ for $j=1,2, \dots, \ell$ such that 
$$E \times \{1\} = \bigsqcup_{j=1}^\ell A_j \times \{1\}, \quad (E \times
\{1,2\}) \, \cup \, (F \times \{3\})
=  \bigsqcup_{j=1}^\ell t_j.A_j \times \{m_j\}.$$ This entails that
the sets $A_j$ are pairwise disjoint subsets of $E$. 
Renumbering the $A_j$'s 
so that $m_j = 1$ for $1 \le j \le n$, $m_j = 2$ for $n <
j \le n+m$, and $m_j = 3$ for $n+m < j \le \ell$, we obtain
$$\bigsqcup_{j=1}^n t_j.A_j = \bigsqcup_{j=n+1}^{n+m} t_j.A_j = E.$$
This shows that $E$ is $(G,\tau_X)$-paradoxical.

(ii) $\Rightarrow$ (iii). Corollary \ref{ess.free.pi}.

(iii) $\Rightarrow$ (iv). We refer to \cite{KirRorOinf}.

(iv) $\Rightarrow$ (v). Suppose $f$ is a non-trivial state on the
preordered monoid $S(X,G,\E)$. Since $X$ is totally disconnected and
second countable, we have
$\B(X)=\sigma(\E)$. By Lemma \ref{state.to.measure} there exists a
$G$-invariant Borel measure $\mu$ on $X$ and a clopen subset $F$ of
$X$ such that $0<\mu(F)<\infty$. By Lemma \ref{measure.to.trace},
the (tracial) lower semi-continuous weight
$$\varphi_0(f) = \int_X f \, d\mu, \qquad f \in C(X)^+,$$
extends to a lower semi-con\-tin\-uous tracial weight
$\varphi \colon (C(X)\rtimes_r G)^+ \to [0,\infty]$. 
Restrict (and extend) $\varphi$ to a (linear, unbounded, lower
semi-con\-tin\-uous) trace $\tau$ defined on the algebraic ideal
generated by all 
positive elements $a$ in $C(X) \rtimes_r G$ with $\varphi(a) <
\infty$. As $0 < \varphi(1_F) < \infty$, the element $1_F$ belongs to
the domain of $\tau$ and $\tau(1_F) = \varphi(1_F) > 0$, whence $\tau$
is non-zero. Hence $C(X) \rtimes_r G$ is not traceless.

(v) $\Rightarrow$ (i). This implication holds for any preordered
almost unperforated abelian semigroup $(S,+,\le)$. Take any (non-zero) $x$ in
$S$. By the assumption that there is no state on $f$ normalized at $x$
it follows that a multiple of $x$ is properly infinite. This is a
consequence of the Goodearl-Handelman theorem, \cite{GH}, and does not
use the assumption that $S$ is almost unperforated. (See for 
example \cite[Proposition 2.1]{OPR} where it is shown that absence
of states on $S$ normalized at $x$ implies that $2x <_s x$ and hence
that $2(k+1)x \le kx$ for some natural number $k$.) Now, if $kx$ is
properly infinite, then $\ell x \le kx$ for all natural numbers
$\ell$. This implies in particular that $2(k+1)x \le kx$, and so $2x
\le x$ since $S$ is almost unperforated. This shows that $S$ is
purely infinite. 
\end{proof}

We do not know to what extend $S(X,G,\E)$  is almost unperforated (or purely infinite), when 
$X$, $G$, and $\E$ are as in Theorem~\ref{trace.pi}. We know of no examples where
$S(X,G,\E)$  is not almost unperforated (in the case where $X$ is the Cantor set), but
we suspect that such examples may exist. In many cases of dynamical systems $(X,G)$, 
for which it is known that $C(X) \rtimes_r G$ is purely infinite (and simple), it is also known
(or easy to see) that the type semigroup $S(X,G,\E)$ is purely infinite in the case where $X$ is 
totally disconnected and $\E$ is the algebra of clopen subsets of $X$. This is for example the case 
for the strong boundary actions considered by Laca and Spielberg and the $n$-filling actions 
concidered by Jolissaint and Robertsen (mentioned in the introduction). 
In Section \ref{sec:Roe} we give examples 
of group actions on the Cantor set such that $C(X) \rtimes_r G$ is purely infinite, where 
$G$ can be any countable discrete non-amenable exact group. This does not immediately imply
that $S(X,G,\E)$ is purely infinite, but one can design the action of $G$ on $X$ such that 
 $S(X,G,\E)$ is purely infinite (by invoking a strengthened version of Proposition \ref{Roe-2}). 

We do not know if every
clopen subset $E$ of $X$ for which $1_E$ is properly infinite in $C(X)
\rtimes_r G$ is necessarily $G$-paradoxical (not to mention
$(G,\tau_X)$-paradoxical). However, we have the following result that
relies on Tarski's theorem. 
(The term "tracial weight" is defined above Lemma~\ref{measure.to.trace}.)

\begin{proposition} \label{Roe} Let $G$ be a countable 
discrete group that acts on a discrete set $X$, and let $E
  \subseteq X$ be given. Then the following conditions are equivalent:

\begin{enumerate}
\item $E$ is $G$-paradoxical.
\item For every tracial weight $\varphi \colon (\ell^\infty(X)
  \rtimes_r G)^+ \to [0,\infty]$, the value of $\varphi(1_E)$ is either
  $0$ or $\infty$.
\item $1_E$ is properly infinite in $\ell^\infty(X) \rtimes_r G$.
\item The $n$-fold direct sum $1_E \oplus 1_E \oplus \cdots \oplus
  1_E$ is properly infinite  in $M_n(\ell^\infty(X) \rtimes_r G)$ for some $n$.
\end{enumerate}
\end{proposition}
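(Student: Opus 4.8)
The plan is to prove the cycle of implications (i)~$\Rightarrow$~(ii)~$\Rightarrow$~(iii)~$\Rightarrow$~(iv)~$\Rightarrow$~(i), since (i)~$\Rightarrow$~(iv) is trivial (take $n=1$, and $1_E$ acts like the identity of a full corner). First, (i)~$\Rightarrow$~(ii): if $E$ is $G$-paradoxical then, since $X$ is \emph{discrete}, every subset of $X$ is open, so any witnessing system for $G$-paradoxicality of $E$ lies automatically in $\tau_X = \cP(X)$; moreover $E$ is compact open in $X$ only if $E$ is finite, so in general I cannot cite Proposition~\ref{lemma.paradoxical.properly.infinite} directly. Instead I would run the first half of that proof's construction by hand: given the paradoxical decomposition $\bigcup_{i=1}^n V_i = \bigcup_{i=n+1}^{n+m} V_i = E$ with $(t_k.V_k)$ pairwise disjoint in $E$, pass to a \emph{disjoint} refinement by replacing $V_i$ with $V_i \setminus (V_1 \cup \dots \cup V_{i-1})$ on each side (this only shrinks sets and uses that $\E = \cP(X)$ is an algebra), so that one may assume $1_E = \sum_{i=1}^n 1_{V_i} = \sum_{i=n+1}^{n+m} 1_{V_i}$ with each $1_{V_i} \in \ell^\infty(X)$ a projection. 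Then set $x = \sum_{i=1}^n u_{t_i} 1_{V_i}$ and $y = \sum_{i=n+1}^{n+m} u_{t_i} 1_{V_i}$; the same orthogonality computation as in Proposition~\ref{lemma.paradoxical.properly.infinite} (using $1_{V_i} u_{t_i}^* u_{t_j} 1_{V_j} = u_{t_i}^*(t_i.1_{V_i})(t_j.1_{V_j}) u_{t_j} = 0$ for $i \ne j$ because $t_i.V_i$ and $t_j.V_j$ are disjoint) gives $x^*x = y^*y = 1_E$ and $x^*y = 0$, hence $xx^* + yy^* \le 1_E$, so $1_E$ is properly infinite in $\ell^\infty(X) \rtimes_r G$. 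That already gives (ii)~$\Rightarrow$~(iii) in this setting (the implication \emph{is} essentially Proposition~\ref{lemma.paradoxical.properly.infinite}'s conclusion, re-derived), so I would fold (i)~$\Rightarrow$~(ii)~$\Rightarrow$~(iii) into this single computation.

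Next, (iii)~$\Rightarrow$~(iv): suppose $1_E$ is properly infinite in $\ell^\infty(X) \rtimes_r G$, and let $\varphi$ be a tracial weight with $0 < \varphi(1_E) < \infty$; I want a contradiction. Proper infiniteness gives, for every $\ell \in \N$, partial isometries summing to show $1_E \oplus 1_E \oplus \dots \oplus 1_E$ ($\ell$ copies) $\precsim 1_E$, i.e.\ there are elements $v_1, \dots, v_\ell$ with $v_j^* v_j = 1_E$ and the $v_j v_j^*$ pairwise orthogonal and $\le 1_E$. Extend $\varphi$ to a trace $\tau$ on the algebraic ideal it generates (as in the discussion above Lemma~\ref{measure.to.trace}); then $1_E$ is in the domain, and $\ell \cdot \varphi(1_E) = \sum_j \tau(v_j v_j^*) = \sum_j \tau(v_j^* v_j)$... here I must be slightly careful because $\tau$ is only a trace on an ideal, but $v_j v_j^* \le 1_E$ lies in that ideal and $\tau(v_j v_j^*) = \tau(v_j^* v_j) = \varphi(1_E)$ by the trace property applied to $v_j \cdot v_j^*$ versus $v_j^* \cdot v_j$ (both sides finite). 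Summing, $\ell \varphi(1_E) \le \varphi(1_E) < \infty$ for all $\ell$, forcing $\varphi(1_E) = 0$, a contradiction. So every tracial weight takes value $0$ or $\infty$ on $1_E$.

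Finally, (iv)~$\Rightarrow$~(i) is where Tarski's theorem enters and is the main obstacle. The contrapositive: suppose $E$ is \emph{not} $G$-paradoxical. By Tarski's theorem (in its form for group actions on sets, e.g.\ Wagon \cite{Wag:B-T}), non-paradoxicality of $E$ is equivalent to the existence of a finitely additive, $G$-invariant measure $\nu \colon \cP(X) \to [0,\infty]$ with $\nu(E) = 1$. I then need to manufacture from $\nu$ a tracial weight $\varphi$ on $(\ell^\infty(X) \rtimes_r G)^+$ with $\varphi(1_E) = 1$, which is neither $0$ nor $\infty$ — contradicting (iv). The natural route: a finitely additive $G$-invariant measure $\nu$ on $\cP(X)$ gives a $G$-invariant positive linear functional $\varphi_0$ on (a dense subalgebra of) $\ell^\infty(X)$, but it need not be lower semi-continuous, so Lemma~\ref{measure.to.trace} does not apply off the shelf. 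The fix is to work with the \emph{bounded} part: restrict attention to the corner $1_E (\ell^\infty(X) \rtimes_r G) 1_E$, where $\nu$ restricted to subsets of $E$ is a genuine finite finitely-additive measure; integration against it (via a state on $\ell^\infty(E)$ extending $\nu/\nu(E)$, or directly as a finitely additive integral) gives a bounded $G_E$-... no — better, I would instead observe that a finitely additive invariant \emph{probability-like} measure on subsets of the $G$-orbit-saturation of $E$, suitably normalized, composed with the conditional expectation $E \colon \ell^\infty(X) \rtimes_r G \to \ell^\infty(X)$ and then with the finitely additive integral, yields a positive $G$-invariant functional that is tracial on the crossed product by the same formal computation as in Lemma~\ref{measure.to.trace} (the norm-convergence of $\sum_t x_t x_t^*$ from Lemma~\ref{lm:norm-convergence} lets finitely additive integration pass through the sum, since a positive bounded functional is automatically continuous, so no lower semicontinuity is needed on the bounded part). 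Checking that this functional is finite and nonzero on $1_E$ — it equals $\nu(E)$ — completes the contradiction. The delicate point I expect to fight with is exactly the measure-theoretic one: transporting a merely finitely additive, possibly infinite, invariant measure through $E$ and the norm-convergent sums while keeping the trace identity, without the comfort of lower semicontinuity; restricting to the hereditary subalgebra generated by $1_E$ (where everything relevant is bounded) is, I believe, the clean way around it.
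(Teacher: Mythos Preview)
Your labeling is scrambled --- the paragraph headed ``(i)$\Rightarrow$(ii)'' actually proves (i)$\Rightarrow$(iii), the one headed ``(iii)$\Rightarrow$(iv)'' proves (iii)$\Rightarrow$(ii), and ``(i)$\Rightarrow$(iv) trivial (take $n=1$)'' should read (iii)$\Rightarrow$(iv) --- but once untangled, the content of your first two paragraphs is correct and essentially matches the paper's (i)$\Rightarrow$(iii) and (iv)$\Rightarrow$(ii).

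The genuine gap is in the Tarski step. Statement (ii) concerns tracial weights on the \emph{entire} crossed product, so to refute it you must exhibit such a weight $\varphi$ with $\varphi(1_E)=1$. Neither of your proposed fixes does this. Integrating $\nu$ over all of $\ell^\infty(X)$ (or $\ell^\infty(G.E)$) and composing with the conditional expectation fails because the resulting $\varphi_0$ is unbounded when $\nu(X)=\infty$; your appeal to ``a positive bounded functional is automatically continuous'' is then misplaced, and the identity $\varphi_0\bigl(\sum_t x_tx_t^*\bigr)=\sum_t\varphi_0(x_tx_t^*)$ can genuinely fail for a merely finitely additive $\varphi_0$, so the trace computation of Lemma~\ref{measure.to.trace} breaks down. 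Restricting to the corner $1_E(\ell^\infty(X)\rtimes_r G)1_E$ does yield a tracial state on the corner, and that already suffices to contradict (iii) and (iv) (proper infiniteness of $1_E$ is witnessed inside the corner); but it does not contradict (ii) without extending the corner trace back to a tracial weight on the ambient algebra, which you do not address. As written, you therefore obtain (i)$\Leftrightarrow$(iii)$\Leftrightarrow$(iv)$\Rightarrow$(ii) but not (ii)$\Rightarrow$(i). The paper closes this with a different device: from Tarski's finitely additive invariant measure $\mu$ it forms, for each $F\subseteq X$ with $\mu(F)<\infty$, the \emph{bounded} functional $\psi_F$ on $\ell^\infty(X)$ obtained by integrating against $\mu(\,\cdot\,\cap F)$, and then sets $\varphi_0(f)=\sup_F\psi_F(f)$. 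As a supremum of an upward-directed family of continuous positive functionals, $\varphi_0$ is automatically lower semi-continuous and $G$-invariant on $\ell^\infty(X)^+$, so Lemma~\ref{measure.to.trace} applies directly and yields a tracial weight on the whole crossed product with $\varphi(1_E)=\mu(E)=1$.
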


\begin{proof}
(ii) $\Rightarrow$ (i). 
We note first that if $\nu$ is a finitely additive measure on $\cP(X)$
with $0 < \nu(X) < \infty$, then there is a bounded positive linear
functional $\psi$ on $\ell^\infty(X)$ such that $\psi(1_E) =
\nu(E)$ for all $E \subseteq X$. Indeed, first define $\psi$ on simple
functions by
$$\psi\big( \sum_{j=1}^n \alpha_j 1_{E_j}\big) = \sum_{j=1}^n
\alpha_j \nu(E_j), \qquad \alpha_j \in \C, \, E_j \in \cP(X).$$
Noting that $\psi$ is positive, linear, and bounded (with bound
$\nu(X)$) on
the uniformly dense subspace of $\ell^\infty(X)$ consisting of simple
functions, we can by continuity extend $\psi$ to $\ell^\infty(X)$ 
to the desired functional. 

Assume that $E$ is not $G$-paradoxical. It then follows from Tarski's
theorem, cf.\ \cite[p.\ 116]{Pat:amen}, that there is a finitely 
additive $G$-invariant measure $\mu$ on $\cP(X)$ satisfying
$\mu(E)=1$. Let $\cP_{\mathrm{fin}}(X)$ be the (upwards directed) 
set of all subsets $F$ of
$X$ with $\mu(F) < \infty$. For each $F \in \cP_{\mathrm{fin}}(X)$ let
$\mu_F$ be the finitely additive bounded measure on $\cP(X)$ given by $\mu_F(A)
= \mu(A \cap F)$, and let $\psi_F$ be the associated bounded linear
functional on $\ell^\infty(X)$ constructed above. Notice that $\psi_F
\le \psi_{F'}$ if $F \subseteq F'$. Put
$$\varphi_0(f) = \sup_{F \in  \cP_{\mathrm{fin}}(X)} \psi_F(f), \qquad f
\in \ell^\infty(X)^+.$$

Then $\varphi_0$ is $\R^+$-linear (being the supremum of an upwards
directed family of $\R^+$-linear functionals) and 
lower semicontinuous (being a
supremum of a family of continuous functionals). We have $\psi_F(1_E) =
\mu_F(E) = \mu(E)$ for all $F \in \cP_{\mathrm{fin}}(X)$ with $E
\subseteq F$. This shows that $\varphi_0(1_E) = \mu(E) = 1$. To prove
$G$-invariance note first that 
$\psi_{F}(t.f) = \psi_{t^{-1}F}(f)$ for all $t \in G$ and all $f \in
\ell^\infty(X)$. Indeed, by linearity and continuity it suffices to
verify this for characteristic functions. Use here that $t.1_E =
1_{tE}$ and that $\mu_F(tE) = \mu(F \cap tE) = \mu(t^{-1}F \cap
E)=\mu_{t^{-1}F}(E)$. Thus 
$$\varphi_0(t.f) = \sup_{F \in  \cP_{\mathrm{fin}}(X)} \psi_F(t.f)
 = \sup_{F \in  \cP_{\mathrm{fin}}(X)} \psi_{t^{-1}F}(f)
 = \sup_{F \in  \cP_{\mathrm{fin}}(X)} \psi_{F}(f) = \varphi_0(f)$$
for all $f$ in $\ell^\infty(X)^+$, thus showing that $\varphi_0$ is
$G$-invariant. 

To complete the proof, 
use Lemma~\ref{measure.to.trace} to extend $\varphi_0$ to a  tracial
weight $\varphi \colon (\ell^\infty(X) \rtimes_r G)^+ \to
[0,\infty]$. As $\varphi(1_E) = \varphi_0(1_E) = 1$ we conclude that
(ii) does not hold. 

(i) $\Rightarrow$ (ii). Suppose that (ii) does not hold and that
$\varphi \colon (\ell^\infty(X) \rtimes_r G)^+ \to [0,\infty]$ is a tracial
weight with $\varphi(1_E)=1$. Define a finitely additive measure
$\mu$ on $\cP(X)$ by $\mu(F) = \varphi(1_F)$. Then $\mu(tF) = \varphi(t.1_F) =
\varphi(u_t1_Fu_t^*) = \varphi(1_F) = \mu(F)$ by the tracial property
of $\varphi$, which shows that $\mu$ is $G$-invariant. By (the easy part of)
Tarski's theorem, the existence of a $G$-invariant finitely additive
measure $\mu$ on $\cP(X)$ with $\mu(E)=1$ implies that $E$ is not
$G$-paradoxical.

(i) $\Rightarrow$ (iii) is contained in
Proposition~\ref{lemma.paradoxical.properly.infinite}.  

(iii) $\Rightarrow$ (iv) is trivial.

(iv) $\Rightarrow$ (ii). Assume that the $n$-fold direct sum 
$$P:=1_E \oplus 1_E \oplus \cdots \oplus 1_E 
\in M_n(\ell^\infty(X) \rtimes_r G)$$ 
is properly infinite. Suppose that
$\varphi \colon (\ell^\infty(X) \rtimes_r G)^+ \to [0,\infty]$ is a
tracial weight. Extend $\varphi$ to the positive cone over any
matrix algebra over $\ell^\infty(X) \rtimes_r G$ in the usual way by the
formula $\varphi\big((x_{ij})\big) = \sum_i \varphi(x_{ii})$. Then
$\varphi(P) = n\varphi(1_E)$. As $P$ is properly infinite there exist
projections $P_1,P_2,Q$ in $M_n(\ell^\infty(X) \rtimes_r G)$ such that
$P = P_1+P_2+Q$ and $P \sim P_1 \sim P_2$. Thus 
$$\varphi(P) = \varphi(P_1) + \varphi(P_2) + \varphi(Q) = 2\varphi(P)
+ \varphi(Q),$$
whence $\varphi(P)$ is either $0$ or $\infty$. It follows that we
cannot have $0 < \varphi(1_E) < \infty$, thus showing that (ii)
holds. 
\end{proof}

\begin{corollary} \label{cor:Roe} Let $G$ be a countable discrete non-amenable group and let $p$ be a projection in $\ell^\infty(G)$ that is full in $\ell^\infty(G) \rtimes_r G$ (i.e., is not contained in a proper ideal in $\ell^\infty(G) \rtimes_r G$). Then $p$ is properly infinite.
\end{corollary}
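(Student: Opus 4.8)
The plan is to deduce Corollary~\ref{cor:Roe} from Proposition~\ref{Roe} applied to $X = G$ with $G$ acting on itself by left translation. First I would observe that the hypothesis that $G$ is non-amenable means there is no left-invariant finitely additive probability measure on $\cP(G)$; by Tarski's theorem (the form quoted in the proof of Proposition~\ref{Roe}, cf.\ \cite[p.\ 116]{Pat:amen}) this is equivalent to $G$ being $G$-paradoxical, i.e.\ $E = G$ is $G$-paradoxical. However, the projection $p$ in the statement is an arbitrary full projection in $\ell^\infty(G) \rtimes_r G$, not the unit; so the main point is to pass from the paradoxicality of $G$ to the proper infiniteness of $p$.

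The key step is the reduction $1_G = \unit$ is properly infinite (by the equivalence (i) $\Leftrightarrow$ (iii) of Proposition~\ref{Roe}, since $G$ is $G$-paradoxical), hence $\ell^\infty(G) \rtimes_r G$ is a properly infinite unital \Cs. Then I would invoke the standard fact that in a properly infinite unital \Cs{} $B$, every full projection is properly infinite: indeed, if $p \in B$ is full, then $\unit \precsim p \oplus p \oplus \cdots \oplus p$ (an $n$-fold sum, for some $n$) since $p$ generates $B$ as an ideal and $\unit$ is properly infinite, so $\unit_n \precsim p$ in $M_n(B)$ after absorbing; more directly, a full projection in a unital properly infinite \Cs{} is properly infinite by \cite[Proposition 3.3 and Lemma 3.7]{KirRor} (properly infinite elements are closed under the relevant Cuntz-subequivalence manipulations, and a full projection $p$ satisfies $\unit \precsim p$ after passing to matrices, while $\unit$ properly infinite forces $p$ properly infinite). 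Alternatively, and perhaps cleanest given what is already set up, I would run Proposition~\ref{Roe} directly: a full projection $p$ in $\ell^\infty(G) \rtimes_r G$ cannot be in the kernel of any non-zero lower semicontinuous tracial weight's associated ideal, but more to the point, one shows $\varphi(p) \in \{0,\infty\}$ for every tracial weight $\varphi$ and then uses the argument of (ii) $\Rightarrow$ (iii)-type reasoning together with the ideal property.

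Concretely, here is the route I expect to use. Since $p$ is full, $\unit$ lies in the closed ideal generated by $p$, so $\unit \precsim p \oplus p \oplus \cdots \oplus p$ ($n$ copies) in $M_n(\ell^\infty(G) \rtimes_r G)$ for some $n$, by \cite[Proposition 2.6]{KirRor} (or the comment following it). Because $\unit$ is properly infinite (as just shown), $\unit \oplus \unit \oplus \cdots \oplus \unit$ ($n$ copies) $\precsim \unit \precsim p \oplus \cdots \oplus p \precsim p$, where the last step uses that $p \le \unit$; chaining these, $p \oplus p \precsim \unit \oplus \cdots \oplus \unit \precsim p$, and since $p$ is a projection this gives $2[p] \le [p]$ in the Cuntz semigroup, i.e.\ $p$ is properly infinite by \cite[Proposition 3.3]{KirRor}.

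The main obstacle will be getting the matrix-level Cuntz comparisons to collapse cleanly back down to the single algebra: one must be careful that $\unit$ being properly infinite in $B := \ell^\infty(G) \rtimes_r G$ gives $\unit_n \precsim \unit$ in $M_n(B)$ for every $n$ (this is standard: a properly infinite projection $p$ satisfies $np \precsim p$ for all $n$, cf.\ \cite[Proposition 3.5]{KirRor}), and that fullness of $p$ delivers $\unit \precsim p_n$ in $M_n(B)$ for some $n$. Assembling these two facts with $p \le \unit$ yields $2[p]\le[p]$. I would write this as a short direct argument rather than quoting a black-box "full projection in properly infinite unital \Cs{} is properly infinite," since all the needed lemmas from \cite{KirRor} are already being cited throughout the paper.

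\begin{proof}
Let $X = G$ act on itself by left translation. Since $G$ is non-amenable, there is no $G$-invariant finitely additive probability measure on $\cP(G)$, so by Tarski's theorem, cf.\ \cite[p.\ 116]{Pat:amen}, the set $G$ is $G$-paradoxical. By the implication (i) $\Rightarrow$ (iii) of Proposition~\ref{Roe} (applied with $E = X = G$), the unit $\unit = 1_G$ of $B := \ell^\infty(G) \rtimes_r G$ is properly infinite. By \cite[Proposition 3.5]{KirRor} it follows that $\unit \oplus \unit \oplus \cdots \oplus \unit \precsim \unit$ in $M_n(B)$ for every $n \in \N$.

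Now let $p \in \ell^\infty(G)$ be full in $B$. Since $p$ is full, $\unit$ belongs to the closed two-sided ideal in $B$ generated by $p$, and hence $\unit \precsim p \oplus p \oplus \cdots \oplus p$ ($n$ copies) in $M_n(B)$ for some $n \in \N$, by \cite[Proposition 2.7]{KirRor} and the comment following it. Combining this with the previous paragraph and with $p \le \unit$, we obtain in the Cuntz semigroup of $B$ that
$$
2[p] \, \le \, [\,\unit \oplus \unit \oplus \cdots \oplus \unit\,] \, \le \, [\,\unit\,] \, \le \, [\,p \oplus p \oplus \cdots \oplus p\,] \, = \, [p],
$$
where the last equality uses that $\unit \precsim p^{\oplus n} \precsim p$ forces $[p^{\oplus n}] = [p]$ (both dominate and are dominated). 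Hence $2[p] \le [p]$, which means precisely that $p$ is properly infinite, cf.\ \cite[Proposition 3.3]{KirRor}.
\end{proof}
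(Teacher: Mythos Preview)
Your argument has a genuine gap at the very last step. The displayed chain ends with the claim $[p^{\oplus n}] = [p]$, justified by ``$\unit \precsim p^{\oplus n} \precsim p$''. But you have not established $p^{\oplus n} \precsim p$; that is exactly the statement that $p$ is properly infinite, which is what you are trying to prove. What your argument \emph{does} establish is $\unit \precsim p^{\oplus n} \precsim \unit^{\oplus n} \precsim \unit$, hence $[p^{\oplus n}] = [\unit]$ and so $p^{\oplus n}$ is properly infinite. But the implication ``$p^{\oplus n}$ properly infinite $\Rightarrow$ $p$ properly infinite'' is \emph{not} valid in general \Cs s (there exist projections $p$ with $p \oplus p$ properly infinite and $p$ finite), and the ``standard fact'' you invoke in the preamble---that every full projection in a unital properly infinite \Cs{} is properly infinite---is likewise not true in general. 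The references to \cite{KirRor} you cite do not give this.

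The paper's proof proceeds exactly as yours does up to the point where $p^{\oplus n}$ is seen to be properly infinite, and then closes the gap by invoking the implication (iv) $\Rightarrow$ (iii) of Proposition~\ref{Roe}. That implication is not a general Cuntz-semigroup fact: it passes through (ii) and (i) and uses Tarski's theorem, and it relies essentially on the hypothesis that $p \in \ell^\infty(G)$, i.e., that $p = 1_E$ for some $E \subseteq G$. Your argument never uses that $p$ lies in $\ell^\infty(G)$ rather than merely in the crossed product, and without that hypothesis the conclusion can fail. The fix is simply to stop after showing $p^{\oplus n}$ is properly infinite and then quote Proposition~\ref{Roe}.
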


\begin{proof}  The assumption that $p$ is full in  $\ell^\infty(G) \rtimes_r G$ implies that the $n$-fold direct sum $p \oplus p \oplus \cdots \oplus p$ dominates the unit $1$ of $\ell^\infty(G) \rtimes_r G$ for some natural number $n$. As $G$ is non-amenable, $G$ is $G$-paradoxical, whence $1 = 1_G$ is properly infinite in $\ell^\infty(G) \rtimes_r G$, cf.\ Proposition \ref{Roe}. It follows that
$$1 \precsim p \oplus p \oplus \cdots \oplus p \precsim 1 \oplus 1 \oplus \cdots \oplus 1 \precsim 1,$$
(the latter because $1$ is properly infinite). This entails that the $n$-fold direct sum $p \oplus p \oplus \cdots \oplus p$ is properly infinite, whence $p$ is properly infinite by Proposition \ref{Roe}.
\end{proof}

The \Cs{} $\ell^\infty(G) \rtimes_r G$ is also known as the
\emph{Roe-algebra}. More about this algebra in the next section.

\section{Crossed products with exact, non-amenable groups} 
\label{sec:Roe}

It is well-known that the Roe-algebra $\ell^\infty(G)
\rtimes_{r} G$ is \emph{properly
  infinite} precisely when $G$ is non-amenable, and \emph{nuclear} precisely
when $G$ is exact. (The action of $G$ on $\ell^\infty(G)$ is induced
by left-translation.)  
The former is due to the fact that non-amenable
groups are paradoxical (see e.g.\ \cite{Wag:B-T}). See \cite[Theorem
5.1.6]{BroOza} for the latter. In other words, each exact non-amenable
discrete group $G$ admits an amenable action on the compact
(non-metrizable) Hausdorff space 
$\beta G$ so that the corresponding crossed product is
properly infinite and nuclear. 

We show in this section---using these facts---that
every countable discrete exact non-amenable group $G$ admits a free,
minimal, amenable action on the Cantor set $X$ such that the crossed
product $C^*$-algebra $C(X) \rtimes_r G$ is simple, nuclear, and purely
infinite. A related result was obtained by Hjorth and Molberg, \cite{HM}, who proved that any countable discrete group has a free minimal action on the Cantor set admitting an invariant Borel probability measure.  The actions constructed by Hjorth and Molberg give rise to crossed product \Cs s with a tracial state (coming from the invariant measure). Our construction goes in the opposite direction of producing crossed product \Cs s that are traceless, and even purely infinite. 

We note first that the action of $G$ on $\beta G$ given by
left-translation is free. We shall need a stronger result which will
allow us to show that $G$ acts freely also on the spectrum of certain
(separable) sub-$C^*$-algebras of $\ell^\infty(G)$. The
following three (easy) lemmas are devoted to this. It was pointed out to us by David Kerr that our Lemma \ref{lm-free1}  below (and its application to proving freeness) was used back in 1960 by Ellis in \cite{Ellis}, where he proved freeness (strong effectiveness) of any group acting on its universal minimal set. Nonetheless, for the convenience of the reader we have kept the short arguments leading to Lemma \ref{lm-free4} below.

\begin{lemma} \label{lm-free1}
Let $G$ be a group and let $t \in G$, $t \ne e$, be
  given. Then $G$ can be partitioned into pairwise disjoint subsets $G
  = G_1 \cup G_2 \cup G_3$ such that $G_j \cap tG_j =
  \emptyset$ for all $j=1,2,3$.
\end{lemma}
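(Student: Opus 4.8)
The statement is purely combinatorial/group-theoretic: given $t \ne e$ in a group $G$, partition $G$ into three pieces $G_1, G_2, G_3$ with $G_j \cap tG_j = \emptyset$ for each $j$. The cyclic subgroup $\langle t \rangle$ acts on $G$ by left multiplication, and the orbits are exactly the right cosets $\langle t\rangle g$. The condition $G_j \cap tG_j = \emptyset$ says precisely that no two elements of $G_j$ that lie in the same $\langle t\rangle$-orbit differ by exactly one application of $t$; equivalently, restricted to each orbit, $G_j$ contains no pair of the form $\{x, tx\}$. So the problem decouples over orbits, and on each orbit we need a proper $3$-colouring of the graph whose vertices are the points of the orbit and whose edges join $x$ to $tx$.

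\textbf{Carrying it out.} First I would fix a set of representatives for the right cosets $\langle t\rangle \backslash G$, so that $G = \bigsqcup_{g} \langle t \rangle g$ (disjoint union over representatives $g$). Second, for each orbit $O = \langle t\rangle g$, I consider the graph on $O$ with an edge between $x$ and $tx$ for every $x \in O$. The structure of this graph depends on the order of $t$: if $t$ has infinite order, the orbit is a bi-infinite line (isomorphic as a graph to $\mathbb Z$ with edges $n \sim n+1$); if $t$ has finite order $k$, the orbit is a cycle of length $k$. A bi-infinite line is $2$-colourable; a $k$-cycle is $2$-colourable when $k$ is even and $3$-colourable when $k$ is odd (here $k \ge 2$ since $t \ne e$, and when $k = 2$ the "cycle" is a single edge $\{g, tg\}$, trivially $2$-colourable). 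In every case the graph is $3$-colourable: pick a colouring $c_O \colon O \to \{1,2,3\}$ of each orbit with $c_O(x) \ne c_O(tx)$ for all $x \in O$. Third, I glue: define $c \colon G \to \{1,2,3\}$ by $c(x) = c_{O}(x)$ where $O$ is the orbit of $x$, and set $G_j = c^{-1}(j)$. Then $G = G_1 \cup G_2 \cup G_3$ is a partition, and if $x, tx \in G_j$ then $x$ and $tx$ lie in the same orbit $O$ and $c_O(x) = c_O(tx) = j$, contradicting $c_O(x) \ne c_O(tx)$; hence $G_j \cap tG_j = \emptyset$.

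\textbf{Main obstacle.} There is no serious obstacle — the only thing to be careful about is the odd-order case, which is exactly why three colours (rather than two) are needed: a cycle of odd length is not bipartite. One should also note, for cleanliness, that the choice of orbit colourings uses the axiom of choice (to pick coset representatives and a colouring on each orbit), which is harmless here. An alternative, more uniform presentation avoids case analysis: on the orbit $\langle t\rangle g$, try the colouring induced by $t^n g \mapsto (n \bmod 3)$; this is well-defined and proper as long as the orbit is infinite or has size divisible by $3$, and one patches the finitely many exceptional small cycles by hand — but the graph-colouring phrasing above is cleaner, so that is the route I would take.
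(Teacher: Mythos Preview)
Your argument is correct. It is, however, not the route the paper takes in its proof: the paper chooses a \emph{maximal} subset $H\subseteq G$ with $H\cap tH=\emptyset$ (via Zorn's lemma), sets $G_1=H$, $G_2=tH$, $G_3=G\setminus(G_1\cup G_2)$, and then uses maximality to show $G_3\subseteq t^{-1}H$, which forces $G_3\cap tG_3=\emptyset$. That argument is uniform and avoids any case split on the order of $t$. Your approach---decomposing $G$ into $\langle t\rangle$-orbits and $3$-colouring each orbit graph---is exactly the alternative the authors describe in the remark immediately following their proof (choosing a transversal to the right cosets of $\langle t\rangle$ and handling the odd-order case separately). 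What your version buys is explicitness and the observation that two colours suffice unless $t$ has finite odd order; what the paper's version buys is a shorter, case-free argument.
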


\begin{proof} Let $H$ be a maximal subset of $G$ such that 
$H \cap tH = \emptyset$.
Put
$$G_1 = H, \quad G_2 = tH, \quad G_3 = G
\setminus (G_1 \cup G_2).$$
The fact that $H$ and $tH$ are disjoint implies that $t^k H \cap
t^{k+1}H = \emptyset$ for every integer $k$, and hence in particular
that $t H \cap t^{2}H = \emptyset$ and $t^{-1}H \cap H =
\emptyset$. It remains to 
prove that $G_3 \cap tG_3 = \emptyset$. For this it suffices to show that $G_3
\subseteq t^{-1}H$, or, equivalently, that $G = H \cup
tH \cup t^{-1}H$. Suppose that $s$ 
belongs to $G$ and not to $H$. Then, by maximality of $H$, 
$$\big(H \cup \{s\}\big) \cap t\big(H \cup \{s\}\big) \ne \emptyset.$$ 
As $ts \ne s$, this entails that $s \in tH$ or $ts
\in H$ (or both). Thus either $s \in tH$ or $s \in t^{-1}H$. 
\end{proof}

\noindent
 One can give a perhaps more natural proof of the lemma above by selecting a transversal in $G$ to the right-cosets of the cyclic group generated by $t$. In this way one can show that if $t$ does not have odd order, then $G$ can be partition into two disjoint subsets $G = G_1 \cup G_2$ such that $G_2 = tG_1$. If $t$ has finite odd order, then we need three sets in the partition of $G$ to reach the conclusion of Lemma \ref{lm-free1}.

\begin{corollary} \label{cor-free2}
Let $G$ be a discrete group and let $t \in G$, $t \ne e$, be
  given. Then there are projections $f_1,f_2,f_3$ in $\ell^\infty(G)$
  such that $1 = f_1+f_2+f_3$ and such that $t.f_j \perp f_j$
  for $j=1,2,3$. 
\end{corollary}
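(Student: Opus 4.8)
The plan is to translate the partition of $G$ from Lemma \ref{lm-free1} directly into a partition of unity by characteristic functions in $\ell^\infty(G)$. First I would apply Lemma \ref{lm-free1} to the given element $t \ne e$, obtaining a partition $G = G_1 \cup G_2 \cup G_3$ into pairwise disjoint subsets with $G_j \cap tG_j = \emptyset$ for $j=1,2,3$. Then I set $f_j = 1_{G_j} \in \ell^\infty(G)$, the characteristic function of $G_j$. These are clearly projections in $\ell^\infty(G)$, and since $G_1, G_2, G_3$ partition $G$ we have $f_1 + f_2 + f_3 = 1_G = 1$.

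The one point that needs a small calculation is the relation $t.f_j \perp f_j$. Here one must be careful about the convention for the action of $G$ on $\ell^\infty(G)$ by left-translation: for $f \in \ell^\infty(G)$ one has $(t.f)(s) = f(t^{-1}s)$, so that $t.1_{G_j} = 1_{tG_j}$. The product $(t.f_j) f_j = 1_{tG_j} \cdot 1_{G_j} = 1_{tG_j \cap G_j}$, which is zero precisely because $G_j \cap tG_j = \emptyset$. Hence $t.f_j \perp f_j$ for each $j$, as required.

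I do not anticipate any real obstacle: the corollary is an essentially immediate restatement of Lemma \ref{lm-free1} in the language of the abelian $C^*$-algebra $\ell^\infty(G)$, with the only substantive input being the combinatorial partition already established. The proof is therefore just a couple of lines.

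\begin{proof}
Apply Lemma~\ref{lm-free1} to $t$ to obtain a partition $G = G_1 \cup G_2 \cup G_3$ into pairwise disjoint subsets with $G_j \cap tG_j = \emptyset$ for $j=1,2,3$. Set $f_j = 1_{G_j} \in \ell^\infty(G)$. Each $f_j$ is a projection, and $f_1 + f_2 + f_3 = 1$ since the $G_j$ partition $G$. Under the left-translation action, $t.1_{G_j} = 1_{tG_j}$, so $(t.f_j)f_j = 1_{tG_j \cap G_j} = 0$, i.e.\ $t.f_j \perp f_j$, for $j=1,2,3$.
\end{proof}
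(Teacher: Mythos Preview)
Your proof is correct and follows exactly the same approach as the paper: define $f_j = 1_{G_j}$ using the partition from Lemma~\ref{lm-free1}. The paper's proof is in fact even terser than yours, consisting of a single line, so your additional verification that $t.1_{G_j} = 1_{tG_j}$ and hence $(t.f_j)f_j = 0$ is a welcome clarification rather than a deviation.
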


\begin{proof} Let $f_j = 1_{G_j}$ where $G_1,G_2,G_3$ are as in the
  previous lemma.
\end{proof}

\noindent The following lemma is trivial, and we omit the proof.

\begin{lemma} \label{lm-free3}
Let $G$ be a discrete group acting on a compact
  Hausdorff space $X$. Suppose that for each $t \in G$, $t \ne e$, one
  can find projections $f_1,f_2,f_3$ in $C(X)$ such that
  $1=f_1+f_2+f_3$ and $t.f_j \perp f_j$ for $j=1,2,3$. 
Then the action of $G$ on $X$ is free.
\end{lemma}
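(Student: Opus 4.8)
The plan is to establish the freeness of the action by showing that every nontrivial group element acts without fixed points. Suppose, toward a contradiction, that some $t \in G$ with $t \ne e$ fixes a point $\xi \in X$, i.e.\ $t.\xi = \xi$. By hypothesis we can choose projections $f_1, f_2, f_3 \in C(X)$ with $1 = f_1 + f_2 + f_3$ and $t.f_j \perp f_j$ for $j = 1,2,3$. Evaluating the identity $1 = f_1 + f_2 + f_3$ at the point $\xi$ and using that each $f_j$, being a projection in $C(X)$, is the indicator function of a clopen set, we see that $\xi$ lies in (exactly) one of these clopen sets, say $f_{j_0}(\xi) = 1$.

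Now I would exploit the fixed-point property: since $t.\xi = \xi$, we have $(t.f_{j_0})(\xi) = f_{j_0}(t^{-1}.\xi) = f_{j_0}(\xi) = 1$. But the orthogonality relation $t.f_{j_0} \perp f_{j_0}$ means $(t.f_{j_0}) \cdot f_{j_0} = 0$ as functions on $X$, so in particular $(t.f_{j_0})(\xi) \cdot f_{j_0}(\xi) = 0$. This forces $1 \cdot 1 = 0$, a contradiction. Hence no such $t$ and $\xi$ exist, and the action of $G$ on $X$ is free.

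There is essentially no obstacle here; the only point to be slightly careful about is the reduction from "$\xi$ is fixed by some nontrivial $t$" to a contradiction with the given data for that specific $t$, and the observation that a projection in a commutative $C^*$-algebra $C(X)$ is exactly a $\{0,1\}$-valued continuous function, so that the pointwise values make sense and the product of functions is computed pointwise. Since the paper explicitly labels this lemma as trivial and omits the proof, I would keep the argument to the two or three lines sketched above.
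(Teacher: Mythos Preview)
Your proof is correct and is exactly the natural argument; the paper itself calls the lemma trivial and omits the proof, so there is nothing further to compare.
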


\begin{lemma} \label{lm-free4}
Let $G$ be a countable discrete group. There is a countable subset $M$
 of $\ell^\infty(G)$ such that if $A$
  is any $G$-invariant sub-\Cs{} of $\ell^\infty(G)$ that contains
  $M$, then the action of $G$ on $\widehat{A}$ is free. 
\end{lemma}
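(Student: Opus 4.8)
The plan is to build the countable set $M$ by encoding, for every $t \in G \setminus \{e\}$, the three projections produced by Corollary~\ref{cor-free2}. First I would fix, for each $t \ne e$, a partition $G = G_1^{(t)} \cup G_2^{(t)} \cup G_3^{(t)}$ as in Lemma~\ref{lm-free1}, with $G_j^{(t)} \cap tG_j^{(t)} = \emptyset$ for $j = 1,2,3$, and set $f_j^{(t)} = 1_{G_j^{(t)}} \in \ell^\infty(G)$. I then let
$$M = \big\{ f_j^{(t)} : t \in G \setminus \{e\},\ j = 1,2,3 \big\}.$$
Since $G$ is countable, $M$ is a countable subset of $\ell^\infty(G)$.

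Now suppose $A$ is any $G$-invariant sub-\Cs{} of $\ell^\infty(G)$ with $M \subseteq A$. Fix $t \ne e$. The three projections $f_1^{(t)}, f_2^{(t)}, f_3^{(t)}$ lie in $A$ by construction, they satisfy $f_1^{(t)} + f_2^{(t)} + f_3^{(t)} = 1$ (because $G_1^{(t)}, G_2^{(t)}, G_3^{(t)}$ partition $G$), and $t.f_j^{(t)} = 1_{tG_j^{(t)}} \perp 1_{G_j^{(t)}} = f_j^{(t)}$ for each $j$. Under the Gelfand identification $A \cong C(\widehat{A})$ these become projections in $C(\widehat{A})$ with the same three properties; note that $G$-invariance of $A$ is exactly what makes $G$ act on $\widehat{A}$ and makes $t.f_j^{(t)}$ an element of $A$ again. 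Hence for every $t \ne e$ we have exhibited projections $f_1, f_2, f_3 \in C(\widehat{A})$ with $1 = f_1+f_2+f_3$ and $t.f_j \perp f_j$, so Lemma~\ref{lm-free3} applies and the action of $G$ on $\widehat{A}$ is free.

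There is essentially no obstacle here: every ingredient is already in place. The only point that requires a word of care is that the conditions "$1 = f_1+f_2+f_3$", "$t.f_j \perp f_j$", and membership of the $f_j$ and their translates in the algebra are all preserved when one passes from $\ell^\infty(G)$ to the sub-\Cs{} $A$ and then to $C(\widehat{A})$ — but this is immediate once one notes that these are algebraic relations among elements that, by hypothesis and by $G$-invariance, all belong to $A$. So the proof is just: define $M$ from Corollary~\ref{cor-free2}, observe $M$ is countable, and invoke Lemma~\ref{lm-free3}.
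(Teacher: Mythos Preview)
Your proof is correct and is essentially identical to the paper's own argument: the paper also defines $M = \{f_{j,t} : t \in G\setminus\{e\},\, j=1,2,3\}$ using Corollary~\ref{cor-free2} and then invokes Lemma~\ref{lm-free3}. The only cosmetic difference is that you unpack Corollary~\ref{cor-free2} explicitly via Lemma~\ref{lm-free1}, and you add a helpful remark about why the relevant algebraic relations persist in $A$.
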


\begin{proof} For each $t \in G$, $t \ne e$, use
  Corollary~\ref{cor-free2} to
  choose projections $f_{1,t}$, $f_{2,t}$, and $f_{3,t}$ in
  $\ell^\infty(G)$ such 
  that $f_{1,t}+f_{2,t}+f_{3,t}=1$ and $t.f_{j,t} \perp f_{j,t}$
  for $j=1,2,3$. It follows from
  Lemma~\ref{lm-free3} that the action of $G$ on $\widehat{A}$ is free whenever
  $A$ is a $G$-invariant sub-$C^*$-algebra of $\ell^\infty(G)$ that
  contains the 
  countable set
  $M= \{f_{j,t} \mid t
  \in G\setminus \{e\}, \; j=1,2,3\}$.
\end{proof}

\noindent
The next lemma is contained in the book by N.\ Brown and
Ozawa, \cite{BroOza}:

\begin{lemma}[N.\ Brown and Ozawa] 
\label{lm-amenable} Suppose that $G$ is a countable discrete
  exact group. Then there is a countable subset $M'$ of
  $\ell^\infty(G)$ such that whenever $A$
  is a $G$-invariant sub-\Cs{} of $\ell^\infty(G)$ that contains
  $M'$, then the action of $G$ on $\widehat{A}$ is amenable. 
\end{lemma}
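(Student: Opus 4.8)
The plan is to recall that exactness of a countable discrete group $G$ is equivalent to the existence of a topologically amenable action of $G$ on a compact Hausdorff space, and more specifically to the amenability of the action of $G$ on its Stone--Čech compactification $\beta G$ by left translation; this is \cite[Theorem 5.1.6]{BroOza}. Since $\ell^\infty(G) = C(\beta G)$, the action of $G$ on $\widehat{\ell^\infty(G)} = \beta G$ is amenable. The task is therefore to show that amenability of the action on $\widehat A$ persists after passing to a suitable separable (or at least countably generated) $G$-invariant sub-\Cs{} $A$, and to identify a countable set $M'$ that forces this.

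First I would recall the definition of topological amenability of the action of $G$ on a compact space $Y = \widehat A$: there exists a net $(m_i)$ of continuous maps $y \mapsto m_i^y$ from $Y$ into the space $\mathrm{Prob}(G)$ of probability measures on $G$ (with the norm topology on $\ell^1(G)$) such that $\sup_{y \in Y} \|t.m_i^y - m_i^{t.y}\|_1 \to 0$ for each fixed $t \in G$. For the action on $\beta G$ one may even take the $m_i$ to be \emph{finitely supported}, i.e.\ each $m_i$ is a finite sum $\sum_{s \in F_i} g_{i,s}^{1/2}\, \delta_s$-type object whose coefficient functions $g_{i,s}$ lie in $\ell^\infty(G) = C(\beta G)$, with $F_i \subseteq G$ finite; and by a standard argument one can arrange the index set to be $\N$ (exactness is characterised by a sequence, not merely a net, when $G$ is countable — see the discussion around \cite[Theorem 5.1.6]{BroOza}). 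Then I would simply let $M'$ be the countable collection of all the coefficient functions $g_{i,s} \in \ell^\infty(G)$ appearing in this sequence, together with their images under the (countably many) elements of $G$ — this is still a countable subset of $\ell^\infty(G)$.

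Next, given any $G$-invariant sub-\Cs{} $A \subseteq \ell^\infty(G)$ containing $M'$, the restriction maps $\widehat{\ell^\infty(G)} = \beta G \to \widehat A$ are $G$-equivariant and surjective, and — this is the key point — the functions $m_i^{(\cdot)}$ defined on $\beta G$ actually factor through $\widehat A$, because their coefficient functions $g_{i,s}$ lie in $A$. Concretely, a finitely supported $m_i$ with coefficients in $A$ defines a continuous map $\widehat A \to \mathrm{Prob}(G)$, and the norm estimate $\sup_{y} \|t.m_i^y - m_i^{t.y}\|_1 \to 0$ is inherited verbatim (the supremum over $\beta G$ dominates the supremum over the image, but since the maps factor these are equal). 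Hence the same sequence witnesses amenability of the action of $G$ on $\widehat A$. I would conclude by noting that this is exactly the content attributed to Brown and Ozawa, so one can alternatively just cite \cite{BroOza} for the statement in the form given.

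The main obstacle — and the only genuinely non-formal point — is the claim that exactness of the countable group $G$ gives an amenable action on $\beta G$ with \emph{countably many, finitely supported} approximating functions, so that the resulting set $M'$ is countable; without finite support (or at least without a countable approximating family) one would not get a countable $M'$. This is, however, precisely how exactness is packaged in \cite[Theorem 5.1.6]{BroOza} and the surrounding material (exactness $\Leftrightarrow$ property A of $G$ $\Leftrightarrow$ amenability of $G \curvearrowright \beta G$, with the approximating functions on $\beta G = $ bounded functions on $G$ taken to be finitely supported in the $G$-variable and realised as a sequence by countability of $G$). Since the lemma is explicitly credited to Brown--Ozawa, the cleanest route is to extract the finitely-supported sequential approximating family from their proof and then perform the elementary factorisation argument above; I would keep the written proof to a sentence or two, citing \cite{BroOza} for the hard input and spelling out only the factorisation through $\widehat A$.
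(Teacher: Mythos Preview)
Your proposal is correct and follows essentially the same route as the paper: both invoke \cite[Theorem 5.1.6]{BroOza} for amenability of $G \curvearrowright \beta G$, extract the finitely supported approximating functions $T_i \colon G \to \ell^\infty(G)$ from \cite[Definition 4.3.1]{BroOza}, and take $M'$ to be the (countable) set of their values. The paper's proof is terser---it omits the factorisation argument you spell out and does not bother to include the $G$-translates in $M'$ (unnecessary since $A$ is assumed $G$-invariant)---but the substance is identical.
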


\begin{proof} It is well-known (see for example \cite[Theorem
  5.1.6]{BroOza}) that the action of an exact group $G$ on its
  Stone-Cech compactification $\beta G$ induced by left-multiplication
  on $G \subseteq \beta G$ is amenable. Use the definition of amenable
  actions given in \cite[Definition 4.3.1]{BroOza}, and let $T_i
  \colon G \to \ell^\infty(G)$, $i \in \N$, be finitely supported
  positive functions satisfying the conditions (1), (2), and (3) of
  \cite[Definition 4.3.1]{BroOza}. Then $M' = \{T_i(t) \mid i \in \N,
  \; t \in G\}$ is a 
  countable set with the required properties.
\end{proof}

\begin{lemma} \label{prop.inf.sep.algebra}
Let $G$ be a discrete 
  non-amenable group, and let $p$ be a projection in
  $\ell^\infty(G)$ which is properly infinite in $\ell^\infty(G)
  \rtimes_r G$. 
  Then there is a countable subset $M''$ of
  $\ell^\infty(G)$ such that whenever $A$ is a $G$-invariant unital
  sub-$C^*$-algebra of $\ell^\infty(G)$ which contains $\{p\} \cup M''$,
  then $p$ is properly infinite in $A
  \rtimes_{r} G$.
\end{lemma}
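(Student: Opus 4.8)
The plan is to witness proper infiniteness of $p$ by finitely many elements of the crossed product, and then observe that these elements involve only countably many functions from $\ell^\infty(G)$. Since $p$ is properly infinite in $\ell^\infty(G) \rtimes_r G$, there exist partial isometries $v,w$ in $\ell^\infty(G) \rtimes_r G$ with $v^*v = w^*w = p$ and $vv^* + ww^* \le p$. These elements need not lie in $C_c(G,\ell^\infty(G))$, but they can be approximated: by Lemma~\ref{lm:norm-convergence} (or simply by density of $C_c(G,\ell^\infty(G))$ together with a standard perturbation argument for partial isometries with a controlled domain projection), one can find $v',w' \in C_c(G,\ell^\infty(G))$ such that $v'^*v'$ and $w'^*w'$ are close to $p$ and $v'v'^* + w'w'^* \lesssim p$. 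First I would make this precise: pick $v', w' \in C_c(G,\ell^\infty(G))$ with $\|v' - v\|$, $\|w' - w\|$ small; then $p v'^* v' p$ is close to $p$, so by functional calculus there is a partial isometry correction inside $p(\ell^\infty(G)\rtimes_r G)p$ producing honest partial isometries $\tilde v, \tilde w$ with $\tilde v^* \tilde v = \tilde w^* \tilde w = p$ and $\tilde v \tilde v^* + \tilde w \tilde w^* \le p$. The correction is obtained from a norm-convergent series in $p$ and in $v', w'$ and their adjoints, hence lies in the $C^*$-subalgebra of $\ell^\infty(G) \rtimes_r G$ generated by $p$ together with the finitely many coefficients appearing in $v'$ and $w'$ (and their $G$-translates needed to form products) — but that is again only countably many functions.

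Next I would collect the data. The elements $v', w'$ are finite sums $\sum_{t \in F} a_t u_t$ and $\sum_{s \in F'} b_s u_s$ with $F, F' \subseteq G$ finite and $a_t, b_s \in \ell^\infty(G)$. Let $M_0$ be the (finite, hence countable) set of all these coefficients $a_t, b_s$ together with $p$. Let $M''$ be the smallest $G$-invariant countable subset of $\ell^\infty(G)$ containing $M_0$ and closed under the algebraic operations needed — more cleanly, take $M''$ to be any fixed countable set such that the $C^*$-subalgebra it generates together with $\{p\}$ contains all the partial-isometry corrections above; since each correction is a norm limit of polynomials (with $\C$-coefficients, or with coefficients in the rational points of $[0,1]$) in $p, a_t, b_s, u_t, u_s$ and their adjoints, and each such polynomial lives in $C_c(G, C^*(\{p,a_t,b_s : \text{translates}\}))$, countability is preserved. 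Then for any $G$-invariant unital sub-$C^*$-algebra $A$ of $\ell^\infty(G)$ containing $\{p\} \cup M''$, the crossed product $A \rtimes_r G$ contains $\tilde v$ and $\tilde w$ (viewing $A \rtimes_r G \subseteq \ell^\infty(G) \rtimes_r G$ canonically, which is legitimate since $A \hookrightarrow \ell^\infty(G)$ is $G$-equivariant and the conditional expectations are compatible), and the relations $\tilde v^* \tilde v = \tilde w^* \tilde w = p$, $\tilde v\tilde v^* + \tilde w\tilde w^* \le p$ persist. Hence $p$ is properly infinite in $A \rtimes_r G$.

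The main obstacle I anticipate is the perturbation step: upgrading approximate partial isometries in $C_c(G,\ell^\infty(G))$ to exact ones with exactly the right domain projection $p$ and range projections summing to at most $p$, while keeping the correction inside a separable (indeed countably-generated) subalgebra. The cleanest route is to work in the corner $p(\ell^\infty(G)\rtimes_r G)p$: if $v' \in C_c(G,\ell^\infty(G))$ with $\|pv'^*v'p - p\| < \varepsilon$, then $pv'^*v'p$ is invertible in $p(\ell^\infty(G)\rtimes_r G)p$, so $\tilde v := v' p (pv'^*v'p)^{-1/2}$ is a partial isometry with $\tilde v^*\tilde v = p$; the inverse square root is a uniform limit of polynomials in $pv'^*v'p$ with coefficients computable explicitly, so $\tilde v$ lies in $C^*(\{p\} \cup \{a_t, s.a_t\})$-valued $C_c(G,\cdot)$, a countably-generated object. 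A short computation shows $\tilde v\tilde v^* + \tilde w\tilde w^*$ is within $O(\varepsilon)$ of $v'v'^* + w'w'^* $, which in turn is $\precsim p$; shrinking $\varepsilon$ and absorbing the error using that $p$ majorizes a small perturbation of itself (e.g. via $\le$ after one more functional-calculus cutoff, since everything happens inside $pAp$ for $A$ as above) gives $\tilde v\tilde v^* + \tilde w\tilde w^* \le p$. Packaging all the finitely many coefficients and the countably many polynomial approximants into one countable set $M''$ then completes the argument. I expect the bookkeeping to be routine once the corner-perturbation idea is in place; the only subtlety worth stating carefully is that $A \rtimes_r G$ sits inside $\ell^\infty(G) \rtimes_r G$ as a sub-$C^*$-algebra (not merely a quotient), which holds because $\ell^\infty(G)\rtimes_r G$ restricted to the sub-system $(A,G)$ is canonically $A \rtimes_r G$.
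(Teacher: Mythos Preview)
Your overall strategy is sound and your proof can be completed, but you are working much harder than necessary, and the perturbation step you flag as ``the main obstacle'' is genuinely incomplete as written. Your polar-decomposition correction $\tilde v = v'p(pv'^*v'p)^{-1/2}$ only yields $\tilde v^*\tilde v = p$; it does not force $\tilde v\tilde v^* \le p$ (you should first compress $v'$ to $pv'p$), and even after that it does not give $\tilde v\tilde v^* \perp \tilde w\tilde w^*$. The phrase ``absorbing the error using that $p$ majorizes a small perturbation of itself'' does not address this: two projections close to orthogonal ones need not be orthogonal. One can repair this with a further correction such as replacing $\tilde w$ by $(p-\tilde v\tilde v^*)\tilde w$ and re-normalizing, but that is exactly the step you have not written.

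The paper sidesteps the entire perturbation issue with a simpler idea. Rather than taking a \emph{single} approximant and perturbing it back to an exact partial isometry, take a whole sequence: choose $x_n, y_n \in C_c(G,\ell^\infty(G))$ with $\|x_n - x\| < 1/n$, $\|y_n - y\| < 1/n$, let $F_n$ be the (finite) set of Fourier coefficients $E(x_n u_t^*)$, $E(y_n u_t^*)$, and set $M'' = \bigcup_n F_n$. If $A \supseteq \{p\} \cup M''$ is $G$-invariant, then every $x_n, y_n$ lies in $A \rtimes_r G$; since $A \rtimes_r G$ is a closed sub-\Cs{} of $\ell^\infty(G)\rtimes_r G$ (the point you yourself note at the end), the limits $x$ and $y$ themselves lie in $A \rtimes_r G$. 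No perturbation, no functional calculus, no orthogonalization. What your approach would buy, if carried out fully, is a \emph{finite} set $M''$ rather than a countable one---but the lemma only asks for countable, so this extra strength is not needed.
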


\begin{proof}
There exist partial isometries $x$ and $y$ in $\ell^\infty(G)
\rtimes_{r} G$ satisfying $x^*x=y^*y=p$ and $xx^*+yy^* \le p$. 
For each
$n$ choose $x_n,y_n$ in $C_c(G, \ell^\infty(G))$ such that $\|x-x_n\| <
1/n$ and $\|y-y_n\| < 1/n$. Let $F_n$ be the finite subset of
$\ell^\infty(G)$ consisting of all elements $E(x_n u_t^*)$ and $E(y_n
u_t^*)$ with $t \in G$ (only finitely many of these are non-zero by
the assumption that $x_n$ and $y_n$ belong to $C_c(G,
\ell^\infty(G))$). Then $x_n$ and $y_n$ belong to $A
\rtimes_{r} G$ whenever $A$ is a $G$-invariant
sub-$C^*$-algebra of $\ell^\infty(G)$ that contains $F_n$. Put $M'' =
\cup_{n=1}^\infty F_n$. It follows that whenever $A$ is a $G$-invariant
sub-$C^*$-algebra of $\ell^\infty(G)$ that contains $\{p\} \cup M''$, then
$x$ and $y$ belong to $A \rtimes_{r} G$, whence $p$ is properly
infinite in $A \rtimes_{r} G$.
\end{proof}

\begin{lemma} \label{lm:generators}
Let $G$ be a countable discrete group and let $T$ be a countable subset of
$\ell^\infty(G)$. Then there is a countable and $G$-invariant set
$P$ consisting of projections in $\ell^\infty(G)$ such that $T
\subseteq C^*(P)$. 
\end{lemma}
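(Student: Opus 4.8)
The goal is to produce, from an arbitrary countable $T \subseteq \ell^\infty(G)$, a countable $G$-invariant set $P$ of projections in $\ell^\infty(G)$ with $T \subseteq C^*(P)$. The plan is to exploit the fact that $\ell^\infty(G)$ has real rank zero (it is a commutative AW$^*$-algebra, being $C(\beta G)$ with $\beta G$ totally disconnected, and it is even a von Neumann algebra), so that each self-adjoint element is a norm-limit of real-linear combinations of its own spectral projections. First I would reduce to the self-adjoint case: write each $f \in T$ as $\mathrm{Re}(f) + i\,\mathrm{Im}(f)$, so it suffices to treat a countable set of self-adjoint elements. For a single self-adjoint $f$, the spectral projections $e_{f}(\alpha) := 1_{\{x : f(x) > \alpha\}}$, $\alpha \in \Q$, are projections in $\ell^\infty(G)$, and by uniform approximation of the bounded Borel function $f$ by rational step functions built from the sets $\{f > \alpha\}$ (a routine estimate: on a partition of $[\,-\|f\|, \|f\|\,]$ by rationals of mesh $< \varepsilon$, the corresponding step function is within $\varepsilon$ of $f$) we get $f \in C^*(\{e_f(\alpha) : \alpha \in \Q\})$. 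Taking the union of these countable families over all the (countably many) real and imaginary parts of elements of $T$ yields a countable set $P_0$ of projections with $T \subseteq C^*(P_0)$.

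The remaining issue is $G$-invariance. Here I would simply close $P_0$ up under the action: set $P := \{t.p : t \in G,\ p \in P_0\}$. Since $G$ is countable and $P_0$ is countable, $P$ is countable; it is $G$-invariant by construction; each $t.p = u_t p u_t^*$ is again a projection in $\ell^\infty(G)$ (the action is by $^*$-automorphisms); and $P \supseteq P_0$, so $T \subseteq C^*(P_0) \subseteq C^*(P)$. This completes the construction.

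I expect no genuine obstacle; the only point requiring a little care is the approximation step showing $f \in C^*(\{e_f(\alpha)\}_{\alpha \in \Q})$ for self-adjoint $f$, i.e.\ that the $C^*$-algebra generated by the rational spectral projections of $f$ already contains $f$. This is where real rank zero (equivalently, total disconnectedness of $\beta G$, equivalently the fact that $\ell^\infty(G)$ is generated by its projections) is used, and it is standard: one writes $f = \lim_{\varepsilon \to 0} \sum_{k} \alpha_k\big(e_f(\alpha_k) - e_f(\alpha_{k+1})\big)$ over a rational partition $\cdots < \alpha_{k+1} < \alpha_k < \cdots$ of an interval containing the spectrum of $f$, with the differences $e_f(\alpha_k) - e_f(\alpha_{k+1})$ being the characteristic functions of the ``slabs'' $\{\alpha_{k+1} < f \le \alpha_k\}$; each such difference lies in $C^*(\{e_f(\alpha)\}_{\alpha \in \Q})$, and the partial sums converge to $f$ in norm because $f$ takes values in that interval. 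One could alternatively invoke directly that $\ell^\infty(X) = C(\beta X)$ has real rank zero and that any real rank zero commutative $C^*$-algebra is the closed span of its projections; but the explicit spectral-projection description has the advantage of making the countability of the generating set transparent.
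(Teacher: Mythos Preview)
Your proof is correct and follows essentially the same approach as the paper: approximate each element by simple functions (the paper phrases this as ``elements with finite spectrum'', you use step functions built from rational spectral projections), deduce that each element lies in the $C^*$-algebra generated by countably many projections, collect these over the countable set $T$ into a countable $P_0$, and then saturate under the countable group $G$ to obtain the $G$-invariant countable set $P=\bigcup_{t\in G} t.P_0$. The only cosmetic difference is that the paper works directly with finite-spectrum approximants (whose generating projections are the spectral projections of the approximant), while you fix in advance the countable family $\{e_f(\alpha):\alpha\in\Q\}$; both yield the same conclusion.
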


\begin{proof} Each element in $\ell^\infty(G)$ can be approximated in norm
by elements with finite spectrum, and the $C^*$-algebra generated by
any normal element with finite spectrum is equal to the $C^*$-algebra
generated by finitely many projections (as many projections as there
are elements in the spectrum). It follows that each element in
$\ell^\infty(G)$ belongs to the $C^*$-algebra generated by a countable
set of projections in $\ell^\infty(G)$. Hence the countable set $T$ is
contained in the $C^*$-algebra generated by a countable set $P_0$ of
projections from $\ell^\infty(G)$. The set $P=\bigcup_{t \in G}
t.P_0$ has the desired properties. 
\end{proof}

\begin{proposition} \label{Roe-2}
Let $G$ be a countable discrete group, and let 
$N$ be a countable subset of $\ell^\infty(G)$. Then there exists a
separable $G$-invariant sub-\Cs{} $A$ of $\ell^\infty(G)$ which is generated by
projections and contains $N$, 
and which has the following property: for every
projection $p$ in $A$, if $p$ is properly infinite in $\ell^\infty(G)
\rtimes_r G$, then $p$ is properly infinite in $A \rtimes_r G$.
\end{proposition}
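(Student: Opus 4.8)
The plan is to build $A$ as an increasing union of separable $G$-invariant sub-$C^*$-algebras, using a standard back-and-forth (intertwining) argument to close up under two requirements at once: that $A$ contains $N$, and that for every projection $p$ in $A$ that is properly infinite in $\ell^\infty(G) \rtimes_r G$, the partial isometries witnessing this already live in $A \rtimes_r G$. The key tools are Lemma~\ref{prop.inf.sep.algebra} (which, given a fixed properly infinite projection $p$, produces a countable set $M''$ whose inclusion forces $p$ to remain properly infinite after crossing by any $G$-invariant sub-$C^*$-algebra containing $\{p\}\cup M''$) and Lemma~\ref{lm:generators} (which lets us replace any countable subset of $\ell^\infty(G)$ by a countable $G$-invariant set of projections generating a $C^*$-algebra containing it). The main obstacle is that enlarging $A$ to accommodate the witnesses for its current projections creates new projections, which in turn need their own witnesses; this is exactly the kind of situation an inductive-limit construction is designed to handle, provided each stage is only countably large.

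Here is the construction in detail. First I would apply Lemma~\ref{lm:generators} to $N$ to obtain a countable $G$-invariant set $P_1$ of projections in $\ell^\infty(G)$ with $N \subseteq C^*(P_1)$, and set $A_1 = C^*(P_1)$, a separable $G$-invariant sub-$C^*$-algebra of $\ell^\infty(G)$ generated by projections. Since $\ell^\infty(G)$ is an AW$^*$-algebra (indeed a product of copies of $\C$), it has real rank zero, so the set of projections in the separable algebra $A_1$ is separable; choose a countable dense set $\{p_k\}$ among them and discard those $p_k$ that are not properly infinite in $\ell^\infty(G)\rtimes_r G$. For each surviving $p_k$, Lemma~\ref{prop.inf.sep.algebra} supplies a countable set $M_k''$. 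Let $N_2$ be the union of $N$, a countable generating set for $A_1$, and all the $M_k''$; this is countable. Apply Lemma~\ref{lm:generators} to $N_2$ to get a countable $G$-invariant set $P_2$ of projections with $N_2 \subseteq C^*(P_2)$, and set $A_2 = C^*(P_2 \cup P_1)$. Then $A_1 \subseteq A_2$, $A_2$ is separable, $G$-invariant, generated by projections, and every properly infinite (in $\ell^\infty(G)\rtimes_r G$) projection in the countable dense set $\{p_k\}$ of projections of $A_1$ is properly infinite already in $A_2 \rtimes_r G$.

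Iterating this, I obtain an increasing sequence $A_1 \subseteq A_2 \subseteq \cdots$ of separable $G$-invariant sub-$C^*$-algebras of $\ell^\infty(G)$, each generated by projections, with $N \subseteq A_1$, and such that every projection in a fixed countable dense set of projections of $A_n$ that is properly infinite in $\ell^\infty(G)\rtimes_r G$ is properly infinite in $A_{n+1}\rtimes_r G$. Put $A = \overline{\bigcup_n A_n}$. Then $A$ is separable, $G$-invariant, contains $N$, and is generated by projections (as a closed union of algebras generated by projections). It remains to verify the final property. Let $p$ be a projection in $A$ that is properly infinite in $\ell^\infty(G)\rtimes_r G$. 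Using real rank zero of $A$ (inherited from $\ell^\infty(G)$, or directly: $A$ is an increasing union of algebras with real rank zero hence has real rank zero), and the fact that properly infinite projections form an open condition under the relation "there exist partial isometries $x,y$ with $x^*x=y^*y=p$, $xx^*+yy^*\le p$", one can perturb to find, for some $n$, a projection $p'$ in the chosen countable dense set of projections of $A_n$ with $p' $ Murray–von Neumann equivalent to $p$ in $A$ and $p'$ properly infinite in $\ell^\infty(G)\rtimes_r G$ (proper infiniteness is stable under equivalence and under small perturbations of the projection); then $p'$ is properly infinite in $A_{n+1}\rtimes_r G \subseteq A \rtimes_r G$, and hence so is $p$, since proper infiniteness of projections is preserved under Murray–von Neumann equivalence within $A \rtimes_r G$. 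The one point requiring care is this last perturbation step: I would justify it by noting that if $p$ is a projection in $A$ then $p$ lies within distance $<1/4$ of some self-adjoint element of some $A_n$, hence (by functional calculus and real rank zero) within distance $<1/2$ of a projection $p'$ in $A_n$, which is then unitarily equivalent to $p$ in $A$ by a unitary close to $1$; conjugating the witnesses $x,y$ for $p$ by this unitary shows $p'$ is properly infinite in $\ell^\infty(G)\rtimes_r G$, and one further small perturbation places $p'$ in the countable dense set. I expect this perturbation bookkeeping, rather than any conceptual difficulty, to be the fiddliest part of the argument.
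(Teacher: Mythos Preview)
Your proof is correct and follows the same inductive strategy as the paper (build $A$ as an increasing union using Lemmas~\ref{prop.inf.sep.algebra} and~\ref{lm:generators}), but you are working harder than necessary because you never use that $\ell^\infty(G)$, and hence each $A_n$ and $A$ itself, is \emph{commutative}. In a separable commutative \Cs{} $C^*(P_n)$ the spectrum is compact metrizable and projections correspond to clopen sets, so there are only \emph{countably many} projections altogether; the paper simply lets $Q_n$ be the full (countable) set of projections in $C^*(P_n)$ and applies Lemma~\ref{prop.inf.sep.algebra} to every $p\in Q_n$ that is properly infinite in $\ell^\infty(G)\rtimes_r G$. Likewise, in a commutative \Cs{} any two projections at distance less than $1$ are equal, so every projection in $A=\overline{\bigcup_n C^*(P_n)}$ already \emph{lies in} some $C^*(P_n)$; this eliminates your final perturbation bookkeeping entirely. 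One small correction: real rank zero is not inherited by sub-\Cs{}s of $\ell^\infty(G)$ (e.g.\ $C[0,1]$ embeds in $\ell^\infty(\N)$); your alternative justification via the increasing union of abelian algebras generated by projections is the right one.
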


\begin{proof} Let $\cP_\mathrm{inf}$ denote the set of properly infinite
  projections in $\ell^\infty(G) \rtimes_r G$. 
  Use Lemma~\ref{lm:generators} to find a countable
  $G$-invariant set of projections $P_0 \subseteq \ell^\infty(G)$ 
  such that $N \subseteq C^*(P_0)$. Let $Q_0$ be the set of
  projections in $C^*(P_0)$. The set $Q_0$ is countable because
  $C^*(P_0)$ is separable and abelian. For each $p \in Q_0 \cap
  \cP_\mathrm{inf}$ use Lemma~\ref{prop.inf.sep.algebra} to find a
  countable subset 
  $M(p)$ of $\ell^\infty(G)$ such that $p$ is properly infinite in $A
  \rtimes_r G$ whenever $A$ is a $G$-invariant sub-\Cs{} of
  $\ell^\infty(G)$ that contains $\{p\} \cup M(p)$. Put 
$$N_1 = Q_0 \, \cup \, \bigcup_{p \in Q_0 \cap
  \cP_\mathrm{inf}} M(p).$$ 
Use Lemma~\ref{lm:generators} to find a countable
  $G$-invariant set of projections $P_1 \subseteq \ell^\infty(G)$ 
  such that $N_1 \subseteq C^*(P_1)$. 

Continue in this way to find countable subsets $
N_0=N, N_1,N_2, \dots$ of
$\ell^\infty(G)$ and countable $G$-invariant subsets $P_0,P_1,P_2,
\dots$ consisting 
of projections in $\ell^\infty(G)$ such that if $Q_j$ is the
(countable) set of projections in $C^*(P_j)$, then 
$Q_{j} \subseteq N_{j+1}
\subseteq C^*(P_{j+1})$ and every $p \in
Q_{j} \cap \cP_\mathrm{inf}$ is properly infinite in $C^*(P_{j+1})
\rtimes_r G$.

Put $P = \bigcup_{n=0}^\infty P_n$ and put $A = C^*(P)$. Notice that
$$A = \overline{\bigcup_{n=1}^\infty C^*(P_n)}.$$

Then $P$ is a countable $G$-invariant subset of $\ell^\infty(G)$
consisting of projections, and $N \subseteq C^*(P)$. Moreover, if $p$ is a
projection in $A$ which is properly infinite in $\ell^\infty(G)
\rtimes_r G$, then $p$ is equivalent (and hence equal) to a
projection in $C^*(P_n)$ for some $n$, whence $p$ belongs to $Q_n \cap
\cP_{\mathrm{inf}}$, which by construction implies that $p$
 is properly infinite
in $C^*(P_{n+1}) \rtimes_r G$ and hence also in $A \rtimes_r G$.
\end{proof}

An element in a
\Cs{} is said to be \emph{full} if it is not contained in a proper
ideal of the \Cs.

\begin{lemma} \label{Roe-3} Let $A$ be as in
  Proposition~\ref{Roe-2}, and suppose that the group $G$ is
  non-amenable. Then every projection in $A$, which is full in $A
  \rtimes_r G$, is properly
  infinite in $A
  \rtimes_r G$. 
\end{lemma}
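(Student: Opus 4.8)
The plan is to reduce to Corollary~\ref{cor:Roe} via the special structure of $A$ and of the crossed product. First I would observe that the reduced crossed product $A \rtimes_r G$ sits inside $\ell^\infty(G) \rtimes_r G$: indeed the inclusion $A \hookrightarrow \ell^\infty(G)$ is $G$-equivariant and unital, and since $G$ is exact (wait — here I only know $G$ non-amenable; but in fact the ambient statement only needs the inclusion at the level of the spatially defined reduced crossed products, which is always injective because the canonical conditional expectations are compatible, i.e.\ the restriction to $A \rtimes_r G$ of $E_{\ell^\infty(G)\rtimes_r G}$ is $E_{A\rtimes_r G}$, and faithfulness of the latter forces injectivity of the inclusion $A \rtimes_r G \hookrightarrow \ell^\infty(G) \rtimes_r G$). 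So fix a projection $p$ in $A$ that is full in $A \rtimes_r G$.

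Next I would show $p$ is \emph{also} full in $\ell^\infty(G) \rtimes_r G$, or rather dominates the unit up to finitely many copies there. Fullness of $p$ in $A \rtimes_r G$ means the closed two-sided ideal it generates is all of $A \rtimes_r G$, which (since $A \rtimes_r G$ is unital) means $1 = \sum_{i=1}^{k} a_i p b_i$ for some $a_i, b_i \in A \rtimes_r G$; a standard averaging/polar-decomposition argument then gives a natural number $n$ with $1 \precsim p \oplus p \oplus \cdots \oplus p$ ($n$ copies) in $M_n(A \rtimes_r G)$. Applying the inclusion $M_n(A \rtimes_r G) \hookrightarrow M_n(\ell^\infty(G) \rtimes_r G)$, the same Cuntz domination holds in $M_n(\ell^\infty(G) \rtimes_r G)$; since $G$ is non-amenable, $1 = 1_G$ is properly infinite in $\ell^\infty(G) \rtimes_r G$ by Proposition~\ref{Roe}, so exactly the computation in the proof of Corollary~\ref{cor:Roe} shows $p \oplus p \oplus \cdots \oplus p$ is properly infinite in $M_n(\ell^\infty(G) \rtimes_r G)$, hence (again by Proposition~\ref{Roe}, equivalence of (iii) and (iv)) $p$ is properly infinite in $\ell^\infty(G) \rtimes_r G$ itself.

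Finally I would invoke the defining property of $A$ from Proposition~\ref{Roe-2}: $p$ is a projection in $A$ that is properly infinite in $\ell^\infty(G) \rtimes_r G$, so by the conclusion of that proposition $p$ is properly infinite in $A \rtimes_r G$, which is what we wanted.

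The main obstacle I anticipate is the bookkeeping in the second paragraph: passing from ``full'' to the concrete domination $1 \precsim p^{\oplus n}$ and then transporting it cleanly along the inclusion $A \rtimes_r G \hookrightarrow \ell^\infty(G) \rtimes_r G$ while keeping track of matrix amplifications. Everything else is a direct citation — Proposition~\ref{Roe} for the $\ell^\infty(G)$-side equivalences, the arithmetic of Corollary~\ref{cor:Roe}, and Proposition~\ref{Roe-2} for the descent back to $A \rtimes_r G$ — so the only real content is making sure the inclusion of reduced crossed products is the honest one and that fullness really does yield a finite-stage domination of the unit.
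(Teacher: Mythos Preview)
Your proposal is correct and follows essentially the same route as the paper: pass from fullness in $A \rtimes_r G$ to the ambient Roe algebra, invoke Corollary~\ref{cor:Roe} to get proper infiniteness in $\ell^\infty(G) \rtimes_r G$, then descend via Proposition~\ref{Roe-2}. The paper's version is slightly more streamlined in that it simply observes that fullness is preserved under the unital inclusion $A \rtimes_r G \hookrightarrow \ell^\infty(G) \rtimes_r G$ and then cites Corollary~\ref{cor:Roe} directly, whereas you effectively reprove that corollary inline; but the content is the same.
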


\begin{proof} If $p \in A \subseteq \ell^\infty(G)$ is a projection which is full in $A \rtimes_r G$, then $p$ is full in $\ell^\infty(G) \rtimes_r G$, which by Corollary \ref{cor:Roe} implies that $p$ is properly infinite in $\ell^\infty(G) \rtimes_r G$, and hence also properly infinite in $A \rtimes_r G$ by construction of $A$.
\end{proof}

 Let $G$ be a countable discrete group acting on a compact Hausdorff
 space $X$. An open subset $U$ of $X$ is said to be \emph{$G$-full} in
 $X$ if $\bigcup_{t \in G} t.U = X$. If $U$ is a clopen subset of $X$,
 then $U$ is $G$-full in $X$ if and only if $1_U$ is full in $C(X)
 \rtimes_r G$. 

 \begin{lemma} \label{lm:G-full}
 Let $G$ be a countable discrete group acting on a compact, metrizable,
 totally disconnected Hausdorff space $Y$. Let $X$ be a closed $G$-invariant
 subset of $Y$, and let $U$ be a subset of $X$ which is clopen and
 $G$-full relative to $X$. Then there exists a clopen $G$-full subset
 $V$ of $Y$ such that $V \cap X = U$.
 \end{lemma}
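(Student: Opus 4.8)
The plan is to extend the clopen set $U \subseteq X$ to a clopen subset of $Y$ by first extending it to an arbitrary clopen set and then enlarging it using the $G$-fullness hypothesis, being careful to keep the extension clopen and to control its intersection with $X$.

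First I would use the fact that $Y$ is compact, metrizable, and totally disconnected, so $Y$ has a countable basis of clopen sets, and the subspace $X$ inherits the subspace topology. Since $U$ is clopen in $X$, both $U$ and $X \setminus U$ are compact; by standard separation in the totally disconnected space $Y$ (clopen sets separate disjoint compact sets), there is a clopen subset $W_0 \subseteq Y$ with $W_0 \cap X = U$. This gives an initial extension, but $W_0$ need not be $G$-full in $Y$.

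Next I would use $G$-fullness of $U$ in $X$, i.e.\ $\bigcup_{t\in G} t.U = X$, to repair fullness. The complement $Y \setminus \bigcup_{t \in G} t.W_0$ is a subset of $Y$ disjoint from $X$ (since already $\bigcup_t t.W_0 \supseteq \bigcup_t t.U = X$). The issue is that this complement is merely open, not clopen, and $G$ may be infinite so $\bigcup_{t \in G} t.W_0$ is only an open (typically non-clopen) set. The key step, and the main obstacle, is to produce a clopen $G$-full superset $V \supseteq W_0$ with $V \cap X = W_0 \cap X = U$ still holding. I would argue as follows: the closed set $Z := Y \setminus \bigcup_{t\in G} t.W_0$ is disjoint from the closed set $X$, so by compactness and total disconnectedness of $Y$ there is a clopen set $C$ with $Z \subseteq C$ and $C \cap X = \emptyset$. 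Then $C$ is covered by $\{t.W_0^c\}$? No --- instead, note $C$ is compact and for each $y \in C$ there is $t_y \in G$ with $y \in t_y.W_0$ only if $y \notin Z$; rather, I would cover the clopen set $Y \setminus C \supseteq \bigcup_t t.W_0$'s complement issue directly. Cleanest: since $Z$ is compact and $Z \cap X = \emptyset$, choose clopen $C \supseteq Z$ with $C \cap X = \emptyset$; now $Y \setminus C$ is a clopen set containing $X$ and contained in $\bigcup_{t\in G} t.W_0$, hence by compactness of $Y \setminus C$ there are finitely many $t_1,\dots,t_k \in G$ with $Y \setminus C \subseteq \bigcup_{i=1}^k t_i.W_0$. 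Now set
$$V = W_0 \cup \bigcup_{i=1}^k \big(t_i.W_0 \setminus C\big) \cdot(\text{pulled back})$$
--- more precisely, I want to add to $W_0$ pieces that make it $G$-full without touching $X$. Put $V = W_0 \cup \bigl( C \cap t_1^{-1}.(Y\setminus C)\bigr) \cup \cdots$; to avoid this getting baroque, the honest move is: let $V = W_0 \cup (C \cap D)$ where $D$ is a clopen set chosen so that $\bigcup_t t.(C\cap D) \supseteq C$ and $C \cap D \cap X = \emptyset$ (automatic since $C \cap X = \emptyset$). Since $C$ is a clopen $G$-full-able piece disjoint from $X$, one shows such $D$ exists: $C \subseteq Y$ is covered by $\{t.W_0 : t\}$ up to $C$ itself, iterate.

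Concretely, I would finish by taking $V = W_0 \cup \bigcup_{i=1}^k (t_i^{-1}.(Y \setminus C) \cap C)$. This is clopen (finite union of clopen sets), satisfies $V \cap X = (W_0 \cap X) \cup \emptyset = U$ because every added piece lies in $C$ which is disjoint from $X$, and is $G$-full because $\bigcup_{t\in G} t.V \supseteq \bigcup_{i} t_i.V \supseteq \bigcup_i \bigl(t_i.W_0 \cup ((Y\setminus C)\cap t_i.C)\bigr) \supseteq (Y \setminus C) \cup \bigl((Y\setminus C) \cap \bigcup_i t_i.C\bigr)$, and since $\bigcup_i t_i.C \supseteq t_1.Z \cup \cdots$ hmm --- here I need $\bigcup_i t_i.C \supseteq C$, which fails in general. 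The genuine fix: replace the finite selection step by covering $C$ itself. Since $Z \subseteq \bigcup_t t.W_0$ is false but $C$ might meet $\bigcup_t t.W_0$; instead note $Z = Y \setminus \bigcup_t t.W_0$ and so $C \setminus Z \subseteq \bigcup_t t.W_0$, i.e. every point of $C$ either lies in $Z$ or is in some $t.W_0$. For points in $Z$, we need them covered by translates of $V$; so $V$ must itself meet $\bigcup_t t^{-1}.Z$. Choose clopen $C' \supseteq Z$ with $C' \cap X = \emptyset$ and small enough that $C'$, being compact, is covered by finitely many $t_i.C'$ translates hitting all of $Z$ --- this requires the $G$-action restricted to the orbit of $Z$ to be "full on $C'$", which again is the crux.

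The main obstacle, then, is exactly this: engineering a clopen $C' \supseteq Z$, disjoint from $X$, such that $\bigcup_{t\in G} t.C' \supseteq Z$ in a way witnessed by finitely many translates --- but this is immediate since $Z \subseteq C'$ gives $\bigcup_t t.C' \supseteq e.C' = C' \supseteq Z$ trivially with the single translate $t=e$. So in fact $V = W_0 \cup C'$ works: it is clopen, $V \cap X = U \cup \emptyset = U$, and $\bigcup_{t} t.V \supseteq \bigcup_t t.W_0 \cup C' = (Y \setminus Z) \cup C' \supseteq (Y\setminus Z)\cup Z = Y$, using $Z \subseteq C'$. Thus the construction is: (1) get clopen $W_0$ with $W_0 \cap X = U$; (2) set $Z = Y \setminus \bigcup_t t.W_0$, a closed set disjoint from $X$; (3) by total disconnectedness and compactness find clopen $C' \supseteq Z$ with $C' \cap X = \emptyset$; (4) let $V = W_0 \cup C'$ and verify clopenness, $V \cap X = U$, and $G$-fullness as above. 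The only subtlety worth spelling out in the writeup is step (3) --- separating the compact set $Z$ from the compact set $X$ by a clopen set in the compact totally disconnected metrizable space $Y$ --- and step (2), that $\bigcup_t t.W_0$ is open so $Z$ is closed and $X \subseteq \bigcup_t t.W_0$ gives $Z \cap X = \emptyset$.

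\begin{proof}
Since $Y$ is compact, metrizable, and totally disconnected, it has a basis of clopen sets, and any two disjoint closed subsets of $Y$ can be separated by a clopen set.

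Because $U$ is clopen in $X$, both $U$ and $X \setminus U$ are closed, hence compact, subsets of $Y$. Separating them by a clopen subset of $Y$, we obtain a clopen set $W_0 \subseteq Y$ with $W_0 \cap X = U$.

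Let $Z = Y \setminus \bigcup_{t \in G} t.W_0$. Since each $t.W_0$ is clopen, the union $\bigcup_{t\in G} t.W_0$ is open, so $Z$ is closed in $Y$. As $U$ is $G$-full relative to $X$ we have $X = \bigcup_{t\in G} t.U \subseteq \bigcup_{t\in G} t.W_0$, and therefore $Z \cap X = \emptyset$. Separating the disjoint closed sets $Z$ and $X$ by a clopen set, we obtain a clopen set $C \subseteq Y$ with $Z \subseteq C$ and $C \cap X = \emptyset$.

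Put $V = W_0 \cup C$. Then $V$ is clopen, being a union of two clopen sets. We have
$$V \cap X = (W_0 \cap X) \cup (C \cap X) = U \cup \emptyset = U.$$
Finally, for $G$-fullness, using that $t=e$ contributes $C$ and that $Z \subseteq C$,
$$\bigcup_{t \in G} t.V \, \supseteq \, \Big(\bigcup_{t\in G} t.W_0\Big) \cup C \, = \, (Y \setminus Z) \cup C \, \supseteq \, (Y \setminus Z) \cup Z \, = \, Y.$$
Hence $V$ is a clopen $G$-full subset of $Y$ with $V \cap X = U$.
\end{proof}
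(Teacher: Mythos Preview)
Your final proof is correct, and it takes a different route from the paper's argument. The paper first fixes finitely many translates $t_0=e,t_1,\dots,t_k$ with $\bigcup_j t_j.U = X$, then exhausts the open set $Y\setminus X$ by an increasing sequence of clopen sets $Z_n$, sets $V_n = V_0 \cup Z_n$, and uses compactness of $Y$ applied to the increasing clopen sets $R_n = \bigcup_{j=0}^k t_j.V_n$ to find an $n$ with $R_n = Y$. Your argument instead passes to the full $G$-saturation $\bigcup_{t\in G} t.W_0$, identifies the closed ``uncovered'' set $Z$, separates it from $X$ by a single clopen $C$, and takes $V = W_0 \cup C$; $G$-fullness of $V$ then follows in one line from $Z \subseteq C = e.C$.

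Your approach is arguably cleaner: it avoids the exhaustion step and the finite-cover compactness argument, and in fact does not use metrizability of $Y$ at all (separation of disjoint closed sets by clopen sets holds in any compact totally disconnected Hausdorff space), whereas the paper's choice of a countable increasing clopen exhaustion of $Y\setminus X$ does rely on second countability. The paper's approach, on the other hand, has the minor conceptual advantage that it exhibits $G$-fullness of $V$ via a \emph{fixed finite} set of translates $t_0,\dots,t_k$ (the same ones witnessing $G$-fullness of $U$ in $X$), which is sometimes useful downstream. For the purposes of this lemma, though, your proof is a perfectly good (and slightly more general) substitute. One cosmetic remark: the long exploratory ``plan'' preceding your proof meanders through several false starts before landing on the right construction; in a final write-up you should excise that and keep only the clean proof you give in the \texttt{proof} environment.
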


 \begin{proof} Let $V_0$ be any clopen subset of $Y$ such that $V_0
   \cap X = U$. Let $(Z_n)$ be an increasing sequence of clopen subsets
   of $Y$ such that $\bigcup_{n=1}^\infty Z_n = Y \setminus X$, and put
   $V_n = V_0 \cup Z_n$. Then each $V_n$ is clopen and $V_n \cap X =
   U$. 

 Since $U$ is a relatively open and $G$-full subset of the compact set
 $X$ there are finitely many elements $t_1,t_2, \dots, t_k \in G$ such
 that $\bigcup_{j=1}^k t_j.U = X$. Put $t_0 = e$ and put $R_n =
 \bigcup_{j=0}^k t_j.V_n$. Then each $R_n$ is clopen and
 $\bigcup_{n=1}^\infty R_n = Y$. By compactness of $Y$ there is $n_0$
 such that $R_{n_0} = Y$. Hence $V = V_{n_0}$ is $G$-full in $Y$, and
 we have already noted that $V$ is clopen and $V \cap X = U$.
 \end{proof}

\noindent A compact Hausdorff space $X$ is homeomorphic to the Cantor
set if and only if it is separable, 
metrizable, totally disconnected, and without
isolated points. Equivalently, the spectrum, $\widehat{A}$, of an abelian
unital $C^*$-algebra $A$  is the Cantor set if and only if $A$ is
separable, is generated as a $C^*$-algebra by its projections, and has
no minimal projections. 

\begin{theorem} \label{thm:crossed-products}
Let $G$ be a countable discrete group. Then $G$ admits a free,
amenable, minimal action on the Cantor set $X$ such that $C(X) \rtimes_r
G$ is a Kirchberg algebra\footnote{A Kirchberg algebra is a simple,
  separable, nuclear, purely infinite \Cs.} in the UCT class if and only if
$G$ is exact and non-amenable.
\end{theorem}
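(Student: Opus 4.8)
The plan is to prove the two directions of the biconditional separately. The \emph{necessity} direction (existence of the claimed action $\Rightarrow$ $G$ exact and non-amenable) is the easy one: if $C(X) \rtimes_r G$ is a Kirchberg algebra then in particular it is simple and nuclear, and since $C(X)$ is unital and abelian, nuclearity of $C(X) \rtimes_r G$ forces the action to be amenable, hence $G$ is exact (a group admitting an amenable action on a compact Hausdorff space is exact). Moreover a Kirchberg algebra is traceless, so $C(X) \rtimes_r G$ carries no tracial state; if $G$ were amenable the action would preserve a Borel probability measure only if such a measure existed, but more directly: amenability of $G$ would make the action amenable and the crossed product would be the full crossed product, and one can produce an invariant state $C(X) \rtimes_r G \to \C$ from an invariant mean on $G$ composed with the conditional expectation (or simply note that an amenable group acting on a compact space always admits an invariant probability measure, giving a trace), contradicting tracelessness. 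So $G$ must be non-amenable.

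For the \emph{sufficiency} direction, assume $G$ is countable, discrete, exact, and non-amenable. First I would reduce, via $G$-paradoxicality of $G$ itself (non-amenability $\Rightarrow$ $G$-paradoxical), to the situation of Proposition~\ref{Roe}: the unit $1 = 1_G$ of $\ell^\infty(G) \rtimes_r G$ is properly infinite. Then I would invoke the preparatory lemmas to build a single separable $G$-invariant sub-$C^*$-algebra $A \subseteq \ell^\infty(G)$ simultaneously satisfying four requirements: (a) $A$ contains the set $M$ of Lemma~\ref{lm-free4}, so the $G$-action on $\widehat A$ is free; (b) $A$ contains the set $M'$ of Lemma~\ref{lm-amenable}, so the $G$-action on $\widehat A$ is amenable (this uses exactness of $G$); (c) $A$ is generated by projections and has no minimal projections, so $\widehat A$ is the Cantor set (here one must throw in extra projections to kill minimal projections, e.g.\ by a bookkeeping argument that at each stage splits every projection); and (d) $A$ has the "proper-infiniteness persists" property of Proposition~\ref{Roe-2}. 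The way to do this is to run the iterative construction of Proposition~\ref{Roe-2} starting from the countable set $N = M \cup M' \cup (\text{a countable set of projections whose} \ C^*\text{-algebra has no minimal projections})$; amenability and freeness of the action pass to $\widehat A$ because both $M$ and $M'$ are contained in $A$ and these properties are "monotone downward" in the sense of Lemmas~\ref{lm-free4} and \ref{lm-amenable}.

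Having constructed $A$ with $\widehat A = X$ the Cantor set, I would then verify the four conclusions. Amenability of the $G$-action on $X$ gives regularity ($C(X) \rtimes G = C(X) \rtimes_r G$) and, since $C(X)$ is nuclear, nuclearity of the crossed product by \cite[Theorem 4.3.4]{BroOza}; separability is clear. For \emph{simplicity}: freeness (hence essential freeness and minimality-is-not-yet-guaranteed) — actually simplicity requires minimality, which the action of $G$ on $\widehat{C^*(P)}$ need not have a priori, so the honest route is to pass to a minimal closed $G$-invariant subset. Concretely, I would choose a minimal closed $G$-invariant subset $X' \subseteq X$ (exists by Zorn); $C(X')$ is a quotient of $C(X)$ by a $G$-invariant ideal, $X'$ is still a compact metrizable totally disconnected space, and by a small argument (or by arranging it in the construction) $X'$ has no isolated points, so $X'$ is again the Cantor set; the action on $X'$ is still free and amenable, and now minimal. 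Then for \emph{pure infiniteness} of $C(X') \rtimes_r G$ I would apply Theorem~\ref{main.theorem.abelian}: it suffices that every non-zero projection $1_U$ in $C(X')$ ($U$ clopen nonempty) is properly infinite in $C(X') \rtimes_r G$. By minimality every such $U$ is $G$-full in $X'$, and using Lemma~\ref{lm:G-full} I can enlarge $U$ to a $G$-full clopen subset $V$ of $X$ with $V \cap X' = U$; by property (d) of $A$ together with Lemma~\ref{Roe-3}, $1_V$ is properly infinite in $A \rtimes_r G = C(X) \rtimes_r G$, and proper infiniteness of $1_V$ passes to its image $1_U$ under the quotient map $C(X) \rtimes_r G \to C(X') \rtimes_r G$. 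Hence $C(X') \rtimes_r G$ is purely infinite, and being also simple, separable, and nuclear, it is a Kirchberg algebra; the UCT holds because it arises as a crossed product of a commutative (hence type I, hence UCT) $C^*$-algebra by an amenable action (the Tu / Higson–Kasparov circle of ideas, or because amenable actions give a gamma-element / the crossed product is $KK$-equivalent to a commutative algebra). The main obstacle, and the step requiring the most care, is the simultaneous construction of $A$ meeting all four demands — in particular reconciling "no minimal projections in $A$" (needed for $\widehat A$ to be the Cantor set) and "no isolated points in the minimal subset $X'$" with the Proposition~\ref{Roe-2} iteration, and making sure the minimal-subset passage does not destroy Cantor-ness; the other ingredients are essentially bookkeeping with the cited lemmas.
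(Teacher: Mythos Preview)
Your proposal is essentially correct and follows the same overall strategy as the paper. The one place where you diverge --- and where you flag the ``main obstacle'' --- is a step you do not actually need: there is no reason to arrange that $\widehat{A}$ itself be the Cantor set (your condition (c), ``no minimal projections in $A$''). The paper simply takes $A$ to be any separable $G$-invariant sub-\Cs{} of $\ell^\infty(G)$ generated by projections, containing $M \cup M' \cup \{1\}$, and satisfying the conclusion of Proposition~\ref{Roe-2} (your (a), (b), (d)); its spectrum $Y = \widehat{A}$ is then only known to be compact, metrizable, and totally disconnected, and may well have isolated points. After passing to a minimal closed $G$-invariant subset $X \subseteq Y$ (equivalently --- and this is how the paper phrases it --- dividing $A \rtimes_r G$ by a maximal proper ideal, using that $A$ separates ideals), the ``small argument'' you allude to for showing $X$ has no isolated points is simply this: if $x_0 \in X$ were isolated, then $G.x_0$ would be open and $G$-invariant, hence equal to $X$ by minimality; compactness of $X$ would then force $X$, and therefore (by freeness of the action) $G$, to be finite --- contradicting non-amenability of $G$. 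With this observation your worry about reconciling (c) with the Proposition~\ref{Roe-2} iteration evaporates, and the remainder of your outline (Lemma~\ref{lm:G-full} to lift full clopen sets, Lemma~\ref{Roe-3} for proper infiniteness, Theorem~\ref{main.theorem.abelian} for pure infiniteness, Tu's theorem for UCT) matches the paper exactly.
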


\noindent
We remind the reader that the full and the reduced crossed product
coincides for amenable actions, see e.g.\ \cite[Theorem 4.3.4]{BroOza},
so it does not matter which crossed product we choose in the theorem above.

\begin{proof} The ``only if'' part is well-known: If $G$ is an
  amenable group acting on a unital $C^*$-algebra $A$ that admits a
  tracial state, then the crossed product $A \rtimes G$ also admits a
  tracial state (and hence is not purely infinite). The
  $C^*$-algebra  $C^*_r(G)$ can be embedded into $C(X)
  \rtimes_{r} G$ and is therefore exact if  $C(X)
  \rtimes_{r} G$ is nuclear. Hence $G$ is exact.

The ``if'' part: 
Let $M$ and $M'$ be as in Lemmas~\ref{lm-free4} and \ref{lm-amenable},
respectively. By Proposition~\ref{Roe-2} and Lemma~\ref{Roe-3} there
is a separable 
$G$-invariant sub-\Cs{} $A$ of $\ell^\infty(G)$, which is generated by
projections, which contains $M \cup M' \cup \{1\}$, and which moreover
has the property that every projection in $A$ that is full in $A
\rtimes_r G$ is properly infinite in $A \rtimes_r G$. It follows from
Lemmas~\ref{lm-free4} and \ref{lm-amenable} that the action of $G$ on
$A$ is amenable and the action of $G$ on $Y:= \widehat{A}$ is free. 

The set $Y$ is compact, Hausdorff, metrizable and totally disconnected
(the latter because $A$ is generated by projections). 
The action of $G$ on $Y$ is free and amenable. However,
$Y$ may have isolated points, the action of $G$ on $Y$ may not be
minimal, and $A \rtimes_r G$ need not be purely infinite.

It follows from \cite[Theorem 1.16]{Sier} that the correspondence $I
\mapsto I 
\cap A$ from the set of (closed two-sided) ideals in  $A
\rtimes_{r} G$ to the set of $G$-invariant ideals in $A$ is
bijective. Let $I$ be a maximal proper ideal in $A
\rtimes_{r} G$ and put $J= A \cap I$. Then
$$\big(A \rtimes_{r} G \big)/I \cong (A/J) \rtimes_{r} G
\cong C(X) \rtimes_r G,$$
where $X = \widehat{A/J}$. We can identify $X$ with a closed
$G$-invariant subset of $Y$, and the action of $G$ on $X$ is the
restriction to $X$ of the action of $G$ on $Y$.  

As $G$ acts freely and amenably on $Y$ it also acts freely and
amenably on $X$. (To see that $G$ acts amenably on $X$, or on $A/J =
C(X)$, use the characterization of amenable actions given in
\cite[Definition 4.3.1]{BroOza}. If $T_i \colon G \to A$ satisfy (1),
(2), and (3) in that definition, then $\widetilde{T}_i \colon G \to
A/J$ also satisfy these conditions, when $\widetilde{T}_i = \pi \circ
T_i$ and $\pi \colon A \to A/J$ is the quotient mapping.)

We have now established a free and amenable action of $G$ on a separable
metrizable compact Hausdorff space $X$, and $C(X)=A/J$ is generated by its
projections (because $A$ is generated by its projections). The crossed
product $C(X) \rtimes_r G$ 
is simple by construction which entails that $G$
acts minimally on $X$.  

We show next that $X$ has no isolated points (which will
imply that $X$ is the Cantor set). Assume that $x_0$ is an isolated
point in $X$. Then $G.x_0$ is an open and $G$-invariant subset of
$X$. By minimality of the action of $G$ on $X$ this forces $X =
G.x_0$. As $X$ also is compact this can only happen if $X$ is finite,
which would entail that $G$ is finite ($G$ acts freely), but $G$ is
non-amenable. 

Each non-zero projection in $C(X)$ is full in $C(X) \rtimes_r G$ (by 
simplicity) and hence lifts to a projection in $A$ which is full in $A
\rtimes_r G$ by Lemma~\ref{lm:G-full}. The lifted projection is properly
infinite in $A \rtimes_r G$ by construction (Lemma~\ref{Roe-3}), whence the
given projection in $C(X)$ is properly infinite in $C(X) \rtimes_r
G$. Theorem~\ref{main.theorem.abelian} now implies that $C(X)
\rtimes_r G$ is purely infinite.

The crossed product $C(X) \rtimes_r G$ is the reduced \Cs{} of the 
amenable groupoid $X \rtimes G$, and
those have been proved to belong to the UCT class by Tu in \cite{Tu}.
\end{proof}

We have
not calculated the $K$-theory of the crossed product \Cs{} in
Theorem~\ref{thm:crossed-products}. It ought to
depend on which separable sub-\Cs{} $A$ of $\ell^\infty(G)$
we choose and of which maximal ideal in $A \rtimes_r G$ we divide out
by in the construction. It seems plausible that one should be able to
design the construction such that the $K$-theory of the crossed
product in Theorem~\ref{thm:crossed-products} becomes trivial, in
which case it would be 
isomorphic to $\cO_2$ by the Kirchberg--Phillips classification
theorem.

\bibliographystyle{amsplain}
\providecommand{\bysame}{\leavevmode\hbox to3em{\hrulefill}\thinspace}
\providecommand{\MR}{\relax\ifhmode\unskip\space\fi MR }
\providecommand{\MRhref}[2]{%
  \href{http://www.ams.org/mathscinet-getitem?mr=#1}{#2}
}
\providecommand{\href}[2]{#2}

\end{document}